\title{Mutation of frozen Jacobian algebras}
\author{Matthew Pressland}
\address{Matthew Pressland\\Institut für Algebra und Zahlentheorie\\Universität Stuttgart\\Pfaffenwaldring 57\\70569 Stuttgart\\Germany}
\email{presslmw@mathematik.uni-stuttgart.de}
\subjclass[2010]{16G20, 16S38, 18E10, 18E30}
\keywords{quiver with potential, Jacobian algebra, mutation, Frobenius category, dimer model}
\date{\today} 
\newcommand{\myitem}[1][]{%
\item[#1]\protected@edef\@currentlabel{#1}\ignorespaces%
}
\tt \mkbibbrackets{\thefield{eprintclass}}}}}}}
\newcommand{\strandcolor}{red}
\newcommand{\graphcolor}{black}
\newcommand{\quivcolor}{black}
\newcommand{\frozcolor}{black}
\newcommand{\bdrycolor}{gray}
\tikzset{strand/.style={\strandcolor,dashed},
 boundary/.style={\bdrycolor, ultra thick}}
\tikzset{bipedge/.style={\graphcolor}}
\tikzset{quivarrow/.style={\quivcolor, -angle 90}}
\tikzset{frozarrow/.style={\frozcolor, -angle 90, dashed}}
\newcommand{\bstart}{130} 
\newcommand{\fifth}{72} 
\newcommand{\dotrad}{1.3pt} 
\Crefname{eg}{Example}{Examples}
\Crefname{thm}{Theorem}{Theorems}
\Crefname{conj}{Conjecture}{Conjectures}
\Crefname{prop}{Proposition}{Propositions}
\Crefname{lem}{Lemma}{Lemmas}
\Crefname{defn}{Definition}{Definitions}
\Crefname{thm*}{Theorem}{Theorems}
\Crefname{conj*}{Conjecture}{Conjectures}
\Crefname{ques}{Question}{Questions}
\Crefname{rem}{Remark}{Remarks}
\numberwithin{equation}{section}
\theoremstyle{plain}
\newtheorem{thm}{Theorem}[section]
\newtheorem{thm*}{Theorem}
\newtheorem{lem}[thm]{Lemma}
\newtheorem{cor}[thm]{Corollary}
\newtheorem{prop}[thm]{Proposition}
\theoremstyle{definition}
\newtheorem{defn}[thm]{Definition}
\newtheorem{eg}[thm]{Example}
\newtheorem{rem}[thm]{Remark}
\theoremstyle{remark}
\newcommand{\NN}{\mathbb{N}}
\newcommand{\CC}{\mathbb{C}}
\newcommand{\KK}{\mathbb{K}}
\newcommand{\cat}{\mathcal{C}}
\newcommand{\subcat}{\mathcal{D}}
\newcommand{\frobcat}{\mathcal{E}}
\DeclareMathOperator{\Sub}{Sub}
\DeclareMathOperator{\CM}{CM}
\DeclareMathOperator{\fgmod}{mod}
\DeclareMathOperator{\proj}{proj}
\DeclareMathOperator{\Ext}{Ext}
\DeclareMathOperator{\Hom}{Hom}
\DeclareMathOperator{\stabEnd}{\underline{End}}
\DeclareMathOperator{\End}{End}
\DeclareMathOperator{\add}{add}
\DeclareMathOperator{\GP}{GP}
\DeclareMathOperator{\gldim}{gl.dim}
\DeclareMathOperator{\catrad}{Rad}
\DeclareMathOperator{\Tr}{Tr}
\DeclareMathOperator{\Def}{Def}
\DeclareMathOperator{\Char}{char}
\newcommand{\cpa}[2]{#1\langle\hspace{-0.1em}\langle #2\rangle\hspace{-0.1em}\rangle}
\newcommand{\compgen}[2]{#1\langle\hspace{-0.1em}\langle #2\rangle\hspace{-0.1em}\rangle}
\newcommand{\FZ}{\mathrm{FZ}}
\newcommand{\op}{\mathrm{op}}
\newcommand{\red}{\mathrm{red}}
\newcommand{\triv}{\mathrm{triv}}
\newcommand{\iso}{\cong}
\newcommand{\isoto}{\stackrel{\sim}{\to}}
\newcommand{\tens}{\mathbin{\otimes}}
\newcommand{\comptens}{\mathbin{\widehat{\otimes}}}
\newcommand{\dsum}{\mathbin{\oplus}}
\newcommand{\bigdsum}{\bigoplus}
\newcommand{\union}{\cup}
\newcommand{\id}[1]{\mathrm{id}_{#1}}
\newcommand{\powser}[2]{#1[\hspace{-0.1em}[#2]\hspace{-0.1em}]}
\newcommand{\Endalg}[2]{\End_{#1}(#2)^{\op}}
\newcommand{\stabEndalg}[2]{\stabEnd_{#1}(#2)^{\op}}
\newcommand{\head}[1]{h#1}
\newcommand{\tail}[1]{t#1}
\newcommand{\jac}[2]{\mathcal{J}(#1,#2)}
\newcommand{\frjac}[3]{\mathcal{J}(#1,#2,#3)}
\newcommand{\lift}[1]{\widetilde{#1}}
\newcommand{\Kdual}{\mathrm{D}}
\newcommand{\stab}[1]{\underline{#1}}
\newcommand{\set}[1]{\left\{#1\right\}}
\newcommand{\Span}[1]{\langle#1\rangle}
\newcommand{\der}[1]{\partial_{#1}}
\newcommand{\leftder}[1]{\partial^l_{#1}}
\newcommand{\rightder}[1]{\partial^r_{#1}}
\newcommand{\close}[1]{\overline{#1}}
\newcommand{\frozen}{dashed}
\newcommand{\mut}{\mathrm{m}}
\begin{document}

\begin{abstract}
We survey results on mutations of Jacobian algebras, while simultaneously extending them to the more general setup of frozen Jacobian algebras, which arise naturally from dimer models with boundary and in the context of the additive categorification of cluster algebras with frozen variables via Frobenius categories. As an application, we show that the mutation of cluster-tilting objects in various such categorifications, such as the Grassmannian cluster categories of Jensen--King--Su, is compatible with Fomin--Zelevinsky mutation of quivers. We also describe an extension of this combinatorial mutation rule allowing for arrows between frozen vertices, which the quivers arising from categorifications and dimer models typically have.
\end{abstract}
\maketitle

\section{Introduction}

Jacobian algebras, defined via the data of a quiver with potential, play an important role in the theory of cluster algebras, particularly in the context of their categorification by triangulated categories \cite{buanmutation,kulkarnidimer,amiotcluster,derksenquivers1}. However, the concept predates this subject, appearing for example in the mathematical physics of dimer models \cite{hananydimer}, which has then found applications in algebraic and noncommutative geometry \cite{broomheaddimer, davisonconsistency, bocklandtconsistency, bocklandtgeometric} and mirror symmetry \cite{bocklandtnoncommutative}. More recently, it has been fruitful to replace the quiver by an ice quiver, by declaring a particular subquiver to be frozen, leading to the more general notion of a frozen Jacobian algebra. These algebras appear naturally when considering dimer models on surfaces with boundary \cite{francobipartite}, as well as endomorphism algebras of cluster-tilting objects in Frobenius categorifications of cluster algebras with frozen variables \cite{baurdimer,buanmutation,presslandcategorification}.

The goal of the present paper is to fill a literature gap by extending several key results about ordinary Jacobian algebras to the frozen case. In outline, our results are as follows.

\begin{enumerate}[label=(\roman*)]
\item\label{contents-reduction} In \Cref{s:reduction}, we explain how to modify an ice quiver with potential, without changing the isomorphism class of the frozen Jacobian algebra it defines, so that the quiver is the Gabriel quiver of this algebra.
\item\label{contents-mutation} In \Cref{s:mutation}, we explain how ice quivers with potential are transformed under a local move called mutation, and conditions on the potential which make this process compatible with Fomin--Zelevinsky mutation of quivers. Fomin--Zelevinsky mutation is usually defined for ice quivers not having any arrows between their frozen vertices; since we want to allow such arrows, we extend the definition accordingly. While this extension is likely already known to experts, we are not aware of it having been previously formalised in the literature.
\item\label{contents-ct-objects} In \Cref{s:ct-objects}, we consider frozen Jacobian algebras arising as endomorphism algebras of cluster-tilting objects in Frobenius cluster categories \cite[Defn.~3.3]{presslandinternally}, which are certain stably $2$-Calabi--Yau Frobenius categories well-suited to the additive categorification of cluster algebras and appear frequently in the literature \cite{buanmutation,geisspartial,
jensencategorification,presslandcategorification}. Such an algebra can be mutated in two ways; firstly as a frozen Jacobian algebra, using the combinatorial mutation procedure of \Cref{s:mutation}, and secondly by mutating the cluster-tilting object in the sense of Iyama--Yoshino \cite{iyamamutation} and taking the endomorphism algebra of the result. We give sufficient conditions on the category for these two operations to coincide, and for them to induce an extended Fomin--Zelevinsky mutation of the Gabriel quiver of the endomorphism algebra. This leads to new results on several well-studied classes of Frobenius cluster category, such as the Grassmannian cluster categories of Jensen--King--Su \cite{jensencategorification}.
\end{enumerate}

We stress that this paper owes a significant debt to the work preceding it, particularly that of Derksen--Weyman--Zelevinsky \cite{derksenquivers1} (who deal with points \ref{contents-reduction} and \ref{contents-mutation} in the case of ordinary Jacobian algebras) and Buan--Iyama--Reiten--Smith \cite{buanmutation} (who deal with \ref{contents-ct-objects} for ordinary Jacobian algebras and triangulated categories), and for this reason we consider the present paper to be in part a survey of their results. At the same time, the generalisations we present here are by now applicable enough that we felt it necessary to make them explicit in the literature---the lack of a clean reference for these statements may be partly responsible for the fact that papers providing Frobenius categorifications of cluster algebras with frozen variables have often avoided commenting explicitly on compatibility of mutations in the sense of \ref{contents-ct-objects}.

We note that an earlier version of \Cref{s:ct-objects} was included in the first arXiv version of \cite{presslandcategorification}, where some of the results are applied.

Throughout, all algebras are $\KK$-algebras, and all categories $\KK$-categories, over a field $\KK$. Without further explanation, `module' is taken to mean `left module'. Arrows and functions are composed from right to left.

\section{Frozen Jacobian algebras}
\label{s:FJAs}

We begin by introducing the various pieces of combinatorial data needed to define a frozen Jacobian algebra.

\begin{defn}
A \emph{quiver} is a tuple $Q=(Q_0,Q_1,\head,\tail)$, where $Q_0$ and $Q_1$ are sets, and $\head,\tail\colon Q_1\to Q_0$ are functions. Graphically, we think of the elements of $Q_0$ as vertices and those of $Q_1$ as arrows, so that each $\alpha\in Q_1$ is realised as an arrow $\alpha\colon\tail{\alpha}\to\head{\alpha}$. We call $Q$ \emph{finite} if $Q_0$ and $Q_1$ are finite sets.
\end{defn}

\begin{defn}
Let $Q$ be a quiver. A quiver $F=(F_0,F_1,\head',\tail')$ is a \emph{subquiver} of $Q$ if it is a quiver such that $F_0\subseteq Q_0$, $F_1\subseteq Q_1$ and the functions $\head'$ and $\tail'$ are the restrictions of $\head$ and $\tail$ to $F_1$. We say $F$ is a \emph{full} subquiver if $F_1=\{\alpha\in Q_1:\head{\alpha},\tail{\alpha}\in F_0\}$, so that a full subquiver of $Q$ is completely determined by its set of vertices. 
\end{defn}

\begin{defn}
An \emph{ice quiver} is a pair $(Q,F)$, where $Q$ is a quiver, and $F$ is a (not necessarily full) subquiver of $Q$. We call $F_0$, $F_1$ and $F$ the \emph{frozen} vertices, arrows and subquiver respectively. Vertices of $Q$ not in $F_0$ will be called \emph{mutable} (cf.~Definition~\ref{iqpmutation}), whereas arrows of $Q$ not in $F_1$ will be simply called \emph{unfrozen}.
\end{defn}

\begin{rem}
We note that by not insisting that $F$ is a full subquiver in the above definition, our subsequent definitions will differ slightly from those of other authors, e.g.\ \cite[Defn.~1.1]{buanmutation}, who do make this insistence. Aside from being more general, allowing for the case that $F$ is not a full subquiver of $Q$ is convenient when it comes to describing mutations of ice quivers with potential in Section~\ref{s:mutation}.
\end{rem}

We also note that an ice quiver is part of the data of a seed in a cluster algebra; we recommend Keller's survey \cite{kellercluster} for an overview of this construction. In this context, while the choice of frozen vertices is important, since they correspond to the frozen variables appearing in every cluster, frozen arrows (or indeed any arrows between the frozen vertices) play no role, and so are often omitted. However, in the context of categorification of cluster algebras, which we will discuss more in Section~\ref{s:ct-objects}, these arrows become relevant again; the categorification enhances the data of a seed by replacing the ice quiver by an algebra, and the Gabriel quiver of this algebra typically does have arrows between its frozen vertices. Another situation in which such arrows appear naturally is in the context of dimer models on surfaces with boundary, as we now explain.

\begin{eg}
\label{e:dimers1}
A \emph{dimer model} is a pair $(\Sigma,G)$ in which $\Sigma$ is an oriented surface, possibly with boundary, and $G$ is a bipartite graph embedded in $\Sigma$ such that $\partial\Sigma$ does not intersect the vertex set of the graph, together with a collection of arcs in $\Sigma$ which intersect the graph in a vertex and $\partial\Sigma$ in one point, such that $\Sigma\setminus G$ is a disjoint union of discs. We usually think of $G$ simply as a graph, with the additional arcs being \emph{half-edges} that have one end-point `outside' of $\Sigma$. Indeed, if $\Sigma$ is a closed surface, then $G$ is an honest graph, and any dimer model $(\Sigma,G)$ can be realised as $(\Sigma,\Sigma\cap\widehat{G})$ for some dimer model $(\widehat{\Sigma},\widehat{G})$, where $\widehat{\Sigma}$ is a closed surface containing $\Sigma$. A concrete example of a dimer model drawn in the disk is shown in Figure~\ref{f:dimereg}.

\begin{figure}
    \centering
    \begin{minipage}[t]{0.45\textwidth}
        \centering
\begin{tikzpicture}[scale=2.5,baseline=(bb.base)]

\path (0,0) node (bb) {};

\draw [boundary] (0,0) circle(1.0);

\foreach \n/\a in {1/-12, 2/0, 3/5, 4/10, 5/12}
{ \coordinate (b\n) at (\bstart-\fifth*\n+\a:1.0);
\coordinate (l\n) at (\bstart-\fifth*\n+\a-4:0.85);
\coordinate (r\n) at (\bstart-\fifth*\n+\a+4:0.85);}

\foreach \n/\m in {6/1, 7/2, 8/3, 9/4, 10/5}
  {\coordinate (b\n) at ($0.55*(b\m)$);}
\foreach \n/\m in {12/2, 13/4}
  {\coordinate (b\n) at ($0.80*(b\m)$);}

\coordinate (b11) at ($0.4*(b6) + 0.4*(b10)$);

\foreach \n/\s in {6/1.2, 7/0.9, 8/0.7, 9/0.9, 10/1.2}
{\coordinate (b\n) at ($\s*(b\n)$);}

\foreach \h/\t in {1/6, 2/12, 7/12, 3/8, 4/13, 9/13, 5/10, 6/7, 7/8, 8/9, 9/10, 10/11, 6/11, 8/11}
{ \draw [bipedge] (b\h)--(b\t); }

\foreach \n in {6,8,10,12,13} 
  {\draw [\graphcolor] (b\n) circle(\dotrad) [fill=\graphcolor];} 
\foreach \n in {7,9,11}  
  {\draw [\graphcolor] (b\n) circle(\dotrad) [fill=white];} 

\foreach \e/\f/\t in {1/6/0.5, 2/12/0.5, 7/12/0.5, 3/8/0.5, 4/13/0.5, 9/13/0.5, 5/10/0.5, 6/7/0.5, 7/8/0.5, 8/9/0.5, 9/10/0.5, 10/11/0.5, 6/11/0.5, 8/11/0.6}
{\coordinate (a\e-\f) at ($(b\e) ! \t ! (b\f)$); }

 \end{tikzpicture}

\caption{A dimer model in the disk.}
\label{f:dimereg}
    \end{minipage}\hfill
    \begin{minipage}[t]{0.55\textwidth}
        \centering
\begin{tikzpicture}[scale=2.5,baseline=(bb.base)]

\path (0,0) node (bb) {};


\foreach \n/\a in {1/-12, 2/0, 3/5, 4/10, 5/12}
{ \coordinate (b\n) at (\bstart-\fifth*\n+\a:1.0);
  \coordinate (l\n) at (\bstart-\fifth*\n+\a-4:0.85);
  \coordinate (r\n) at (\bstart-\fifth*\n+\a+4:0.85);}

\foreach \n/\m in {6/1, 7/2, 8/3, 9/4, 10/5}
  {\coordinate (b\n) at ($0.55*(b\m)$);}
\foreach \n/\m in {12/2, 13/4}
  {\coordinate (b\n) at ($0.80*(b\m)$);}

\coordinate (b11) at ($0.4*(b6) + 0.4*(b10)$);

\foreach \n/\s in {6/1.2, 7/0.9, 8/0.7, 9/0.9, 10/1.2}
{\coordinate (b\n) at ($\s*(b\n)$);}



\foreach \e/\f/\t in {1/6/0.5, 2/12/0.5, 7/12/0.5, 3/8/0.5, 4/13/0.5, 9/13/0.5, 5/10/0.5, 6/7/0.5, 7/8/0.5, 8/9/0.5, 9/10/0.5, 10/11/0.5, 6/11/0.5, 8/11/0.6}
{\coordinate (a\e-\f) at ($(b\e) ! \t ! (b\f)$); }

\foreach \n/\m/\a in {1/45/0, 2/15/-7, 3/12/0, 4/23/10, 5/34/12}
{ \draw (\bstart+\fifth/2-\fifth*\n+\a:0.93) node (q\m) {$\diamond$}; }

\foreach \m/\a/\r in {25/14/0.25, 24/176/0.22}
{ \draw (\a:\r) node (q\m) {$\bullet$}; }

\foreach \t/\h/\a in {23/12/-27, 34/23/-27, 45/34/-32, 15/45/-32, 12/15/-27}
{ \draw [frozarrow]  (q\t) edge [bend left=\a] (q\h); }
 
\foreach \t/\h/\a in {25/15/12, 15/12/-25, 12/25/23, 25/24/2, 24/45/20, 45/25/20, 24/23/20, 23/34/-20, 34/24/12}
{ \draw [quivarrow]  (q\t) edge [bend left=\a] (q\h); }

 \end{tikzpicture}

\caption{The ice quiver of the dimer model shown in Figure~\ref{f:dimereg}. Frozen arrows are dashed, and frozen vertices shown as white diamonds.}
\label{f:dimerquiveg}
    \end{minipage}
\end{figure}

A dimer model $(\Sigma,G)$ determines an ice quiver $(Q,F)$, also embedded in $\Sigma$, as follows. The vertices of $Q$ are the connected components of $\Sigma\setminus G$, which we call the \emph{faces} of the dimer, and each edge (or half-edge) of $G$ determines an arrow between the faces it separates; we think of the arrow and this edge as dual, and orient the arrow so that the black vertex of the dual edge is on its left. The vertices of $F$ are the components of $\Sigma\setminus G$ meeting $\partial\Sigma$, and its arrows are those arrows of $Q$ dual to a half-edge of $G$. An explicit example is shown in Figure~\ref{f:dimerquiveg}.

Each vertex of $G$ determines a cycle in $Q$, by composing the arrows dual to edges incident with the vertex. This cycle is oriented in a way consistent with the orientation of the surface when the vertex is black, and with the opposite orientation when the vertex is white.

Originally appearing in the context of statistical mechanics \cite{kasteleynstatistics,temperleydimer}, these constructions have been well-studied in the mathematics and physics literature; see for example \cite{okounkovquantum,hananydimer,broomheaddimer,davisonconsistency} in the case that $\Sigma$ is closed, and \cite{baurdimer,francobipartite} in the general case.
\end{eg}

\begin{defn}
Let $Q$ be a quiver, and let $S$ be the semisimple $\KK$-algebra whose underlying vector space has the basis $\set{e_v:v\in Q_0}$, with multiplication induced from $e_v\cdot e_w=\delta_{vw}e_v$. Equip $\KK Q_1$ with the structure of an $S$-bimodule by defining
\[e_{\head{\alpha}}\alpha e_{\tail{\alpha}}=\alpha,\]
noting that this implies $e_v\alpha e_w=0$ whenever $v\ne\head{\alpha}$ or $w\ne\tail{\alpha}$. Then the \emph{complete path algebra} of $Q$ is the complete tensor algebra
\[\cpa{\KK}{Q}:=\compgen{S}{\KK Q_1},\]
so $\cpa{\KK}{Q}$ has underlying vector space
\[\prod_{d=0}^\infty(\KK Q_1)^{\tens_Sd},\]
and multiplication induced from the tensor product. In practice, the elements of $\cpa{\KK}{Q}$ are possibly infinite $\KK$-linear combinations of paths of $Q$, and the product of two paths is their concatenation when this is well-defined, and zero when it is not. We treat $\cpa{\KK}{Q}$ as a topological algebra by equipping it with the $J$-adic topology, where $J$ is the two-sided \emph{arrow ideal}
\[J=\prod_{d=1}^\infty(\KK Q_1)^{\tens_Sd}.\]
This allows us to talk about closed ideals in $\cpa{\KK}{Q}$; the closure of the ideal generated by a set $R\subseteq\cpa{\KK}{Q}$ is
\[\close{\Span{R}}=\set{\sum_{i=1}^\infty a_ir_ib_i:\text{$a_i,b_i\in\cpa{\KK}{Q}$, $r_i\in R$}}.\]
We may identify $S$ with the subalgebra of $\cpa{\KK}{Q}$ spanned by the length-zero paths, i.e.\ with $\KK Q_0$, by associating each vertex $v$ with the generator $e_v$ of $S$.
\end{defn}

\begin{eg}
\label{e:cpas}
The reader is warned that, when $Q$ has oriented cycles, the complete path algebra $\cpa{\KK}{Q}$ can be rather different from the ordinary (uncompleted) path algebra $\KK Q$, whose elements are finite linear combinations of paths. The key fact is that idempotents can be lifted from $S\cong\cpa{\KK}{Q}/J$ to $\cpa{\KK}{Q}$ (cf.\ \cite[Lem.~I.4.4]{assemelements}), meaning that many techniques used in the representation theory of finite-dimensional algebras apply equally well to the (often infinite-dimensional) algebra $\cpa{\KK}{Q}$.

An instructive example is to consider the case that $Q$ consists of a single vertex and a loop $x$, so that $\KK Q=\KK[x]$ is a polynomial ring in one variable, whereas $\cpa{\KK}{Q}=\powser{\KK}{x}$ is a ring of power series. Thus $\KK Q$ has maximal ideals $\Span{x-\lambda}$ for each $\lambda\in\KK$, so it has one simple module for each element of $\KK$ and its Jacobson radical is zero. On the other hand, $\cpa{\KK}{Q}$ is in many ways more like a finite-dimensional path algebra---its unique maximal ideal is $J=\compgen{}{x}$, generated by the arrow, so the arrow ideal is the Jacobson radical, and there is a unique simple module, up to isomorphism, corresponding to the unique vertex of $Q$.
\end{eg}

\begin{defn}
\label{d:potential}
Let $Q=(Q_0,Q_1,\head,\tail)$ be a finite quiver. We may grade $\cpa{\KK}{Q}$ by path length; since $\cpa{\KK}{Q}$ is generated by vertices and arrows, which are homogeneous, we get an induced grading on the vector space quotient $\cpa{\KK}{Q}/\close{\{\cpa{\KK}{Q},\cpa{\KK}{Q}\}}$, where  $\{\cpa{\KK}{Q},\cpa{\KK}{Q}\}$ denotes the vector subspace of $\cpa{\KK}{Q}$ spanned by commutators\footnote{This vector subspace is mistakenly replaced by the ideal generated by commutators in the published version.}. Then a \emph{potential} on $Q$ is an element $W\in\cpa{\KK}{Q}/\close{\{\cpa{\KK}{Q},\cpa{\KK}{Q}\}}$ expressible as a (possibly infinite) linear combination of homogeneous elements of degree at least $2$, such that any term involving a loop has degree at least $3$. An \emph{ice quiver with potential} is a tuple $(Q,F,W)$ in which $(Q,F)$ is a finite ice quiver, and $W$ is a potential on $Q$. If $F=\varnothing$ is the empty quiver, then $(Q,\varnothing,W)=:(Q,W)$ is called simply a \emph{quiver with potential}.
\end{defn}

\begin{rem}
While the definition in \cite{derksenquivers1} of a quiver with potential does not allow any loops in the quiver $Q$, our weaker assumption controlling how they appear in the potential will be sufficient for some purposes; see Section~\ref{s:reduction} for some comments on how this assumption is used. We want to allow this increased level of generality where possible, since quivers with potential including loops appear naturally in certain contexts, e.g.\ as contraction algebras in the sense of Donovan and Wemyss \cite{donovannoncommutative}. When we move on to discussing mutations in Sections~\ref{s:mutation} and \ref{s:ct-objects}, we will need stronger assumptions on the non-existence of loops.
\end{rem}

One may think of a potential as a formal linear combination of cyclic paths in $Q$ (of length at least $2$), considered up to the equivalence relation on such cycles induced by
\[\alpha_n\dotsm\alpha_1\sim\alpha_{n-1}\dotsm\alpha_1\alpha_n,\]
since every element $\cpa{\KK}{Q}/\close{\{\cpa{\KK}{Q},\cpa{\KK}{Q}\}}$ is uniquely expressible as a (possibly infinite) linear combination of equivalence classes of cycles under this relation.

The combinatorial data of an ice quiver with potential can be used to define an algebra, which is our main object of study.

\begin{defn}
\label{d:jac-alg}
Let $p=\alpha_n\dotsm\alpha_1$ be a cyclic path, with each $\alpha_i\in Q_1$, and let $\alpha\in Q_1$ be any arrow. Then the \emph{cyclic derivative} of $p$ with respect to $\alpha$ is
\[\der{\alpha}{p}:=\sum_{\alpha_i=\alpha}\alpha_{i-1}\dotsm\alpha_1\alpha_n\dotsm\alpha_{i+1}.\]
Extending $\der{\alpha}{}$ by linearity and continuity, it determines a map $\cpa{\KK}{Q}/\close{\{\cpa{\KK}{Q},\cpa{\KK}{Q}\}}\to\cpa{\KK}{Q}$.
For an ice quiver with potential $(Q,F,W)$, we define the \emph{frozen Jacobian algebra}
\[\frjac{Q}{F}{W}=\cpa{\KK}{Q}/\close{\Span{\der{\alpha}W:\alpha\in Q_1\setminus F_1}}.\]
If $F=\varnothing$, we omit it from the notation, and call $\jac{Q}{W}:=\frjac{Q}{\varnothing}{W}$ the \emph{Jacobian algebra} of the quiver with potential $(Q,W)$.
\end{defn}

\begin{rem}
To compute the cyclic derivatives $\partial_\alpha W$, we pick a representative of $W$ in $\cpa{\KK}{Q}$. It is straightforward to check that the result is independent of this choice. Note that not all of the data in the frozen subquiver $F$ is used in the definition; rather, we need only the set $F_1$ of arrows. Said differently, any vertices of $F_0$ not incident with any arrows in $F_1$ can be freely chosen to be mutable or frozen without affecting the algebra $\frjac{Q}{F}{W}$. However, recording all the frozen vertices is more compatible with the defining data of a cluster algebra with frozen variables, and the choice of which vertices are frozen plays a role in some of our later results, primarily in Section~\ref{s:ct-objects}. The choice of frozen vertices---or more precisely the sum of vertex idempotents at these vertices---is also important in \cite[\S5]{presslandinternally}.
\end{rem}

Jacobian algebras are somewhat ubiquitous; it has been shown by Buan--Iyama--Reiten--Smith \cite[Cor.~6.8]{buanmutation} (see also Keller \cite[Thm.~6.12]{kellerdeformed}) that cluster-tilted algebras are finite-dimensional Jacobian algebras, and Bocklandt \cite[Thm.~3.1]{bocklandtgraded} has shown that any graded $3$-Calabi--Yau algebra is a (necessarily infinite-dimensional) Jacobian algebra. The author \cite[\S5]{presslandinternally} has shown that frozen Jacobian algebras are good candidates for internally $3$-Calabi--Yau algebras (defined in loc.\ cit.) which, under some additional `smallness' conditions, can be used to construct Frobenius cluster categories \cite[Thm.~4.1]{presslandinternally}.

\begin{eg}
\label{e:dimers2}
Let $(\Sigma,G)$ be a dimer model, as in Example~\ref{e:dimers1}, defining an ice quiver $(Q,F)$. We already noted that each vertex $v$ of $G$ determines a cycle $C_v$ of $Q$, with orientation (relative to that of $\Sigma$) determined by the colour of $v$. We use these cycles to define a potential on $Q$, by
\[W=\sum_{\text{$v$ black}}C_v-\sum_{\text{$v$ white}}C_v.\]
The Jacobian algebra $\frjac{Q}{F}{W}$ is called the \emph{dimer algebra} of $(\Sigma,G)$. The relations arising from the potential are sometimes known as \emph{F-term} relations, and can be described as follows: each unfrozen arrow $\alpha$ can be completed in two ways to a cycle around one of the vertices of $G$, one winding around a black vertex and the other around a white vertex, and the relation $\partial_\alpha W=0$ means that the two paths obtained by removing $\alpha$ from these cycles are equal in the Jacobian algebra. When $\alpha$ is a frozen arrow, dual to a half-edge, only one of the two cycles will exist, and the path obtained by removing $\alpha$ from this cycle defines a non-zero element of $\frjac{Q}{F}{W}$.

In the case that $\Sigma$ is closed, Broomhead \cite{broomheaddimer} showed that under various consistency conditions on $G$ (the strongest of which implies that $\Sigma\iso S^1\times S^1$ is a torus), the dimer algebra is a $3$-Calabi--Yau noncommutative crepant resolution of a toric singularity. In the case that $\Sigma$ is a disk, Baur--King--Marsh \cite{baurdimer} show, for careful choices of $G$, that the dimer algebra is the endomorphism algebra of a cluster-tilting object in Jensen--King--Su's Grassmannian cluster category \cite{jensencategorification}.
\end{eg}

\section{Reduction}
\label{s:reduction}

In this section, we discuss some operations on an ice quiver with potential $(Q,F,W)$ that do not affect the isomorphism class of $\frjac{Q}{F}{W}$. Our first reduction is straightforward, and only applies when the subquiver $F$ has cycles.

\begin{defn}
Let $(Q,F,W)$ be an ice quiver with potential. We call $W$, and also $(Q,F,W)$, \emph{irredundant} if each term of $W$ includes at least one unfrozen arrow.
\end{defn}

\begin{prop}
\label{p:irredundant}
Let $(Q,F,W)$ be an ice quiver with potential. Then there is an irredundant potential $W^\circ$ such that $\frjac{Q}{F}{W}\cong\frjac{Q}{F}{W^\circ}$.
\end{prop}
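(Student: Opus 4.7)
The plan is to construct $W^\circ$ by simply deleting from $W$ every term whose cyclic representatives use only frozen arrows. Concretely, think of $W$ as a (possibly infinite) $\KK$-linear combination of equivalence classes of cyclic paths under the rotation relation $\alpha_n\dotsm\alpha_1\sim\alpha_{n-1}\dotsm\alpha_1\alpha_n$. Since cyclic rotation preserves the underlying multiset of arrows, the property of a cycle ``containing at least one arrow not in $F_1$'' is invariant under $\sim$. Thus we may split
\[W=W^\circ+W^{\mathrm{fr}},\]
where $W^\circ$ consists of those equivalence classes with at least one unfrozen arrow and $W^{\mathrm{fr}}$ consists of those classes representable by a cycle entirely in $F_1$. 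By construction $W^\circ$ is irredundant.

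The claim is then that the two potentials define literally the same frozen Jacobian ideal. For any unfrozen arrow $\alpha\in Q_1\setminus F_1$ and any cyclic path $p=\beta_n\dotsm\beta_1$ with each $\beta_i\in F_1$, the arrow $\alpha$ does not occur among the $\beta_i$, so every summand of the cyclic derivative
\[\der{\alpha}{p}=\sum_{\beta_i=\alpha}\beta_{i-1}\dotsm\beta_1\beta_n\dotsm\beta_{i+1}\]
is empty. By $\KK$-linearity and continuity of $\der{\alpha}{}$, this forces $\der{\alpha}W^{\mathrm{fr}}=0$, and therefore $\der{\alpha}W=\der{\alpha}W^\circ$ for every $\alpha\in Q_1\setminus F_1$.

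It follows immediately that the closed ideals
\[\close{\Span{\der{\alpha}W:\alpha\in Q_1\setminus F_1}}\quad\text{and}\quad\close{\Span{\der{\alpha}W^\circ:\alpha\in Q_1\setminus F_1}}\]
of $\cpa{\KK}{Q}$ coincide, so $\frjac{Q}{F}{W}=\frjac{Q}{F}{W^\circ}$ on the nose (and in particular they are isomorphic).

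There is essentially no obstacle here; the only subtlety, if any, is the remark that the decomposition $W=W^\circ+W^{\mathrm{fr}}$ is well defined, which boils down to the invariance of the multiset of arrows in a cyclic path under the equivalence relation $\sim$. Everything else is a direct computation from the definition of the cyclic derivative.
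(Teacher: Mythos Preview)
Your proof is correct and follows essentially the same approach as the paper: decompose $W=W^\circ+W^\partial$ by separating off the terms involving only frozen arrows, and observe that $\der{\alpha}W^\partial=0$ for every unfrozen $\alpha$, so the Jacobian ideals coincide. The paper's proof is more terse, but your added remark about the decomposition being well defined on cyclic equivalence classes is a reasonable elaboration.
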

\begin{proof}
Collecting terms containing only frozen arrows, there is a unique expression $W=W^\circ+W^\partial$ in which $W^\circ$ is irredundant and $W^\partial$ is a potential on $F$. Then $\der{\alpha}{W^\partial}=0$ for any $\alpha\in Q_1\setminus F_1$, so $\frjac{Q}{F}{W}\cong\frjac{Q}{F}{W^\circ}$.
\end{proof}

The main reduction operation of this section is motivated by the fact that the ideal of $\cpa{\KK}{Q}$ generated by the cyclic derivatives $\partial_\alpha W$ may not be admissible, in the following sense.

\begin{defn}
Let $Q$ be a quiver. An ideal of $\cpa{\KK}{Q}$ is called \emph{admissible} if it is contained in $J^2=\prod_{d=2}^\infty(\KK Q_1)^{\tens_Sd}$. We call an ice quiver with potential $(Q,F,W)$ \emph{reduced} if $W$ is irredundant and the Jacobian ideal of $\cpa{\KK}{Q}$ determined by $F$ and $W$ is admissible.
\end{defn}

\begin{rem}
\label{r:gabriel}
Since we sometimes wish to consider infinite-dimensional algebras, our definition of admissibility differs from the usual one (e.g.\ \cite[Defn.~II.2.1]{assemelements}) by dropping the requirement that the ideal is contained in $J^n=\prod_{d=n}^\infty(\KK Q_1)^{\tens_Sd}$ for some $n\in\NN$. Since we defined $\frjac{Q}{F}{W}$ using the complete path algebra of $Q$, its quotient by its Jacobson radical is semisimple, and it has the idempotent lifting property (cf.\ Example~\ref{e:cpas}) and so it has a well-defined Gabriel quiver. It is a direct consequence of the definition that if $(Q,F,W)$ is reduced, this Gabriel quiver is $Q$.
\end{rem}

\begin{rem}
\label{r:dwz-comparison}
The Jacobian ideal of an irredundant potential $W$ is admissible if and only if no term of $W$ is a $2$-cycle (recalling that we already insist in the definition that no term of $W$ may be a loop). Thus our definition of reduced is equivalent to that of Amiot--Reiten--Todorov \cite[\S1.3]{amiotubiquity}, and also agrees with Derksen--Weyman--Zelevinsky's definition \cite[\S4]{derksenquivers1} in the case $F=\varnothing$, in which case every potential is automatically irredundant.
\end{rem}

The main result of this section allows us to replace any ice quiver with potential by a reduced one, without affecting the isomorphism class of the Jacobian algebra. This is a version of Derksen--Weyman--Zelevinsky's splitting theorem \cite[Thm.~4.6]{derksenquivers1} for ordinary quivers with potential. Indeed, our proof will be very similar, so we refer heavily to \cite{derksenquivers1} when the arguments apply essentially without change, focussing instead on where some adaptation is necessary to deal with frozen arrows.

\begin{thm}
\label{thm:reduction}
Let $(Q,F,W)$ be an ice quiver with potential. Then there exists a reduced ice quiver with potential $(Q_\red,F_\red,W_\red)$ such that $\frjac{Q}{F}{W}\cong\frjac{Q_\red}{F_\red}{W_\red}$.
\end{thm}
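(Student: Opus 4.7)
The plan is to follow the strategy of Derksen--Weyman--Zelevinsky's splitting theorem \cite[Thm.~4.6]{derksenquivers1}, adapting the argument to account for the asymmetry introduced by the frozen structure. By \Cref{p:irredundant}, I may assume at the outset that $W$ is irredundant. Write $W=U+W'$, where $U$ is the (finite) degree-$2$ part of $W$ and $W'$ consists of all terms of degree at least $3$. Irredundancy of $W$ guarantees that every $2$-cycle of $U$ contains at least one unfrozen arrow, and hence produces a cyclic-derivative relation in $\frjac{Q}{F}{W}$.

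From $U$ I would read off the set $A\subseteq Q_1$ of arrows to be deleted by the following prescription: if both arrows of a $2$-cycle of $U$ are unfrozen, include both in $A$; if one is frozen and the other unfrozen, include only the frozen one. After a harmless linear change of basis on the arrows, I may assume the $2$-cycles in $U$ involve disjoint pairs of arrows, so that this prescription is unambiguous. Set $Q_\red:=(Q_0,Q_1\setminus A)$, with the restricted head and tail maps, and $F_\red:=(F_0,F_1\setminus A)$. The next step is to construct, by successive approximation in the $J$-adic topology as in \cite[\S4--5]{derksenquivers1}, a continuous $S$-algebra automorphism $\varphi$ of $\cpa{\KK}{Q}$ that fixes every arrow outside $A$ and sends each $\alpha\in A$ to $\alpha$ plus terms of degree at least $2$, such that $\varphi(W)$ has the form $U+W_\red$ with $W_\red$ supported on arrows of $Q_\red$. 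Completeness of $\cpa{\KK}{Q}$ guarantees convergence of the iteration.

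With $\varphi$ in hand, it remains to verify that $\varphi$ descends to an isomorphism $\frjac{Q}{F}{W}\isoto\frjac{Q_\red}{F_\red}{W_\red}$. For each non-frozen $\alpha\in A$, the derivative $\der{\alpha}(U+W_\red)$ equals a non-zero scalar multiple of the $2$-cycle partner of $\alpha$, plus a sum of paths avoiding $A$; these relations express the arrows of $A$ as elements lying in paths of $Q_\red$, and allow us to eliminate them from the quotient. What remains are precisely the relations $\der{\alpha}W_\red$ for $\alpha\in(Q_\red)_1\setminus(F_\red)_1$, which is the defining ideal of $\frjac{Q_\red}{F_\red}{W_\red}$. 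Since $W_\red$ contains no $2$-cycle terms and is irredundant by construction, the triple $(Q_\red,F_\red,W_\red)$ is reduced, as required.

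The main obstacle, relative to \cite{derksenquivers1}, is that the cyclic derivative $\der{\beta}$ with respect to a frozen arrow $\beta$ is \emph{not} available as a Jacobian relation, so the symmetric DWZ elimination of both arrows of a $2$-cycle must be replaced by a one-sided elimination whenever exactly one arrow of the $2$-cycle is frozen. Checking that the iterative construction of $\varphi$ can still be carried out using only the derivatives of unfrozen arrows, and that the surviving relations genuinely generate the Jacobian ideal of $(Q_\red,F_\red,W_\red)$ rather than a larger one that inadvertently kills the retained unfrozen halves of mixed $2$-cycles, is the point at which careful bookkeeping is required.
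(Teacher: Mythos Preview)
Your overall strategy is the right one, but the definition of $F_\red$ is wrong, and this is not merely a matter of bookkeeping. You set $F_\red=(F_0,F_1\setminus A)$; since for a half-frozen $2$-cycle $\alpha\beta$ (with $\alpha$ unfrozen, $\beta$ frozen) you put only $\beta$ into $A$, the arrow $\alpha$ survives in $Q_\red$ and remains \emph{unfrozen}. Consequently $\der{\alpha}W_\red$ is a defining relation of your $\frjac{Q_\red}{F_\red}{W_\red}$, whereas in the original algebra the corresponding relation $\der{\alpha}(U+W_\red)=\beta+\der{\alpha}W_\red$ is used only to eliminate $\beta$, not to force $\der{\alpha}W_\red=0$. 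Concretely, take the example from \Cref{eg:reduction}: there $W=\gamma_3\gamma_4+\gamma_3\gamma_1\gamma_2$ with $\gamma_4$ frozen and $\gamma_3$ unfrozen; your recipe produces $W_\red=\gamma_3\gamma_1\gamma_2$ with $\gamma_3$ still unfrozen, hence the spurious relation $\gamma_1\gamma_2=0$. The paper's fix is exactly what is missing from your construction: after deleting the frozen arrow $\beta$, one must \emph{freeze} its unfrozen partner $\alpha$, so that no derivative with respect to $\alpha$ is taken in the reduced algebra.

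There is a second, smaller problem. You assert that the normalising automorphism $\varphi$ can be chosen to fix every arrow outside $A$; for a half-frozen $2$-cycle this means $\varphi$ modifies only the frozen arrow $\beta$. But modifying $\beta$ alone cannot remove $\beta$ from a term such as $\gamma_1\gamma_2\beta$: one needs to adjust $\alpha$ (or more generally the unfrozen arrows appearing in $2$-cycles) to absorb such terms. This is why the paper's normal form (\Cref{l:neat-potential}) only requires $p_i=0$ when $\beta_i$ is unfrozen, and why its right equivalence is permitted to move the unfrozen $\alpha_i$. Your construction should allow $\varphi$ to act on these unfrozen arrows as well.
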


The proof of this theorem follows closely that of \cite[Thm.~4.6]{derksenquivers1}, and so we reproduce the necessary definitions and results from loc.\ cit., generalising to ice quivers with potential where necessary.

\begin{defn}[cf.~{\cite[Def.~4.2]{derksenquivers1}}]
\label{d:right-equiv}
Let $(Q,F)$ and $(Q',F')$ be ice quivers such that $Q_0=Q_0'$ and $F_0=F_0'$. In particular, this means that $\cpa{\KK}{Q}$ and $\cpa{\KK}{Q'}$ are complete tensor algebras over the same semisimple algebra $S=\KK Q_0$. An isomorphism $\varphi\colon\cpa{\KK}{Q}\to\cpa{\KK}{Q'}$ is said to be a \emph{right equivalence} of the ice quivers with potential $(Q,F,W)$ and $(Q',F',W')$ if
\begin{enumerate}[label=(\roman*)]
\item\label{rightequiv1} $\varphi|_S=\id{S}$,
\item\label{rightequiv2} $\varphi(\cpa{\KK}{F})=\cpa{\KK}{F'}$, where $\cpa{\KK}{F}$ and $\cpa{\KK}{F'}$ are treated in the natural way as subalgebras of $\cpa{\KK}{Q}$ and $\cpa{\KK}{Q'}$ respectively, and
\item\label{rightequiv3} $\varphi(W)$ is cyclically equivalent to $W'$.
\end{enumerate}
\end{defn}

\begin{rem}
\label{rem:inv-rightequiv}
If $\varphi$ is a right equivalence, then so is $\varphi^{-1}$. The right equivalences of $(Q,F,W)$ and $(Q',F',W')$ are precisely the right equivalences of the ordinary quivers with potential $(Q,W)$ and $(Q',W')$ \cite[Def.~4.2]{derksenquivers1} that also satisfy \ref{rightequiv2}, i.e.\ they take $\cpa{\KK}{F}$ to $\cpa{\KK}{F'}$. Thus the main way in which arguments from \cite{derksenquivers1} must be modified to fit our context is by ensuring that the necessary right equivalences can be chosen to respect the frozen subquivers.
\end{rem}

We introduce some more notation. Let
\[\cpa{\KK}{Q}\comptens_\KK\cpa{\KK}{Q}=\prod_{m,n\geq0}(\KK Q_1)^{\tens_Sm}\tens_\KK(\KK Q_1)^{\tens_Sn},\]
recalling that $S=\KK Q_0$. For any path $p=\alpha_k\dotsm\alpha_1$ of $Q$, and any $\alpha\in Q_1$, we may define
\[\Delta_\alpha(p)=\sum_{\alpha_i=\alpha}\alpha_k\dotsm\alpha_{i+1}\tens\alpha_{i-1}\dotsm\alpha_1\in\cpa{\KK}{Q}\comptens_\KK\cpa{\KK}{Q}\]
and extend by linearity and continuity to a map $\Delta_\alpha\colon\cpa{\KK}{Q}\to\cpa{\KK}{Q}\comptens_\KK\cpa{\KK}{Q}$. For $f\in\cpa{\KK}{Q}\comptens_\KK\cpa{\KK}{Q}$ and $g\in\cpa{\KK}{Q}$, we define $f\bullet g\in\cpa{\KK}{Q}$ by setting $(u\tens v)\bullet g=vgu$ and extending linearly. These definitions allow us to state a chain rule for cyclic derivatives, proved by Derksen--Weyman--Zelevinsky.

\begin{lem}[{\cite[Lem.~3.9]{derksenquivers1}}]
\label{chainrule}
If $Q$ and $Q'$ share a vertex set $Q_0$, and $\varphi\colon\cpa{\KK}{Q}\to\cpa{\KK}{Q'}$ is an algebra homomorphism restricting to the identity on $S=\KK Q_0$, then for any potential $W$ on $Q$ and any $\alpha\in Q_1'$, we have
\[\der{\alpha}{\varphi(W)}=\sum_{\beta\in Q_1}\Delta_\alpha(\varphi(\beta))\bullet\varphi(\der{\beta}{W}).\]
\end{lem}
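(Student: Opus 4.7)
The plan is to reduce, by linearity and continuity of $\der{\alpha}{}$, $\Delta_\alpha$, $\varphi$, and the bullet action, to the case where $W = \beta_n \dotsm \beta_1$ is a single cyclic monomial in the arrows of $Q$. This reduction is legitimate because every potential is a (possibly infinite) linear combination of such monomials, both sides of the claimed identity depend continuously and $\KK$-linearly on $W$, and for a single monomial the sum on the right-hand side reduces to a finite sum over the arrows actually occurring in $W$.

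For such a monomial, I would compute the left-hand side directly. Expanding $\varphi(W) = \varphi(\beta_n) \dotsm \varphi(\beta_1)$ as a (possibly infinite) sum of cyclic paths in $Q'$, every occurrence of $\alpha$ in such a summand lies inside the image of a unique factor $\varphi(\beta_i)$, which may then be split as $q\alpha r$. Applying the definition of the cyclic derivative to the cycle $\varphi(\beta_n) \dotsm \varphi(\beta_1)$ yields
\[
\der{\alpha}{\varphi(W)} = \sum_{i=1}^n \sum_{q\alpha r} r \cdot \varphi(\beta_{i-1}) \dotsm \varphi(\beta_1) \varphi(\beta_n) \dotsm \varphi(\beta_{i+1}) \cdot q,
\]
where the inner sum runs over decompositions of a summand of $\varphi(\beta_i)$ with a distinguished occurrence of $\alpha$.

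For the right-hand side, the formula $\der{\beta}{W} = \sum_{\beta_i = \beta} \beta_{i-1} \dotsm \beta_1 \beta_n \dotsm \beta_{i+1}$ together with the multiplicativity of $\varphi$ produces exactly the outer paths above, indexed by pairs $(\beta, i)$ with $\beta_i = \beta$; summing over $\beta \in Q_1$ collapses this to a single index $i$ with $\beta = \beta_i$ forced. Meanwhile $\Delta_\alpha(\varphi(\beta_i))$ records, in its two tensor factors, the pair $(q, r)$ bracketing each occurrence of $\alpha$ in a summand of $\varphi(\beta_i)$, and the bullet action $(u \tens v) \bullet g = vgu$ splices these inner pieces around the outer path in exactly the order $r \cdot (\text{outer}) \cdot q$. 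Matching decompositions term-by-term yields the identity.

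The only real subtlety is keeping the cyclic rotation straight: the part to the right of the distinguished $\alpha$ in $\varphi(\beta_i)$ must end up to the left of the outer product, and the part to the left of $\alpha$ to the right. This ordering is exactly what the definitions of $\Delta_\alpha$ and $\bullet$ are arranged to encode, so once the bookkeeping is set up correctly the identity holds by inspection; this is the step that I would expect to take the most care to write down cleanly, but it involves no analytic input beyond the continuity used in the initial reduction.
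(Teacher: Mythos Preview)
Your argument is correct: reducing by linearity and continuity to a single cyclic monomial and then matching occurrences of $\alpha$ on both sides is exactly the right computation, and your bookkeeping of the cyclic rotation via $\Delta_\alpha$ and the bullet action is accurate. Note that the paper does not supply its own proof of this lemma---it simply cites \cite[Lem.~3.9]{derksenquivers1}---so there is no in-paper argument to compare against; your proof is essentially the standard one (and the one given in the cited reference).
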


\begin{prop}[cf.~{\cite[Prop.~3.7]{derksenquivers1}}]
If $\varphi$ is a right equivalence of $(Q,F,W)$ and $(Q',F',W')$, then $\varphi$ induces an isomorphism $\frjac{Q}{F}{W}\isoto\frjac{Q'}{F'}{W'}$.
\end{prop}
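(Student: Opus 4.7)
The plan is to reduce to the non-frozen case, namely \cite[Prop.~3.7]{derksenquivers1}, while exploiting the new condition \ref{rightequiv2} in Definition \ref{d:right-equiv} to control how the Jacobian ideals restrict. Concretely, I would check that the isomorphism $\varphi\colon \cpa{\KK}{Q}\to\cpa{\KK}{Q'}$ carries the closed ideal $\close{\Span{\der{\alpha}W:\alpha\in Q_1\setminus F_1}}$ onto $\close{\Span{\der{\alpha}{W'}:\alpha\in Q_1'\setminus F_1'}}$, and then pass to the quotient.

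First I would apply the chain rule of \Cref{chainrule} to $\varphi$: for any $\alpha\in Q_1'$,
\[\der{\alpha}{\varphi(W)}=\sum_{\beta\in Q_1}\Delta_\alpha(\varphi(\beta))\bullet\varphi(\der{\beta}{W}).\]
The crucial observation is that condition \ref{rightequiv2} of \Cref{d:right-equiv} forces $\varphi(\beta)\in\cpa{\KK}{F'}$ whenever $\beta\in F_1$, and hence $\varphi(\beta)$ can be written purely in terms of arrows of $F'$. If moreover $\alpha\in Q_1'\setminus F_1'$, then $\alpha$ does not appear in any path contributing to $\varphi(\beta)$, so $\Delta_\alpha(\varphi(\beta))=0$. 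Restricting the sum to the surviving indices gives
\[\der{\alpha}{\varphi(W)}=\sum_{\beta\in Q_1\setminus F_1}\Delta_\alpha(\varphi(\beta))\bullet\varphi(\der{\beta}{W})\]
for every $\alpha\in Q_1'\setminus F_1'$. Since cyclically equivalent potentials have equal cyclic derivatives, condition \ref{rightequiv3} replaces the left-hand side with $\der{\alpha}{W'}$. Therefore each generator $\der{\alpha}{W'}$ of the Jacobian ideal of $(Q',F',W')$ lies in the closed two-sided ideal generated by $\{\varphi(\der{\beta}{W}):\beta\in Q_1\setminus F_1\}$, showing that $\varphi$ sends the Jacobian ideal of $(Q,F,W)$ onto (a subset of) that of $(Q',F',W')$. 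Here I also want to note that $\varphi$ is continuous, so closures are preserved, which is a small but essential check.

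By \Cref{rem:inv-rightequiv}, $\varphi^{-1}$ is also a right equivalence between $(Q',F',W')$ and $(Q,F,W)$, so the same argument applied to $\varphi^{-1}$ yields the reverse containment of ideals. Consequently, $\varphi$ restricts to a bijection between the two Jacobian ideals and descends to an isomorphism $\frjac{Q}{F}{W}\isoto\frjac{Q'}{F'}{W'}$, as required.

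The only genuinely new point relative to \cite[Prop.~3.7]{derksenquivers1} is the vanishing argument $\Delta_\alpha(\varphi(\beta))=0$ for unfrozen $\alpha$ and frozen $\beta$; this is where condition \ref{rightequiv2} earns its keep. I expect this to be the main (and essentially the only) obstacle, since everything else follows the non-frozen template almost verbatim.
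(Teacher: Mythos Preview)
Your proposal is correct and follows essentially the same approach as the paper: apply the chain rule of \Cref{chainrule}, use condition~\ref{rightequiv2} to kill the terms with $\beta\in F_1$ when $\alpha$ is unfrozen, deduce one containment of Jacobian ideals, and then run the same argument with $\varphi^{-1}$ (via \Cref{rem:inv-rightequiv}) for the reverse containment. One small slip: the displayed chain-rule identity actually shows $J'\subseteq\varphi(J)$, not that $\varphi(J)$ lands inside $J'$ as you wrote; but since you then invoke $\varphi^{-1}$ for the other inclusion, the conclusion is unaffected.
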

\begin{proof}
By \Cref{chainrule}, for any unfrozen arrow $\alpha$ of $Q'$, we have
\[\der{\alpha}{\varphi(W)}=\sum_{\beta\in Q_1}\Delta_\alpha(\varphi(\beta))\bullet\varphi(\der{\beta}{W}).\]
Since $\varphi$ restricts to an isomorphism $\cpa{\KK}{F}\isoto\cpa{\KK}{F'}$, if $\beta\in F_1$ then no term of $\varphi(\beta)$ can include the unfrozen arrow $\alpha$, and so we have $\Delta_\alpha(\varphi(\beta))=0$. Thus we may instead write
\[\der{\alpha}{\varphi(W)}=\sum_{\beta\in Q_1^m}\Delta_\alpha(\varphi(\beta))\bullet\varphi(\der{\beta}{W}),\]
and see that
\[\close{\Span{\der{\alpha}{W'}:\alpha\in Q'_1\setminus F_1'}}=\close{\Span{\der{\alpha}{\varphi(W)}:\alpha\in Q_1'\setminus F_1'}}\subseteq\close{\Span{\varphi(\der{\beta}{W}):\beta\in Q_1\setminus F_1}},\]
with the equality coming from the cyclic equivalence of $W'$ and $\varphi(W)$. Applying the same argument to $\varphi^{-1}$, which is also a right equivalence (Remark~\ref{rem:inv-rightequiv}), we obtain the reverse inclusion, and the result follows.
\end{proof}

If $Q$ and $Q'$ are quivers sharing the same vertex set $Q_0$, we can define $Q\dsum Q'$ to be the quiver with vertex set $Q_0$ and arrows $Q_1\sqcup Q'_1$. If $F\subseteq Q$ and $F'\subseteq Q'$ are subquivers, then we write
\[(Q,F)\dsum(Q',F')=(Q\dsum Q',F\cup F'),\]
where $F\union F'$ is the subquiver with vertex set $F_0\union F'_0$ and arrow set $F_1\union F'_1$; note that while the second union is necessarily disjoint, because of the definition of $(Q\oplus Q')_1$, the first may not be. Finally, if $W$ and $W'$ are potentials on $Q$ and $Q'$ respectively, we can define
\[(Q,F,W)\dsum(Q',F',W')=(Q\dsum Q',F\union F',W+W').\]

\begin{defn}[cf.\ {\cite[Def.~4.3]{derksenquivers1}}]
An ice quiver with potential $(Q,F,W)$ is \emph{trivial} if $\frjac{Q}{F}{W}=\KK Q_0$.
\end{defn}

\begin{rem}
Just as in \cite[Prop.~4.4]{derksenquivers1}, trivial ice quivers with potential are, up to right equivalence, those in which $Q_1$ has exactly $2N$ arrows $\alpha_1,\beta_1,\dotsc,\alpha_N,\beta_N$, all unfrozen, such that $\alpha_i\beta_i$ is a $2$-cycle for all $i$, and $W=\sum_{i=1}^N\alpha_i\beta_i$.

Note that if we allowed the square of a loop to be a term of $W$, this statement would be false; for $Q$ consisting of a single vertex and a loop $\alpha$, taking $W=\alpha^2$ gives $\jac{Q}{W}=\powser{\KK}{\alpha}/\close{\Span{2\alpha}}\cong\KK$ provided $\Char{\KK}\ne2$.
\end{rem}

\begin{prop}
\label{p:dsum-triv}
Let $(Q,F,W)$ and $(Q',F',W')$ be ice quivers with potential such that $Q_0=Q'_0$ and $F_0=F'_0$. If $(Q',F',W')$ is trivial, then the canonical map $\cpa{\KK}{Q}\to\cpa{\KK}{Q\dsum Q'}$ induces an isomorphism
\[\frjac{Q}{F}{W}\isoto\frjac{Q\dsum Q'}{F\union F'}{W+W'}.\]
\end{prop}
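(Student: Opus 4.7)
The plan is to exhibit an inverse to the homomorphism $\phi\colon\frjac{Q}{F}{W}\to\frjac{Q\dsum Q'}{F\union F'}{W+W'}$ induced by the canonical inclusion $\iota\colon\cpa{\KK}{Q}\to\cpa{\KK}{Q\dsum Q'}$. The candidate inverse $\psi$ will be induced by the $S$-algebra map $p\colon\cpa{\KK}{Q\dsum Q'}\to\cpa{\KK}{Q}$ that is the identity on $\KK Q_1$ and sends every arrow in $Q'_1$ to zero.

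First I would check that $\phi$ and $\psi$ are both well defined. For $\phi$, the key point is that no arrow of $Q_1$ appears in $W'$, so $\der{\alpha}(W+W')=\der{\alpha}{W}$ for every $\alpha\in Q_1\setminus F_1$; consequently $\iota$ carries Jacobian generators to Jacobian generators. For $\psi$, one checks that the composition of $p$ with the quotient onto $\frjac{Q}{F}{W}$ annihilates every generator $\der{\alpha}(W+W')$ of the Jacobian ideal of the sum: those with $\alpha\in Q_1\setminus F_1$ are already relations of $(Q,F,W)$, while those with $\alpha\in Q'_1\setminus F'_1$ equal $\der{\alpha}{W'}$ and hence involve only arrows of $Q'_1$, so are killed by $p$.

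The identity $\psi\circ\phi=\id{\frjac{Q}{F}{W}}$ is immediate from $p\circ\iota=\id{\cpa{\KK}{Q}}$. Verifying that $\phi\circ\psi$ is the identity is the main obstacle: since $\iota\circ p$ agrees with the identity on $\cpa{\KK}{Q\dsum Q'}$ modulo the closed ideal $K$ generated by the arrows in $Q'_1$, it suffices to show that $K$ is contained in the Jacobian ideal of the sum. Here the triviality of $(Q',F',W')$ is decisive: the hypothesis $\frjac{Q'}{F'}{W'}=\KK Q'_0$ forces the Jacobian ideal of $(Q',F',W')$ to be the full arrow ideal of $\cpa{\KK}{Q'}$, so every $\alpha\in Q'_1$ admits a convergent expression in $\cpa{\KK}{Q'}$ as an element of the closed ideal generated by $\set{\der{\gamma}{W'}:\gamma\in Q'_1\setminus F'_1}$. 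The subtle point to verify is that this identity transfers along the natural inclusion $\cpa{\KK}{Q'}\hookrightarrow\cpa{\KK}{Q\dsum Q'}$, for which one uses that this inclusion is continuous (the powers of the arrow ideal of $\cpa{\KK}{Q'}$ sit inside the corresponding powers of the arrow ideal of $\cpa{\KK}{Q\dsum Q'}$), thereby placing $\alpha$ in the Jacobian ideal of the sum as required.
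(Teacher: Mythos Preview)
Your argument is correct and is essentially the approach that Derksen--Weyman--Zelevinsky use in \cite[Prop.~4.5]{derksenquivers1}, which the paper simply cites. The only remark the paper adds is that triviality of $(Q',F',W')$ forces $F'_1=\varnothing$; your proof does not need this observation explicitly, since you only ever use that the generators $\der{\gamma}W'$ for $\gamma\in Q'_1\setminus F'_1$ already lie among the generators of the Jacobian ideal of the sum.
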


\begin{proof}
The proof is exactly as in the case that $F$ is empty \cite[Prop.~4.5]{derksenquivers1}, noting for ease of comparison that the triviality of $(Q',F',W')$ implies that $F'_1=\varnothing$.
\end{proof}

Before proving Theorem~\ref{thm:reduction}, we give one more lemma, which provides a normal form for irredundant potentials, up to right equivalence.

\begin{lem}
\label{l:neat-potential}
Let $(Q,F,W)$ be an ice quiver with potential such that $W$ is irredundant. Then, up to replacing $W$ by a right equivalent potential, we have
\begin{equation}
\label{eq:neat-potential}
W=\sum_{i=1}^M\alpha_i\beta_i+\sum_{i=M+1}^N(\alpha_i\beta_i+\alpha_ip_i)
+W_1
\end{equation}
for some arrows $\alpha_i$ and $\beta_i$ and elements $p_i\in J^2$, where
\begin{enumerate}[label=(\roman*)]
\item $\alpha_i$ is unfrozen for all $1\leq i\leq N$, and $\beta_i$ is frozen if and only if $i>M$,
\item the arrows $\alpha_i$ and $\beta_i$ with $1\leq i\leq M$ each appear exactly once in the expression \eqref{eq:neat-potential},
\item the arrows $\beta_i$, for $1\leq i\leq N$, do not appear in any of the $p_j$, and
\item the arrows $\alpha_i$ and $\beta_i$, for $1\leq i\leq N$, do not appear in the potential $W_1$, and this potential has no degree $2$ terms.
\end{enumerate}
\end{lem}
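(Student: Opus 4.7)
The plan is to put $W$ in the stated normal form in two stages. First I normalize the degree-$2$ part of $W$ via a linear change of basis on the unfrozen arrows, and then I iteratively eliminate unwanted occurrences of the selected pairs $(\alpha_i,\beta_i)$ from the higher-degree terms by DWZ-style substitutions \cite[Thm.~4.6]{derksenquivers1}. Throughout I ensure that all substitutions are made only in unfrozen arrows, so the resulting algebra automorphism is a right equivalence in the sense of \Cref{d:right-equiv}. To begin, split $W = W^{(2)} + W^{(\geq 3)}$ by path length. Since a potential forbids loops at degree $2$ and $W$ is irredundant, $W^{(2)}$ is a sum of $2$-cycles between distinct vertices $v \neq w$, at least one of whose arrows is unfrozen. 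For each unordered pair $\{v,w\}$ the relevant coefficients assemble into a bilinear pairing on arrows $v \to w$ against arrows $w \to v$, whose frozen--frozen block vanishes by irredundancy. Row and column operations on the unfrozen arrows (together with translations of unfrozen arrows by frozen ones with matching endpoints, if needed) bring this pairing to a block-diagonal ``pivot'' form. Relabelling the pivots as $(\alpha_i,\beta_i)$ with $\alpha_i$ always unfrozen (swapping sides via cyclic equivalence when only one of the two arrows of a pair is frozen), and letting $1 \leq i \leq M$ index the pairs in which $\beta_i$ is also unfrozen, brings $W^{(2)}$ into the form $\sum_{i=1}^{N}\alpha_i\beta_i$, establishing condition~(i).

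For the second stage I remove unwanted occurrences of $\alpha_i$ and $\beta_i$ from $W^{(\geq 3)}$. For each $i \leq N$ the arrow $\alpha_i$ is unfrozen, so substitutions $\alpha_i \mapsto \alpha_i - r$ with $r \in J^2$ a path of matching endpoints define right equivalences preserving $\cpa{\KK}{F}$. Any cyclic term of $W^{(\geq 3)}$ containing $\beta_i$ can be rewritten cyclically as $r\beta_i$ for such an $r$, and applying the iterative substitution procedure of \cite[\S4]{derksenquivers1} to offenders of minimal degree clears $\beta_i$ entirely from $W^{(\geq 3)}$, yielding~(iii) and the $\beta_i$-part of~(iv). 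For $i \leq M$ the arrow $\beta_i$ is also unfrozen, and the analogous substitution in $\beta_i$ removes $\alpha_i$ from $W^{(\geq 3)}$, yielding~(ii) and completing~(iv). For $M < i \leq N$, however, $\beta_i$ is frozen, so the dual substitution is forbidden and $\alpha_i$ may persist in higher-degree terms; these surviving contributions, cyclically written as $\alpha_i p_i$ with $p_i \in J^2$, form the second sum in the statement, while whatever remains goes into $W_1$.

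The main obstacle is showing that the iterative substitutions of the second stage assemble into a single convergent right equivalence: each elementary substitution kills one offending term at minimal degree but may create new offending terms of strictly higher degree, and one needs the $J$-adic completeness of $\cpa{\KK}{Q}$ to pass to the limit. This convergence argument follows \cite[Thm.~4.6]{derksenquivers1} essentially verbatim; the one point of genuine departure is the frozen-subalgebra bookkeeping, which is automatic here because every substitution we perform is in an unfrozen arrow, so the limiting automorphism restricts to the identity on $\cpa{\KK}{F}$.
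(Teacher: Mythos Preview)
Your approach is essentially the paper's, with the two phases reordered: you normalise the degree-$2$ part first by a linear change of arrow basis and then run the DWZ iteration, whereas the paper first invokes \cite[Lem.~4.7]{derksenquivers1} from a rough preliminary form \eqref{eq:gen-potential} and only afterwards performs linear substitutions to enforce condition~(ii). Both routes rest on the same convergence mechanism.

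There is, however, a gap in your first stage. The operations you list---row and column operations on the unfrozen arrows, together with translations of unfrozen arrows by frozen ones---do not in general suffice to bring the degree-$2$ pairing to pivot form. For instance, if a single unfrozen arrow $\alpha\colon v\to w$ pairs with two parallel frozen arrows $\gamma_1,\gamma_2\colon w\to v$ via $W^{(2)}=\alpha\gamma_1+\alpha\gamma_2$, no operation on $\alpha$ alone separates the two terms. Worse, without separating them your second-stage substitutions in $\alpha$ can cycle rather than converge: with $\alpha_1=\alpha_2=\alpha$, the substitution $\alpha\mapsto\alpha-r$ that cancels an offender $r\gamma_1$ simultaneously creates a new offender $-r\gamma_2$ of the same degree, so the minimal offending degree never increases and the DWZ limit argument does not apply.

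The repair is that condition~\ref{rightequiv2} of \Cref{d:right-equiv} only requires $\varphi(\cpa{\KK}{F})=\cpa{\KK}{F}$, not that $\varphi$ fix $\cpa{\KK}{F}$ pointwise; hence right equivalences may also apply invertible linear maps \emph{among the frozen arrows}. In the example above, a suitable change of frozen basis collapses $W^{(2)}$ to a single $2$-cycle, after which $N=1$ and your second stage runs cleanly. In general this extra freedom, together with the moves you already allow, does suffice to diagonalise the degree-$2$ pairing so that the $\alpha_i$ and $\beta_i$ are $2N$ distinct arrows, and then the DWZ iteration converges exactly as you describe.
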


\begin{proof}
Up to cyclic equivalence and rescaling arrows, we have
\begin{equation}
\label{eq:gen-potential}
W=\sum_{i=1}^N(\alpha_i\beta_i+\alpha_ip_i+q_i\beta_i)+W_0
\end{equation}
for some $p_i,q_i\in J^2=\prod_{d=2}^\infty(\KK Q_1)^{\tens_Sd}$, such that the terms $\alpha_i\beta_i$ are the only $2$-cycles in $W$, and no term of $W_0$ contains $\alpha_i$ or $\beta_i$; cf.\ \cite[Eq.~4.6]{derksenquivers1}. It follows from our definition of an ice quiver with potential that none of the arrows $\alpha_i$ and $\beta_i$ appearing in $2$-cycles in $W$ are loops. We can label these arrows so that $\alpha_i$ is always unfrozen, and $\beta_i$ is unfrozen if and only if $1\leq i\leq M$ for some $M\leq N$, as required by (i). We can also arrange that the arrows $\beta_i$ do not appear in any of the $p_j$, as required by (iii)---to do this, we cyclically rotate any term of $\alpha_j p_j$ containing $\beta_i$ until it ends with this arrow, and relabel so that the rest of this term is incorporated into $q_i$ instead.

The proof of \cite[Lem.~4.7]{derksenquivers1} applies in our situation to show that, up to right equivalence, we may assume that $p_i=0$ whenever $\beta_i$ is unfrozen, and $q_i=0$ for all $i$. Indeed, this lemma can be used to construct a right equivalence $\varphi\colon\cpa{\KK}{Q}\isoto\cpa{\KK}{Q}$ which is the identity on vertices, all frozen arrows and all unfrozen arrows different from the $\alpha_i$ and $\beta_i$, and takes $W$ to a potential of the required form. The reader is warned that the various potential terms in \cite[Lem.~4.8]{derksenquivers1} should be relabelled as above to conform to our insistence that the $\beta_i$ do not appear in any of the $p_j$; this relabelling then affects the next term of the inductive sequence of right equivalences constructed in the proof of \cite[Lem.~4.7]{derksenquivers1}, whose limit is our desired equivalence $\varphi$.

After applying this equivalence, our potential has the form \eqref{eq:neat-potential}, and this expression satisfies conditions (i), (iii) and (iv). To also impose condition (ii), we apply further right equivalences as follows. First collect terms involving the arrow $\beta_1$, and write them as
\[\alpha_1\beta_1+\gamma\beta_1\]
for some linear combination of paths $\gamma$; indeed, because of our assumptions on the expression \eqref{eq:neat-potential}, $\gamma$ is even a sum of some of the unfrozen arrows $\alpha_i$ for $1\leq i\leq m$. Since $\alpha_1$ is unfrozen, there is a right equivalence $\varphi$ fixing all arrows except $\alpha_1$ and with $\varphi(\alpha_1)=\alpha_1-\gamma$. Since $\beta_1$ is not a loop, it cannot appear in $\gamma$. As a result, the right equivalent potential $\varphi(W)$ contains $\beta_1$ exactly once, and so after relabelling it still has an expression \eqref{eq:neat-potential} satisfying (i), (iii) and (iv). Note that since $\varphi(\alpha_1)$ does not involve any loops, nor can any degree $2$ term of $\varphi(W)$, and so this is a valid potential. Now we can collect the terms of $\varphi(W)$ involving $\alpha_1$, writing them as
\[\alpha_1\beta_1+\alpha_1\delta\]
for some linear combination $\delta$ of paths. Since $\beta_1$ is unfrozen, there is a right equivalence $\psi$ fixing all arrows different from $\beta_1$, and with $\psi(\beta_1)=\beta_1-\delta$. Since we already arranged that $\beta_1$ appears exactly once in the potential $\varphi(W)$, the right equivalent potential $\psi\varphi(W)$ is obtained by simply removing the terms $\alpha_1\delta$, so that in $\psi\varphi(W)$ the arrow $\alpha_1$ also appears exactly once, and this potential still has an expression of the form \eqref{eq:neat-potential} satisfying (i), (iii) and (iv). Now we can apply the same argument to the potential $\psi\varphi(W)-\alpha_1\beta_1$, which involves a strictly smaller number of the finitely many arrows of $Q$, and inductively obtain a potential of the required form.
\end{proof}

We are now ready to prove the main result of the section.

\begin{proof}[Proof of Theorem~\ref{thm:reduction}]
By Proposition~\ref{p:irredundant}, we may assume that $W$ is irredundant, and thus further assume that $W$ has an expression of the form \eqref{eq:neat-potential} satisfying conditions (i)--(iv) from Lemma~\ref{l:neat-potential}.

Take $Q_\triv$ to be the subquiver of $Q$ consisting of all vertices and the arrows $\alpha_i,\beta_i$ for $i\leq M$, and $W_\triv=\sum_{i=1}^M\alpha_i\beta_i$, so that $(Q_\triv,W_\triv)$ is trivial. Let $Q'$ be the subquiver of $Q$ consisting of all vertices and those arrows not included in $Q_\triv$, and $W'=W-W_\triv$; we have arranged things so that $W'$ does not involve any arrows of $Q_\triv$, and thus defines a potential on $Q'$. Then $(Q,F,W)=(Q',F,W')\dsum(Q_\triv,\varnothing,W_\triv)$, and hence $\frjac{Q}{F}{W}\cong\frjac{Q'}{F}{W'}$ by Proposition~\ref{p:dsum-triv}.

Thus to finish the proof, it is enough to find a reduced ice quiver with potential $(Q_\red,F_\red,W_\red)$ such that $\frjac{Q'}{F}{W'}\cong\frjac{Q_\red}{F_\red}{W_\red}$. Simplifying our expression for $W'$, and relabelling arrows for simplicity, we have
\[W'=\sum_{i=1}^K\alpha_i\beta_i+W_\red,\]
where each $\alpha_i$ is unfrozen, and each $\beta_i$ is frozen and does not appear in any term of $W_\red$---note that we used condition (iii) from Lemma~\ref{l:neat-potential} here.

Let $(Q_\red,F_\red)$ be the ice quiver obtained from $(Q',F)$ by deleting $\beta_i$ and freezing $\alpha_i$ for each $1\leq i\leq K$. Then, by construction, $(Q_\red,F_\red,W_\red)$ is reduced. We claim that the map $\varphi\colon\frjac{Q'}{F}{W'}\to\frjac{Q_\red}{F_\red}{W_\red}$ acting as the identity on vertices, and on arrows by
\[\varphi(\gamma)=\begin{cases}\gamma,&\text{$\gamma\ne\beta_i$ for any $1\leq i\leq K$},\\-\der{\alpha_i}{W_{\red}},&\gamma=\beta_i,\end{cases}\]
is an isomorphism.

First we check that $\varphi$ is well-defined. If $\gamma$ is unfrozen and not equal to $\alpha_i$ for any $i$, then
\[\varphi(\der{\gamma}{W'})=\varphi(\der{\gamma}{W_\red})=\der{\gamma}{W_\red}=0,\]
since $\beta_i$ does not appear in $W_\red$, and $\gamma$ is unfrozen in $Q'$. On the other hand,
\[\varphi(\der{\alpha_i}{W'})=\varphi(\beta_i+\der{\alpha_i}{W_\red})=-\der{\alpha_i}{W_\red}+\der{\alpha_i}{W_\red}=0.\]
To see that $\varphi$ is an isomorphism, let $\psi\colon\frjac{Q_\red}{F_\red}{W_\red}\to\frjac{Q'}{F}{W'}$ be the map acting as the identity on vertices and arrows. This is also well-defined, as for each unfrozen $\gamma$ in $Q_\red$ we have
\[\psi(\der{\gamma}{W_\red})=\der{\gamma}{W_\red}=\der{\gamma}{W'}=0,\]
as $\gamma$ is not one of the $\alpha_i$, which are arrows of $F_\red$. Moreover,
\[\psi(-\der{\alpha_i}{W_\red})=-\der{\alpha_i}{W_\red}=-\der{\alpha_i}{W'}+\beta_i=\beta_i\]
in $\frjac{Q'}{F}{W'}$, so $\psi$ and $\varphi$ are inverses.
\end{proof}

\begin{prop}
\label{p:reduction-uniqueness}
Let $(Q,F,W)$ be an irredundant ice quiver with potential. Then the ice quiver with potential $(Q_\red,F_\red,W_\red)$ from \Cref{thm:reduction} is uniquely determined up to right equivalence by the right equivalence class of $(Q,F,W)$.
\end{prop}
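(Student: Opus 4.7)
The plan is to extend the uniqueness half of Derksen--Weyman--Zelevinsky's splitting theorem \cite[Thm.~4.6]{derksenquivers1} to the frozen setting. Given a right equivalence $\varphi$ between irredundant ice quivers with potential $(Q,F,W)$ and $(Q',F',W')$, the goal is to produce a right equivalence between the reductions $(Q_\red,F_\red,W_\red)$ and $(Q'_\red,F'_\red,W'_\red)$ obtained from \Cref{thm:reduction}. The first move is to apply \Cref{l:neat-potential} to both sides and absorb the resulting right equivalences into $\varphi$ and $\varphi^{-1}$, so that we may assume $W$ and $W'$ are already in the normal form
\[W=\sum_{i=1}^M\alpha_i\beta_i+\sum_{i=M+1}^N(\alpha_i\beta_i+\alpha_ip_i)+W_1.\]
From such a normal form the reduction is explicit: delete the arrows $\alpha_i,\beta_i$ for $i\leq M$ and the arrows $\beta_i$ for $i>M$, freeze the arrows $\alpha_i$ for $i>M$, and take $W_\red$ to be $W$ with all $2$-cycle terms $\alpha_i\beta_i$ removed.

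The second step is to show that the integers $M$ and $N$ are intrinsic invariants of the right equivalence class, together with a matching of the arrows $\alpha_i,\beta_i$ with primed counterparts $\alpha'_i,\beta'_i$ modulo the square of the arrow ideal. Both counts can be read off from the degree-one part of the Jacobian relations of $\frjac{Q}{F}{W}$, which kill precisely the $\alpha_i,\beta_i$ for $i\leq M$ (all unfrozen) and the $\beta_i$ for $i>M$ (all frozen) in $J/J^2$. Since $\varphi$ preserves both the algebra structure of $\frjac{Q}{F}{W}$ and the frozen subalgebra $\cpa{\KK}{F}$, the counts of killed unfrozen and killed frozen arrows agree on both sides.

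The third step is to modify $\varphi$ by composition with further right equivalences so that the modified map sends each $\alpha_i\mapsto\alpha'_i$ and $\beta_i\mapsto\beta'_i$ on the nose. Once this is achieved, the linearity of cyclic equivalence yields that $\varphi(W_\red)$ is cyclically equivalent to $W'_\red$ by cancelling the matching $2$-cycle contributions on both sides, and the restriction of the modified $\varphi$ to $\cpa{\KK}{Q_\red}$ supplies the desired right equivalence. The matching is constructed via the inductive arrow-by-arrow strategy of the proof of \Cref{l:neat-potential}, correcting the action of $\varphi$ on the $2$-cycle arrows modulo successively higher powers of the arrow ideal and taking a limit in the $J$-adic topology.

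The main obstacle will be ensuring that each correction in this inductive construction preserves $\cpa{\KK}{F}$, so that it remains a right equivalence of ice quivers with potential. For the unfrozen arrows $\alpha_i$ this is unproblematic, since arbitrary changes of variable $\alpha_i\mapsto\alpha_i-\gamma$ are available. For the frozen arrows $\beta_i$, the key input is that $\varphi$ restricts to an isomorphism $\cpa{\KK}{F}\isoto\cpa{\KK}{F'}$; consequently the discrepancy $\varphi(\beta_i)-\beta'_i$ lies automatically in $\cpa{\KK}{F'}\cap J^2$, and the inverse correction $\beta_i\mapsto\beta_i-\delta$ with $\delta\in\cpa{\KK}{F}\cap J^2$ defines a right equivalence of ice quivers with potential by acting trivially on all other arrows.
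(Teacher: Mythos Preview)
Your approach differs from the paper's. The paper does not try to construct the right equivalence between reductions directly; instead it invokes a cancellation principle modelled on \cite[Prop.~4.9]{derksenquivers1}: if two \emph{reduced} ice quivers with potential become right equivalent after direct-summing the same trivial ice quiver with potential, then they were already right equivalent. Since the trivial summand $(Q_\triv,\varnothing,W_\triv)$ arising in the proof of \Cref{thm:reduction} is determined by the right equivalence class of $(Q,F,W)$, uniqueness of $(Q_\red,F_\red,W_\red)$ follows by cancellation. The paper notes that the proof of \cite[Prop.~4.9]{derksenquivers1} carries over, the auxiliary lemmas (such as \cite[Lem.~4.11]{derksenquivers1}) being applied only in cases where the relevant arrows are unfrozen.

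Your direct strategy has a genuine gap at the last step. Suppose you have succeeded in modifying $\varphi$ so that $\varphi(\alpha_i)=\alpha'_i$ and $\varphi(\beta_i)=\beta'_i$ exactly. You then claim that $\varphi$ restricts to an isomorphism $\cpa{\KK}{Q_\red}\to\cpa{\KK}{Q'_\red}$, but there is no reason this should hold: for any other arrow $\gamma$ of $Q_\red$ the element $\varphi(\gamma)\in\cpa{\KK}{Q'}$ may involve the arrows $\alpha'_i,\beta'_i$ (for $i\leq M$) or $\beta'_i$ (for $i>M$) that are absent from $Q'_\red$. Your Step~3 corrects $\varphi$ only on the $2$-cycle arrows themselves, not on the remaining arrows. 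Consequently $\varphi(W_\red)$ need not even lie in $\cpa{\KK}{Q'_\red}$, so the cyclic equivalence you deduce lives in the wrong algebra. A second issue is the frozen condition: since $F_\red$ contains the newly frozen $\alpha_i$ for $i>M$, which were unfrozen in $F$, the hypothesis $\varphi(\cpa{\KK}{F})=\cpa{\KK}{F'}$ does not imply $\varphi(\cpa{\KK}{F_\red})=\cpa{\KK}{F'_\red}$. Decoupling the reduced part from the trivial part inside a given right equivalence is exactly the content of \cite[Prop.~4.9]{derksenquivers1}, and its proof requires substantially more than the arrow-matching you outline.
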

\begin{proof}
As in \cite[Prop.~4.9]{derksenquivers1}, if $(Q',F',W')$ and $(Q'',F'',W'')$ are reduced and $(Q_\triv,\varnothing,W_\triv)$ is a trivial ice quiver with potential such that $(Q'\oplus Q_\triv,F',W'+W_\triv)$ is right equivalent to $(Q''\oplus Q_\triv,F'',W''+W_\triv)$, then $(Q',F',W')$ is right equivalent to $(Q'',F'',W'')$. Indeed, the proof of this proposition goes through without change in our more general setting, with the key lemmas in fact now being more general than we need---for example, \cite[Lem.~4.11]{derksenquivers1} is only used in the case that $b_k=0$ whenever $a_k$ is a frozen arrow.

Now let $(Q,F,W)$ be an irredundant ice quiver with potential. Then, as in \cite{derksenquivers1}, it is clear from the construction that the trivial quiver with potential $(Q_\triv,\varnothing,W_\triv)$ from the proof of \Cref{thm:reduction} is determined up to right equivalence by the right equivalence class of $(Q,F,W)$, and then the statement of the previous paragraph implies that the same is true of $(Q_\red,F_\red,W_\red)$.
\end{proof}

\begin{defn}
When $(Q,F,W)$ is an irredundant ice quiver with potential, bearing in mind \Cref{p:reduction-uniqueness}, we call $(Q_\red,F_\red,W_\red)$ from \Cref{thm:reduction} the \emph{reduction} of $(Q,F,W)$.
\end{defn}

\begin{eg}
\label{eg:reduction}
We give a simple example of reduction, illustrating the additional feature appearing in the case of ice quivers. Consider the ice quiver
\[(Q,F)=\begin{tikzpicture}[baseline=0,scale=1.3]
\node at (0,0.5) (1) {$\boxed{1}$};
\node at (2,0.5) (3) {$\boxed{3}$};
\node at (1,-0.5) (2) {$2$};
\path[-angle 90,font=\scriptsize]
	(2) edge node[below left] {$\gamma_1$} (1)
	(3) edge node[below right] {$\gamma_2$} (2)
	(1) edge[bend right=20] node[below] {$\gamma_3$} (3);
\path[\frozen,-angle 90,font=\scriptsize]
	(3) edge[bend right=20] node[above] {$\gamma_4$} (1);
\end{tikzpicture}\]
in which $F$ consists of the boxed vertices $1$ and $3$ and the arrow $\gamma_3$, with potential $W=\gamma_1\gamma_2\gamma_3+\gamma_3\gamma_4$. This ice quiver with potential is not reduced; while $W$ is irredundant, the relation
\[\der{\gamma_3}{W}=\gamma_1\gamma_2+\gamma_4\]
includes a term consisting of a single arrow, and hence is not admissible. Following the proof of Theorem~\ref{thm:reduction}, we rewrite
\[W=\gamma_3\gamma_4+\gamma_3\gamma_1\gamma_2.\]
This expression satisfies the conditions of Lemma~\ref{l:neat-potential}; in the notation of \eqref{eq:neat-potential}, $N=1$, and we have $\alpha_1=\gamma_3$, $\beta_1=\gamma_4$, $p_1=\gamma_1\gamma_2$, and $q_1=W_0=0$. Since $\beta_1=\gamma_4$ is frozen, we do not require that $p_1=0$.

Since there are no $2$-cycles in $W$ consisting only of unfrozen arrows, the quiver $(Q',F,W')$ constructed in the proof of Theorem~\ref{thm:reduction} is just $(Q,F,W)$ as above; that is, there is no trivial part to split off. The proof then tells us that $(Q,F,W)$ has frozen Jacobian algebra isomorphic to that of the reduced ice quiver with potential $(Q_\red,F_\red,W_\red)$ where $W_\red=\gamma_3\gamma_1\gamma_2$ (the term of $W$ not given by a $2$-cycle), and $(Q_\red,F_\red)$ is obtained by deleting $\gamma_4$, the frozen arrow appearing in the $2$-cycle in $W$, and freezing $\gamma_3$, the unfrozen arrow in this term. That is,
\[(Q_\red,F_\red)=\mathord{\begin{tikzpicture}[baseline=0,scale=1.3]
\node at (0,0.5) (1) {$\boxed{1}$};
\node at (2,0.5) (3) {$\boxed{3}$};
\node at (1,-0.5) (2) {$2$};
\path[-angle 90,font=\scriptsize]
	(2) edge node[below left] {$\gamma_1$} (1)
	(3) edge node[below right] {$\gamma_2$} (2);
\path[\frozen,-angle 90,font=\scriptsize]
	(1) edge node[above] {$\gamma_3$} (3);
\end{tikzpicture}}\]
The reader may readily check that there is an isomorphism $\frjac{Q}{F}{W}\isoto\frjac{Q_\red}{F_\red}{W_\red}$, given by the recipe in the proof of Theorem~\ref{thm:reduction}.
\end{eg}

\section{Mutation}
\label{s:mutation}

In this section, we discuss how ice quivers with potential transform under a local move at a mutable vertex, called a \emph{mutation}. Unlike the operations in Section~\ref{s:reduction}, this does not leave the isomorphism class of the Jacobian algebra invariant in general, even in the case that $F=\varnothing$. The name is chosen because of the connection to mutation in cluster algebras, which we discuss in more detail below, and corresponds to the physical operation of Seiberg duality \cites{berensteinseiberg,mukhopadhyayseiberg} (sometimes called urban renewal) on the dimer models discussed in Example~\ref{e:dimers1}; see for example \cite{vitoriamutations}. As in Section~\ref{s:reduction}, many of the arguments in this section carry over essentially without change from those in \cite{derksenquivers1} for the unfrozen case, so we focus on pointing out where modifications are necessary.

\begin{defn}
\label{iqpmutation}
Let $(Q,F,W)$ be an irredundant ice quiver with potential, and let $v\in Q_0\setminus F_0$ be a mutable vertex such that no loops or $2$-cycles of $Q$ are incident with $v$. Then the ice quiver with potential $\mu_v(Q,F,W)=(\mu_vQ,\mu_vF,\mu_vW)$, called the \emph{mutation} of $(Q,F,W)$ at $v$, is the output of the following procedure.
\begin{enumerate}[label=(\roman*)]
\item\label{QPm1} For each pair of arrows $\alpha\colon u\to v$ and $\beta\colon v\to w$, add an unfrozen `composite' arrow $[\beta\alpha]\colon u\to w$ to $Q$. Since $v$ is not incident with loops or $2$-cycles, $[\beta\alpha]$ cannot be a loop.
\item\label{QPm2} Replace each arrow $\alpha\colon u\to v$ by an arrow $\alpha^*\colon v\to u$, and each arrow $\beta\colon v\to w$ by an arrow $\beta^*\colon w\to v$; these arrows are necessarily unfrozen, since $v$ is.
\item\label{QPm3} Pick a representative of $W$ in $\cpa{\KK}{Q}$ such that no term of $W$ begins at $v$ (which is possible since there are no loops at $v$). For each pair of arrows $\alpha,\beta$ as in \ref{QPm1}, replace each occurrence of $\beta\alpha$ in $W$ by $[\beta\alpha]$, and add the term $[\beta\alpha]\alpha^\vee\beta^\vee$---each term of the resulting potential still has degree at least $2$, since $\beta\alpha$ cannot be a $2$-cycle, and no degree $2$ terms involve loops, since $[\beta\alpha]$ is not a loop. This potential is also irredundant, since the arrows $[\beta\alpha]$ are unfrozen, but it need not be reduced even if $(Q,F,W)$ is.
\item\label{QPm4} Replace the resulting ice quiver with potential by its reduction, as in Theorem~\ref{thm:reduction}, this being unique up to right equivalence by \Cref{p:reduction-uniqueness}.
\end{enumerate}
It will sometimes be useful to consider the ice quiver with potential $\lift{\mu}_v(Q,F,W)=(\lift{\mu}_vQ,F,\lift{\mu}_vW)$ obtained after steps \ref{QPm1}--\ref{QPm3}, i.e.\ before taking the reduction. Note that $\lift{\mu}_v(Q,F,W)$ and $\mu_v(Q,F,W)$ define isomorphic frozen Jacobian algebras, by Theorem~\ref{thm:reduction}. When $F=\varnothing$, this definition of mutation agrees with that given by Derksen--Weyman--Zelevinsky \cite[\S5]{derksenquivers1}, by \Cref{r:dwz-comparison}. The quiver $\mu_vQ$ may have $2$-cycles, even if $Q$ did not, although these cannot be incident with $v$.
\end{defn}

For any ice quiver with potential $(Q,F,W)$, let $(\stab{Q},\stab{W})$ be the quiver with potential in which $\stab{Q}$ is the full subquiver of $Q$ on the mutable vertices, and $\stab{W}$ is the image of $W$ under the canonical quotient map $\cpa{\KK}{Q}/\close{\{\cpa{\KK}{Q},\cpa{\KK}{Q}\}}\to\cpa{\KK}{\stab{Q}}/\close{\{\cpa{\KK}{\stab{Q}},\cpa{\KK}{\stab{Q}}\}}$. If $v$ is a mutable vertex of $Q$, and hence also a vertex of $\stab{Q}$, it is immediate from the definition that $(\stab{\lift{\mu}_vQ},\stab{\lift{\mu}_vQ})=\lift{\mu}_v(\stab{Q},\stab{W})$. Bearing in mind \Cref{r:dwz-comparison}, we even have $(\stab{\mu_vQ},\stab{\mu_vW})=\mu_v(\stab{Q},\stab{W})$.

We now study further properties of the operations $\lift{\mu}_v$ and $\mu_v$. Since $\lift{\mu}_v$ does not affect the frozen subquiver (and agrees with $\mu_v$ after taking frozen Jacobian algebras), we will be able to import even more arguments directly from \cite{derksenquivers1} than in \Cref{s:reduction}. For example, we have the following.

\begin{prop}[cf.~{\cite[Cor.~5.4]{derksenquivers1}}]
Let $(Q,F,W)$ be an ice quiver with potential and $v\in Q_0\setminus F_0$ a mutable vertex not incident with any loops or $2$-cycles of $Q$. Then the right equivalence class of $\mu_v(Q,F,W)$ is determined by that of $(Q,F,W)$.
\end{prop}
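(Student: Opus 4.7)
The plan is to factor the mutation as $\mu_v = r \circ \lift{\mu}_v$, where $r$ denotes the reduction procedure of \Cref{thm:reduction}. Since \Cref{p:reduction-uniqueness} already shows that reduction is well-defined up to right equivalence on irredundant ice quivers with potential, and the output of premutation is irredundant by construction (the composite arrows introduced in step \ref{QPm1} are unfrozen), it suffices to establish that the premutation $\lift{\mu}_v$ takes right equivalent ice quivers with potential to right equivalent ones.

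First I would take a right equivalence $\varphi \colon \cpa{\KK}{Q} \isoto \cpa{\KK}{Q'}$ of $(Q,F,W)$ and $(Q',F',W')$ (so in particular $Q_0 = Q'_0$ and $F_0 = F'_0$), and construct an induced algebra homomorphism $\lift{\varphi} \colon \cpa{\KK}{\lift{\mu}_v Q} \to \cpa{\KK}{\lift{\mu}_v Q'}$. Following the corresponding argument of Derksen--Weyman--Zelevinsky in their proof of \cite[Prop.~5.1]{derksenquivers1}, I would set $\lift{\varphi}$ to be the identity on vertices, to agree with $\varphi$ on arrows not incident with $v$, and to act on the new arrows $\alpha^*$, $\beta^*$, and $[\beta\alpha]$ by the formulas they give (expressing these images in terms of $\varphi$ applied to the original arrows at $v$). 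The bulk of the work is then to verify that $\lift{\varphi}$ is an isomorphism and that $\lift{\varphi}(\lift{\mu}_v W)$ is cyclically equivalent to $\lift{\mu}_v W'$; since these checks do not touch the frozen subquiver and our premutation agrees with DWZ's operation when $F=\varnothing$ (by \Cref{r:dwz-comparison}), the arguments in loc.\ cit.\ apply essentially verbatim.

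The only additional verification required in our setting is that $\lift{\varphi}$ respects the frozen subquivers, i.e., that $\lift{\varphi}(\cpa{\KK}{\lift{\mu}_v F}) = \cpa{\KK}{\lift{\mu}_v F'}$. Since $v$ is mutable and the frozen subquiver $F$ has all its vertices in $F_0$, no frozen arrow of $Q$ or $Q'$ is incident with $v$; hence premutation touches none of the frozen data and $\lift{\mu}_v F = F$ and $\lift{\mu}_v F' = F'$. As $\lift{\varphi}$ agrees with $\varphi$ on all arrows not incident with $v$, and in particular on all frozen arrows, the required identity follows immediately from condition \ref{rightequiv2} of \Cref{d:right-equiv} applied to $\varphi$.

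The main obstacle, such as it is, is to confirm that DWZ's definition of $\lift{\varphi}$ on the new arrows $\alpha^*, \beta^*, [\beta\alpha]$ does not accidentally send frozen arrows to something inappropriate---but since these new arrows are all themselves unfrozen, the definition only ever concerns unfrozen targets, and no compatibility problem arises. Once $\lift{\varphi}$ is in hand, applying \Cref{p:reduction-uniqueness} to the right equivalent irredundant ice quivers with potential $\lift{\mu}_v(Q,F,W)$ and $\lift{\mu}_v(Q',F',W')$ yields that their reductions $\mu_v(Q,F,W)$ and $\mu_v(Q',F',W')$ are right equivalent, as required.
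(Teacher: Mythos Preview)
Your proposal is correct and follows essentially the same approach as the paper: both factor $\mu_v$ as reduction after premutation, invoke the DWZ argument (local at $v$) to show $\lift{\mu}_v$ preserves right equivalence, observe that the constructed equivalence respects the frozen subquiver because no frozen arrow is incident with the mutable vertex $v$, and then apply \Cref{p:reduction-uniqueness}. The only discrepancy is bibliographic---the paper cites \cite[Thm.~5.2]{derksenquivers1} rather than Prop.~5.1 for the premutation step---but the mathematical content is identical.
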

\begin{proof}
The proof of \cite[Thm.~5.2]{derksenquivers1}, for the case that $F=\varnothing$, is local to the mutable vertex $v$, and so applies equally well in this case to show that the right equivalence class of $\lift{\mu}_v(Q,F,W)$ is determined by that of $(Q,F,W)$; in particular, the right equivalences of $(\lift{\mu}_vQ,\lift{\mu}_vW)$ constructed in this argument are also right equivalences of $(\lift{\mu}_vQ,\lift{\mu}_vF,\lift{\mu}_vW)$, since there are no arrows of $F$ incident with $v$. The result then follows from \Cref{p:reduction-uniqueness}.
\end{proof}

\begin{thm}[cf.~{\cite[Thm.~5.7]{derksenquivers1}}]
\label{thm:mu-involution}
Let $(Q,F,W)$ be a reduced ice quiver with potential and $v\in Q_0\setminus F_0$ a mutable vertex not incident with any $2$-cycles. Then $\mu_v^2(Q,F,W)$ is right-equivalent to $(Q,F,W)$.
\end{thm}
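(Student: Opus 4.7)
The plan is to reduce the statement to the ordinary quiver-with-potential case treated by Derksen--Weyman--Zelevinsky, using the fact that the mutation operation at a mutable vertex $v$ is entirely local to $v$ and involves only unfrozen arrows. First I would observe that, since $v \in Q_0 \setminus F_0$, no frozen arrow can be incident with $v$: an arrow of $F_1$ must have both endpoints in $F_0$. Consequently, all arrows incident with $v$ (which are precisely the arrows deleted, reversed, or composed in steps \ref{QPm1}--\ref{QPm3} of Definition~\ref{iqpmutation}) are unfrozen, and the new arrows $[\beta\alpha]$, $\alpha^*$, $\beta^*$ are unfrozen by definition. Thus the lifted mutation $\lift{\mu}_v$ leaves the frozen subquiver $F$ pointwise fixed, and the substitution $\beta\alpha \mapsto [\beta\alpha]$ in the potential only rewrites terms built from unfrozen arrows. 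In particular, $\lift{\mu}_v^2(Q,F,W)$ agrees with the quiver with potential $\lift{\mu}_v^2(\stab{Q},\stab{W})$ on the mutable part, with the frozen subquiver $F$ attached unchanged.

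Next, I would invoke the proof of \cite[Thm.~5.7]{derksenquivers1}, which constructs an explicit right equivalence showing that $\lift{\mu}_v^2(\stab{Q},\stab{W})$ is the direct sum of $(\stab{Q},\stab{W})$ with a trivial quiver with potential supported on the arrows incident with $v$ and their doubles. The right equivalence they build acts as the identity on all arrows not incident with $v$, and, on the arrows adjacent to $v$, interchanges each $\alpha^{**}$ with (a modification of) the original $\alpha$. Since none of the arrows touched lie in $F_1$, this map preserves $\cpa{\KK}{F}$ pointwise, and hence is a right equivalence of ice quivers with potential in the sense of \Cref{d:right-equiv}. Therefore $\lift{\mu}_v^2(Q,F,W)$ is right-equivalent to the direct sum
\[
(Q,F,W) \oplus (Q_\triv, \varnothing, W_\triv)
\]
for a trivial ice quiver with potential $(Q_\triv,\varnothing,W_\triv)$ whose arrows are all unfrozen.

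Finally, by \Cref{p:dsum-triv} this direct sum has the same frozen Jacobian algebra as $(Q,F,W)$, and more importantly its reduction, in the sense of \Cref{thm:reduction}, coincides with $(Q,F,W)$ up to right equivalence. Indeed $(Q,F,W)$ is already reduced by hypothesis, so after splitting off the trivial summand $(Q_\triv,\varnothing,W_\triv)$ we are left with a reduced ice quiver with potential right-equivalent to $(Q,F,W)$; \Cref{p:reduction-uniqueness} then identifies $\mu_v^2(Q,F,W) = (\lift{\mu}_v^2(Q,F,W))_\red$ with $(Q,F,W)$ up to right equivalence. The main obstacle in this plan is verifying that the explicit isomorphism in \cite[Thm.~5.7]{derksenquivers1} genuinely avoids interacting with $F_1$; but this is automatic because the formulas only involve arrows adjacent to $v$, and the inductive right equivalences used to bring $\lift{\mu}_v^2 W$ into normal form modify only the unfrozen arrows $\alpha, \alpha^{**}$, so condition \ref{rightequiv2} of \Cref{d:right-equiv} holds trivially throughout.
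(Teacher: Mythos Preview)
Your overall strategy---apply the Derksen--Weyman--Zelevinsky argument to the underlying quiver with potential, then check that the right equivalences they construct preserve the frozen subalgebra---is correct and is exactly what the paper does. However, there are two inaccuracies in your execution that you should fix.

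First, the detour through $(\stab{Q},\stab{W})$ is misleading and, as written, does not work. Arrows $\alpha\colon u\to v$ with $u\in F_0$ are unfrozen but do not lie in $\stab{Q}$, so $\lift{\mu}_v^2(\stab{Q},\stab{W})$ fails to see the corresponding composite arrows $[\beta\alpha]$, $[\alpha^*\beta^*]$ and reversed arrows $\alpha^*$. Your sentence ``$\lift{\mu}_v^2(Q,F,W)$ agrees with $\lift{\mu}_v^2(\stab{Q},\stab{W})$ on the mutable part, with the frozen subquiver $F$ attached unchanged'' does not give a decomposition of $\lift{\mu}_v^2(Q,F,W)$, because the unfrozen arrows between $Q_0\setminus F_0$ and $F_0$ belong to neither piece. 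The correct move, as in the paper, is to apply \cite[Thm.~5.7]{derksenquivers1} directly to the ordinary quiver with potential $(Q,W)$, not to its stable part.

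Second, your description of the DWZ right equivalences is incomplete: they do \emph{not} act as the identity on all arrows away from $v$. They also modify the composite arrows $[\beta\alpha]$ and $[\alpha^*\beta^*]$, which are not incident with $v$ and may even have both endpoints in $F_0$. This does not harm your conclusion, since these arrows are unfrozen by construction, so condition~\ref{rightequiv2} still holds; but you should state accurately which arrows are touched. Once these two points are corrected, your argument coincides with the paper's, including the final appeal to \Cref{p:reduction-uniqueness}.
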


\begin{proof}
The proof of \cite[Thm.~5.7]{derksenquivers1} also applies here, as follows. The quiver $\lift{\mu}_v^2Q$ differs from $Q$ only by the addition of a $2$-cycle, consisting of unfrozen arrows $[\beta\alpha]\colon u\to w$ and $[\alpha^*\beta^*]\colon w\to u$, for each pair of arrows $\alpha\colon u\to v$ and $\beta\colon v\to w$ of $Q$. (Formally, we also make the identifications $(\alpha^*)^*=\alpha$ and $(\beta^*)^*=\beta$ for these arrows.) Moreover, the frozen subquiver of $\lift{\mu}^2_v(Q,F,W)$ is $F$ by definition, and 
\[\lift{\mu}_v^2W=[W]+\sum_{\substack{\alpha\colon u\to v\\\beta\colon v\to w}}([\beta\alpha]+\beta\alpha)[\alpha^*\beta^*],\]
where $[W]$ is obtained from $W$ by replacing each occurrence of a path $\beta\alpha$ through $v$ by $[\beta\alpha]$, as in \Cref{iqpmutation}\ref{QPm3} (cf.~\cite[Eq.~5.20]{derksenquivers1}).

Now let $Q_\triv$ be the subquiver of $Q$ consisting of all vertices and the arrows $[\beta\alpha]$ and $[\alpha^*\beta^*]$, and
\[W_\triv=\sum_{\substack{\alpha\colon u\to v\\\beta\colon v\to w}}[\beta\alpha][\alpha^*\beta^*],\]
so that $(Q_\triv,\varnothing,W_\triv)$ is a trivial ice quiver with potential. To conclude the argument, one can show exactly as in \cite[Thm.~5.7]{derksenquivers1} that $\lift{\mu}_v^2(Q,F,W)$ is right equivalent to $(Q,F,W)\oplus(Q_\triv,\varnothing,W_\triv)$. Indeed, the three right equivalences of ordinary quivers with potential constructed by this argument act as the identity on all arrows of $\lift{\mu}_v^2Q$ except those incident with $v$ or of the form $[\beta\alpha]$ or $[\alpha^*\beta^*]$, all of which are unfrozen. Since $(Q,F,W)$ is reduced by assumption, it follows from \Cref{p:reduction-uniqueness} that $\mu_v^2(Q,F,W)=(Q,F,W)$, as required.
\end{proof}

We now compare mutation of ice quivers with potential to the combinatorial process of Fomin--Zelevinsky mutation (see \cite[Def.~4.2]{fomincluster1} for the original definition in terms of matrices, or \cite[\S3.2]{kellercluster} in the language of quivers). First we recall this procedure, while also extending it slightly to cover the situation of ice quivers $(Q,F)$ which have arrows between their frozen vertices.

\begin{defn}
\label{fzmutation}
Let $(Q,F)$ be an ice quiver, and let $v\in Q_0\setminus F_0$ be a mutable vertex not incident with any loops or $2$-cycles. Then the \emph{extended Fomin--Zelevinsky mutation} $\mu^\FZ_v(Q,F)=(\mu^\FZ_vQ,\mu^\FZ_vF)$ of $(Q,F)$ at $v$ is defined to be the output of the following procedure.
\begin{enumerate}[label=(\roman*)]
\item\label{FZm1} For each pair of arrows $\alpha\colon u\to v$ and $\beta\colon v\to w$, add an unfrozen arrow $[\beta\alpha]\colon u\to w$ to $Q$.
\item\label{FZm2} Replace each arrow $\alpha\colon u\to v$ by an arrow $\alpha^*\colon v\to u$, and each arrow $\beta\colon v\to w$ by an arrow $\beta^*\colon w\to v$.
\item\label{FZm3} Remove a maximal collection of unfrozen $2$-cycles, i.e.\ $2$-cycles avoiding the subquiver $F$.
\item\label{FZm4} Choose a maximal collection of half-frozen $2$-cycles, i.e.\ $2$-cycles in which precisely one arrow is frozen. Replace each $2$-cycle in this collection by a frozen arrow, in the direction of the unfrozen arrow in the $2$-cycle.
\end{enumerate}
Note that, because of the choices involved in steps \ref{FZm3} and \ref{FZm4}, this operation is only defined up to quiver isomorphism. If we ignore all arrows between frozen vertices, as is typical in cluster theory, then step \ref{FZm4} has no effect, and we obtain the usual (unextended) definition of Fomin--Zelevinsky mutation.
\end{defn}

\begin{eg}
The rules for removing $2$-cycles in steps \ref{FZm3} and \ref{FZm4} of \Cref{fzmutation} appear naturally as equivalences of dimer models on surfaces with boundary, where they correspond to integrating out massive terms---on the bipartite graph, this amounts to removing a bivalent vertex and, if this vertex is not incident with a half-edge, merging the two adjacent vertices. This has the effect on the quiver of removing the $2$-cycle around this vertex when both arrows in this cycle are unfrozen, and replacing it by a frozen arrow (with the predicted orientation) if one of the arrows is frozen; see \Cref{fig:massive-terms} (cf.\ \cite[Lem.~12.1]{baurdimer}).
\begin{figure}[h]
$\mathord{\begin{tikzpicture}[baseline={(current bounding box.center)}]
\node at (-1,0) (lf) {};
\node at (1,0) (rf) {};
\path
	(0,0.75) edge (0,0) edge (0,-1)
	(0,0.75) edge (-1,1.75)
	(0,0.75) edge (0,1.75)
	(0,0.75) edge (1,1.75)
	(0,-0.75) edge (-1,-1.75)
	(0,-0.75) edge (0,-1.75)
	(0,-0.75) edge (1,-1.75);
\draw[black,fill=black,radius=3pt] (0,0) circle;
\draw[black,fill=white,radius=3pt] (0,0.75) circle;
\draw[black,fill=white,radius=3pt] (0,-0.75) circle;
\path[-angle 90]
	(rf) edge [bend right] (lf)
	(lf) edge [bend right] (rf)
	(lf) edge (-0.25,1.5)
	(0.25,1.5) edge (rf)
	(rf) edge (0.25,-1.5)
	(-0.25,-1.5) edge (lf);
\path[dotted]
	(-0.25,1.5) edge (0.25,1.5)
	(-0.25,-1.5) edge (0.25,-1.5);
\end{tikzpicture}}\quad\mapsto\quad
\mathord{\begin{tikzpicture}[baseline={(current bounding box.center)}]
\node at (-1,0) (lf) {};
\node at (1,0) (rf) {};
\path
	(0,0) edge (-1,1)
	(0,0) edge (0,1)
	(0,0) edge (1,1)
	(0,0) edge (-1,-1)
	(0,0) edge (0,-1)
	(0,0) edge (1,-1);
\draw[black,fill=white,radius=3pt] (0,0) circle;
\path[-angle 90]
	(lf) edge (-0.25,0.75)
	(0.25,0.75) edge (rf)
	(rf) edge (0.25,-0.75)
	(-0.25,-0.75) edge (lf);
\path[dotted]
	(-0.25,0.75) edge (0.25,0.75)
	(-0.25,-0.75) edge (0.25,-0.75);
\end{tikzpicture}}\qquad\qquad
\mathord{\begin{tikzpicture}[baseline={(current bounding box.center)}]
\node at (-1,0) (lf) {};
\node at (1,0) (rf) {};
\path
	(0,0.75) edge (0,0) edge (0,-0.75)
	(0,0.75) edge (-1,1.75)
	(0,0.75) edge (0,1.75)
	(0,0.75) edge (1,1.75);
\draw[black,fill=black,radius=3pt] (0,0) circle;
\draw[black,fill=white,radius=3pt] (0,0.75) circle;
\path[-angle 90]
	(rf) edge [bend right] (lf)
	(lf) edge (-0.25,1.5)
	(0.25,1.5) edge (rf);
\path[-angle 90,dashed]
	(lf) edge [bend right] (rf);
\path[dotted]
	(-0.25,1.5) edge (0.25,1.5);
\path[ultra thick,\bdrycolor]
	(-1.25,-0.75) edge (1.25,-0.75);
\end{tikzpicture}}\quad\mapsto\quad
\mathord{\begin{tikzpicture}[baseline={(current bounding box.center)}]
\node at (-1,0) (lf) {};
\node at (1,0) (rf) {};
\path
	(0,0.75) edge (0,0) edge (0,-0.75)
	(0,0) edge (-1,1)
	(0,0) edge (0,1)
	(0,0) edge (1,1);
\draw[black,fill=white,radius=3pt] (0,0) circle;
\path[-angle 90]
	(lf) edge (-0.25,0.75)
	(0.25,0.75) edge (rf);
\path[-angle 90,dashed]
	(rf) edge [bend left] (lf);
\path[dotted]
	(-0.25,0.75) edge (0.25,0.75);
\path[ultra thick,\bdrycolor]
	(-1.25,-0.75) edge (1.25,-0.75);
\end{tikzpicture}}$
\caption{\label{fig:massive-terms} Removing bivalent vertices from a dimer model, either in the interior (left) or at the boundary (right), and the effect on the dual ice quiver. (The colours of the vertices are not important, and can be swapped, causing the orientations of arrows to be reversed.)}
\end{figure}
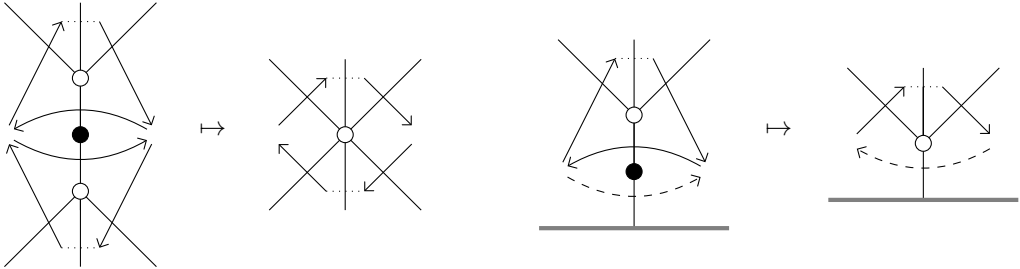

\end{eg}

Our goal now is to understand conditions on a potential $W$ such that the ice quiver $(\mu_vQ,\mu_vF)$ of $\mu_v(Q,F,W)$ coincides with the extended Fomin--Zelevinsky mutation $\mu_v^\FZ(Q,F)$. Since the first two steps of the two mutation procedures are the same, we need only decide when \Cref{iqpmutation}\ref{QPm4}, the reduction step, induces the required cancellation of $2$-cycles in \Cref{fzmutation}\ref{FZm3}--\ref{FZm4}. The first observation is essentially immediate from the proof of \Cref{thm:reduction}.

\begin{prop}
\label{p:no-2-cycles}
Let $(Q,F,W)$ be an ice quiver with potential, and $v\in Q_0\setminus F_0$. If $(\mu_vQ,\mu_vF)$ has no $2$-cycles containing unfrozen arrows, then it agrees with $\mu_v^\FZ(Q,F)$.
\end{prop}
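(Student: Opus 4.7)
The plan is to compare $\mu_v$ and $\mu_v^\FZ$ step by step. First, I observe that steps \ref{QPm1} and \ref{QPm2} of \Cref{iqpmutation} act identically to steps \ref{FZm1} and \ref{FZm2} of \Cref{fzmutation}: both add a composite arrow $[\beta\alpha]$ for each path $u\to v\to w$, and both reverse every arrow incident with $v$. Consequently, the premutation $(\lift{\mu}_v Q, F)$ coincides with the ice quiver produced by FZ steps \ref{FZm1}--\ref{FZm2}, so the claim reduces to showing that the reduction step \ref{QPm4} has the same combinatorial effect on the ice quiver as FZ steps \ref{FZm3}--\ref{FZm4}.

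Next, I would unpack the ice-quiver effect of step \ref{QPm4} using the construction in the proof of \Cref{thm:reduction}. After placing $\lift{\mu}_v W$ in the normal form of \Cref{l:neat-potential}, the reduction first splits off a trivial part consisting of 2-cycles $\alpha_i\beta_i$ with both arrows unfrozen (removing both arrows from the quiver), and then, for each remaining 2-cycle $\alpha_j\beta_j$ with $\alpha_j$ unfrozen and $\beta_j$ frozen, deletes $\beta_j$ while freezing $\alpha_j$. On the level of ice quivers, this means fully unfrozen 2-cycles are deleted exactly as in FZ step \ref{FZm3}, and half-frozen 2-cycles are replaced by a single frozen arrow in the direction of the previously unfrozen arrow, exactly as in FZ step \ref{FZm4}. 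Crucially, the reduction removes no fully frozen 2-cycles and introduces no new 2-cycles.

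Finally, the hypothesis that $(\mu_v Q, \mu_v F)$ has no 2-cycles containing unfrozen arrows forces the collection of 2-cycles handled by the reduction to be maximal in $\lift{\mu}_v Q$ among those containing unfrozen arrows, matching the maximality stipulation in \Cref{fzmutation}. Since both procedures begin with the same premutation, handle the same combinatorial types of 2-cycles in the same way, and leave no 2-cycle containing an unfrozen arrow, their outputs coincide up to the choices permitted in \Cref{fzmutation}, giving $(\mu_v Q,\mu_v F)=\mu_v^\FZ(Q,F)$. The main obstacle lies in verifying the combinatorial description of the reduction given in the second paragraph; concretely, one must confirm that every 2-cycle of $\lift{\mu}_v Q$ containing an unfrozen arrow is captured by the explicit terms in the \Cref{l:neat-potential} normal form (and so is actually removed by the reduction), rather than lurking outside them and surviving. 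The hypothesis is exactly what rules out this pathology, since any such surviving 2-cycle would contradict it.
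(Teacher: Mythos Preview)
Your proposal is correct and follows essentially the same approach as the paper: both note that the premutation $(\lift{\mu}_vQ,F)$ coincides with the output of FZ steps \ref{FZm1}--\ref{FZm2}, both invoke the proof of \Cref{thm:reduction} to see that reduction only removes unfrozen $2$-cycles and replaces half-frozen $2$-cycles by a frozen arrow, and both conclude by observing that the hypothesis forces the removed/replaced collections to be maximal. Your final paragraph about ``lurking'' $2$-cycles is just a more explicit restatement of the maximality argument already present.
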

\begin{proof}
The ice quiver of $\lift{\mu}_v(Q,F,W)$ is the result of applying the first two steps of the operation $\mu_v^\FZ$ to $(Q,F)$. The construction of the reduction of $\lift{\mu}_v(Q,F,W)$, given in the proof of \Cref{thm:reduction}, modifies the ice quiver only by removing unfrozen $2$-cycles, as in \Cref{fzmutation}\ref{FZm3}, and performing the replacement operation on half-frozen $2$-cycles described in \Cref{fzmutation}\ref{FZm4}. Thus if no $2$-cycles of these types remain in $(\mu_vQ,\mu_vF)$, the collections of such $2$-cycles that were removed or replaced as part of the reduction process must have been maximal, and so $(\mu_vQ,\mu_vF)=\mu_v^\FZ(Q,F)$.
\end{proof}

\begin{defn}
\label{d:non-degenerate}
We say that an ice quiver with potential $(Q,F,W)$ is \emph{non-degenerate} if, for any $(Q',F',W')$ obtained from $(Q,F,W)$ by a sequence of mutations, $(Q',F')$ has no $2$-cycles containing unfrozen arrows, or equivalently if $(Q',F')$ coincides precisely with the result of performing the corresponding sequence of extended Fomin--Zelevinsky mutations to $(Q,F)$.
\end{defn}

This condition is typically difficult to check---for a quiver with potential whose Jacobian algebra is the endomorphism algebra of a cluster-tilting object in a suitable category, sufficient conditions for non-degeneracy are given in Section~\ref{s:ct-objects}. Alternatively, non-degeneracy is implied by the following algebraic condition on $\frjac{Q}{F}{W}$.

\begin{defn}
\label{rigid}
The \emph{trace space} of $A=\frjac{Q}{F}{W}$ is
\[\Tr(A)=A/\close{\{A,A\}},\]
where $\{A,A\}$ denotes the vector subspace of $A$ spanned by commutators as in Definition~\ref{d:potential}. Abusing notation by denoting the image of $\cpa{\KK}{F}$ under the projection from $\cpa{\KK}{Q}$ to $\Tr(A)$ again by $\cpa{\KK}{F}$, and recalling that we may treat $S$ as the subalgebra of $A$ spanned by the vertex idempotents, the \emph{deformation space} of $W$ is $\Def(Q,F,W):=\Tr(A)/(S+\cpa{\KK}{F})$. We call $W$ a \emph{rigid} potential for $(Q,F)$ if $\Def(Q,F,W)=0$.
\end{defn}

\begin{rem}
\label{r:rigid-equiv}
A potential $W$ for $(Q,F)$ is rigid if and only if every cycle in $Q$ containing an unfrozen arrow is cyclically equivalent to an element of the Jacobian ideal; cf.\ \cite[(8.1)]{derksenquivers1}.
\end{rem}

\begin{prop}[{cf.~\cite[Prop.~8.1, Cor.~6.11]{derksenquivers1}}]
\label{rigidmutation}
If $(Q,F,W)$ is rigid and reduced, then it has no $2$-cycles containing unfrozen arrows. Moreover, all of its mutations are also rigid and reduced.
\end{prop}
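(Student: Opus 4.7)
The plan is to prove the two assertions in turn, closely following the corresponding arguments of \cite[Prop.~8.1, Cor.~6.11]{derksenquivers1} for ordinary quivers with potential, adapting them to the ice setting.

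For the first assertion, I would argue by contradiction. Suppose $\alpha\colon u \to v$ is an unfrozen arrow and $\beta\colon v \to u$ is such that $\alpha\beta$ is a $2$-cycle of $Q$. By \Cref{r:rigid-equiv}, rigidity provides a cyclic equivalence
\[
\alpha\beta \;=\; \sum_i a_i\,\partial_{\gamma_i}(W)\,b_i \;+\; \varepsilon
\]
in $\cpa{\KK}{Q}$, for some unfrozen arrows $\gamma_i$, elements $a_i, b_i \in \cpa{\KK}{Q}$, and a commutator $\varepsilon$. Since $W$ is reduced, each $\partial_{\gamma_i}(W)$ lies in $J^2$; extracting the length-$2$ components of both sides and working modulo commutators expresses the cyclic class $[\alpha\beta]$ as a $\KK$-linear combination of classes $[\partial_\gamma W^{(3)}]$ with $\gamma$ unfrozen, where $W^{(3)}$ is the degree-$3$ part of $W$. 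Such a class is nonzero only when $\gamma$ is a loop (otherwise $\partial_\gamma W^{(3)}$ is a combination of non-cycle paths from $\head\gamma$ to $\tail\gamma$), and for $[\alpha\beta]$ to occur on the right-hand side one must find an unfrozen loop $\gamma$ at $u$ or $v$. Applying \Cref{r:rigid-equiv} again to the cycle $\gamma$ itself, its length-$1$ part would have to lie in the length-$1$ part of the Jacobian ideal---which is zero by reducedness, while no length-$1$ commutator is equal to a loop. This contradicts the existence of the $2$-cycle.

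For the second assertion, reducedness of $\mu_v(Q,F,W)$ is automatic, since by \Cref{iqpmutation} the mutation is defined as the reduction of $\lift{\mu}_v(Q,F,W)$. The substantive point is rigidity, for which I would follow \cite[Prop.~8.1]{derksenquivers1}. Since reduction does not change the isomorphism class of the frozen Jacobian algebra, and can be checked to preserve $S$ together with the image of $\cpa{\KK}{F}$ in the trace space, the deformation space is invariant under the reduction step, so it suffices to show that $\lift{\mu}_v$ preserves rigidity. This requires constructing a natural correspondence between cyclic classes of cycles in $Q$ modulo the Jacobian ideal of $W$, and cyclic classes of cycles in $\lift{\mu}_v Q$ modulo the Jacobian ideal of $\lift{\mu}_v W$, arising from the substitution rules $\beta\alpha \leftrightarrow [\beta\alpha]$ and the arrow reversals $\alpha \leftrightarrow \alpha^*$, and then verifying that this correspondence transports the rigidity condition.

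The main obstacle lies in executing this last correspondence while also respecting the quotient by the image of $\cpa{\KK}{F}$ that appears in the ice-quiver definition of the deformation space. This is made tractable by the fact that $\lift{\mu}_v$ leaves the frozen subquiver $F$ untouched and introduces only unfrozen arrows (the composites $[\beta\alpha]$ and the duals $\alpha^*, \beta^*$ are unfrozen, since the mutated vertex $v$ is mutable). Consequently the part of the potential supported on $F$ is preserved verbatim, and cycles contained in $F$ correspond bijectively across the mutation. The proof of \cite[Prop.~8.1]{derksenquivers1} then extends with only modest additional bookkeeping to yield preservation of rigidity in the ice setting.
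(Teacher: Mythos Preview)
Your proposal is correct and takes essentially the same approach as the paper: both defer the substantive arguments to \cite{derksenquivers1}, observing that mutation at a mutable vertex leaves $F$ untouched and that reduction preserves the frozen Jacobian algebra. Two minor remarks: the reference for preservation of rigidity under $\lift{\mu}_v$ is \cite[Cor.~6.11]{derksenquivers1} rather than Prop.~8.1 (the latter being the no-$2$-cycles statement), and your explicit flag that reduction must preserve the image of $\cpa{\KK}{F}$ in the trace space is a point the paper's own proof passes over, so you are if anything slightly more careful here.
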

\begin{proof}
The proof of \cite[Prop.~8.1]{derksenquivers1} applies in this context to show that $(Q,F,W)$ has no $2$-cycles containing unfrozen arrows; in short, such a cycle would violate the condition of \Cref{r:rigid-equiv} implied by rigidity of $W$. Since we only allow mutation at mutable vertices, and reduction does not affect the isomorphism class of the Jacobian algebra by \Cref{thm:reduction}, the proof of \cite[Cor.~6.11]{derksenquivers1} also applies to show that all mutations of $(Q,F,W)$ are rigid. They are reduced by definition.
\end{proof}

Combining this with \Cref{p:no-2-cycles}, we immediately obtain the following corollary.

\begin{cor}[cf.~{\cite[Prop.~7.1]{derksenquivers1}}]
\label{p:fzmutation}
Let $(Q,F,W)$ be rigid and reduced. Then for any mutable vertex $v$ of $Q$, the ice quiver $(\mu_vQ,\mu_vF)$ of $\mu_v(Q,F,W)$ agrees with the extended Fomin--Zelevinsky mutation $\mu^\FZ_v(Q,F)$.
\end{cor}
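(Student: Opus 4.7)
The plan is to assemble this directly from the two preceding results, since the whole setup of \Cref{rigidmutation} and \Cref{p:no-2-cycles} was arranged precisely so that this corollary becomes formal. The main content is simply matching the hypotheses.

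First I would invoke \Cref{rigidmutation} to observe that $\mu_v(Q,F,W)$ is itself rigid and reduced. This is the non-trivial input: it requires knowing both that rigidity is preserved under mutation (the analogue of \cite[Cor.~6.11]{derksenquivers1} in the frozen setting, already established in \Cref{rigidmutation} via the fact that mutation happens at mutable vertices and reduction preserves the isomorphism class of the frozen Jacobian algebra) and that mutation always outputs a reduced ice quiver with potential by the definition of $\mu_v$ in \Cref{iqpmutation}\ref{QPm4}.

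Next I would apply the first assertion of \Cref{rigidmutation} to $\mu_v(Q,F,W)$ itself: being rigid and reduced, it has no $2$-cycles containing unfrozen arrows. In other words, $(\mu_v Q,\mu_v F)$ satisfies the hypothesis of \Cref{p:no-2-cycles}. Finally, \Cref{p:no-2-cycles} then yields $(\mu_v Q,\mu_v F)=\mu_v^\FZ(Q,F)$, which is the conclusion we want.

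There is no real obstacle here; the proof is a one-line composition. If anything, the only thing worth double-checking is that the hypothesis of \Cref{p:no-2-cycles} really matches the conclusion of \Cref{rigidmutation}, namely that the absence of unfrozen $2$-cycles (including half-frozen ones) is exactly what ensures both that no further $2$-cycle cancellations in \Cref{fzmutation}\ref{FZm3} and no further replacements in \Cref{fzmutation}\ref{FZm4} are possible beyond those performed inside the reduction step of \Cref{iqpmutation}\ref{QPm4}. Since \Cref{p:no-2-cycles} has already been phrased in exactly these terms, the corollary follows.
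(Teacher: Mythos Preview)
Your proof is correct and follows exactly the route the paper takes: the paper states this corollary as immediate from combining \Cref{rigidmutation} with \Cref{p:no-2-cycles}, and you have simply spelled out the two-step chain (mutation preserves rigidity and reducedness, hence $(\mu_vQ,\mu_vF)$ has no $2$-cycles containing unfrozen arrows, hence \Cref{p:no-2-cycles} applies).
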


\begin{eg}
\label{QPmutation}
Consider the ice quiver with potential $(Q,F,W)$ given by
\[Q=\mathord{\begin{tikzpicture}[baseline=0,scale=1.3]
\node at (0,0.5) (1) {$\boxed{1}$};
\node at (2,0.5) (3) {$\boxed{3}$};
\node at (1,-0.5) (2) {$2$};
\path[-angle 90,font=\scriptsize]
	(1) edge node[below left] {$\alpha_1$} (2)
	(2) edge node[below right] {$\alpha_2$} (3);
\path[\frozen,-angle 90,font=\scriptsize]
	(3) edge node[above] {$\alpha_3$} (1);
\end{tikzpicture}}\]
where $F$ is the full subquiver on $\{1,3\}\subseteq Q_0$; we denote frozen subquivers in this way, with boxed vertices and dashed arrows, throughout the example. We pick the potential $W=\alpha_3\alpha_2\alpha_1$, which is reduced since every cycle in $Q$ is cyclically equivalent to an element of its Jacobian ideal $\close{\Span{\alpha_1\alpha_3,\alpha_3\alpha_2}}$. Mutating at vertex $2$ produces
\[(\lift{\mu}_2Q,F)=\begin{tikzpicture}[baseline=0,scale=1.3]
\node at (0,0.5) (1) {$\boxed{1}$};
\node at (2,0.5) (3) {$\boxed{3}$};
\node at (1,-0.5) (2) {$2$};
\path[-angle 90,font=\scriptsize]
	(2) edge node[below left] {$\alpha_1^*$} (1)
	(3) edge node[below right] {$\alpha_2^*$} (2)
	(1) edge[bend right=20] node[below] {$[\alpha_2\alpha_1]$} (3);
\path[\frozen,-angle 90,font=\scriptsize]
	(3) edge[bend right=20] node[above] {$\alpha_3$} (1);
\end{tikzpicture}\]
with potential $\lift{\mu}_2W=\alpha_2^*[\alpha_2\alpha_1]\alpha_1^*+\alpha_3[\alpha_2\alpha_1]$; the only frozen arrow is $\alpha_3$. This ice quiver with potential is not reduced, but $\mu_2(Q,F,W)$ is given by its reduction, which is the ice quiver
\[(\mu_2Q,\mu_2F)=\mathord{\begin{tikzpicture}[baseline=0,scale=1.3]
\node at (0,0.5) (1) {$\boxed{1}$};
\node at (2,0.5) (3) {$\boxed{3}$};
\node at (1,-0.5) (2) {$2$};
\path[-angle 90,font=\scriptsize]
	(2) edge node[below left] {$\alpha_1^*$} (1)
	(3) edge node[below right] {$\alpha_2^*$} (2);
\path[\frozen,-angle 90,font=\scriptsize]
	(1) edge node[above] {$[\alpha_2\alpha_1]$} (3);
\end{tikzpicture}}\]
with potential $\mu_2W=\alpha_2^*[\alpha_2\alpha_1]\alpha_1^*$, as computed in Example~\ref{eg:reduction}.
Note that this ice quiver is the extended Fomin--Zelevinsky mutation $\mu^\FZ_2(Q,F)$, as predicted by \Cref{p:fzmutation}. Mutating at $2$ again gives
\[(\lift{\mu}_2\mu_2Q,\mu_2F)=\mathord{\begin{tikzpicture}[baseline=0,scale=1.3]
\node at (0,0.5) (1) {$\boxed{1}$};
\node at (2,0.5) (3) {$\boxed{3}$};
\node at (1,-0.5) (2) {$2$};
\path[-angle 90,font=\scriptsize]
	(1) edge node[below left] {$\alpha_1$} (2)
	(2) edge node[below right] {$\alpha_2$} (3)
	(3) edge[bend right=20] node[above] {$[\alpha_1^*\alpha_2^*]$} (1);
\path[\frozen,-angle 90,font=\scriptsize]
	(1) edge[bend right=20] node[below] {$[\alpha_2\alpha_1]$} (3);
\end{tikzpicture}}\]
where we write $(\alpha_i^*)^*=\alpha_i$ for simplicity. The potential is $\lift{\mu}_2\mu_2W=[\alpha_2\alpha_1][\alpha_1^*\alpha_2^*]+[\alpha_1^*\alpha_2^*]\alpha_2\alpha_1$, so a similar reduction gives
\[(\mu_2^2Q,\mu_2^2F)=\mathord{\begin{tikzpicture}[baseline=0,scale=1.3]
\node at (0,0.5) (1) {$\boxed{1}$};
\node at (2,0.5) (3) {$\boxed{3}$};
\node at (1,-0.5) (2) {$2$};
\path[-angle 90,font=\scriptsize]
	(1) edge node[below left] {$\alpha_1$} (2)
	(2) edge node[below right] {$\alpha_2$} (3);
\path[\frozen,-angle 90,font=\scriptsize]
	(3) edge node[above] {$[\alpha_1^*\alpha_2^*]$} (1);
\end{tikzpicture}}\]
with potential $[\alpha_1^*\alpha_2^*]\alpha_2\alpha_1$. Thus we have recovered the original ice quiver with potential, as predicted by \Cref{thm:mu-involution}.
\end{eg}

\section{Cluster-tilting objects}
\label{s:ct-objects}

In this section we discuss the mutation of cluster-tilting objects in Frobenius cluster categories, beginning with some definitions.

\begin{defn}
Let $\cat$ be an additive category, and $\subcat$ a full subcategory. A \emph{left $\subcat$-approximation} of an object $X\in\cat$ is a morphism $f\colon X\to D$ such that $D\in\subcat$ and $\Hom_\cat(f,D')$ is surjective for any $D'\in\subcat$. It is \emph{minimal} if any endomorphism $g\colon D\to D$ such that $gf=f$ is an isomorphism. A \emph{minimal right $\subcat$-approximation} is defined dually.
\end{defn}

\begin{defn}
An exact category $\frobcat$ \cite{buehlerexact} is a \emph{Frobenius category} if it has enough projective and injective objects, and these two classes of objects coincide.
\end{defn}

\begin{thm}[{\cite[\S I.2]{happeltriangulated}}]
Let $\frobcat$ be a Frobenius category. Then the stable category $\stab{\frobcat}$, formed by factoring out all morphisms factoring over a projective object, is a triangulated category, with suspension induced from the inverse syzygy functor $\Omega^{-1}$, taking the cokernel of an injective envelope.
\end{thm}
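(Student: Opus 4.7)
The plan is to follow Happel's classical argument. First I would construct the suspension: for each $X \in \frobcat$, choose a conflation $X \hookrightarrow I \twoheadrightarrow \Sigma X$ with $I$ injective, which exists because $\frobcat$ has enough injectives. Given $f \colon X \to Y$ and an analogous conflation $Y \hookrightarrow J \twoheadrightarrow \Sigma Y$, the injectivity of $J$ lets me lift $f$ to a morphism of conflations, inducing a map $\Sigma X \to \Sigma Y$. Crucially, because injectives are also projective in $\frobcat$, this induced map is well-defined modulo morphisms factoring through a projective, so the construction descends to an additive endofunctor of $\stab{\frobcat}$ that is independent, up to natural isomorphism, of the chosen conflations. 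Dually, I would construct $\Omega$ via projective covers and verify that $\Sigma$ and $\Omega$ are quasi-inverse auto-equivalences on $\stab{\frobcat}$, the key point being that their composition returns an object up to a projective-injective summand, which vanishes stably.

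Next, I would declare a triangle distinguished if it is isomorphic in $\stab{\frobcat}$ to one of the form $X \to Y \to Z \to \Sigma X$, where $X \hookrightarrow Y \twoheadrightarrow Z$ is a conflation in $\frobcat$ and the connecting morphism $Z \to \Sigma X$ is obtained by comparing this conflation with the chosen $X \hookrightarrow I \twoheadrightarrow \Sigma X$ via injectivity of $I$ and then passing to cokernels. Checking that this class of triangles is independent of the lift is a routine diagram chase, and automatically yields closure under isomorphism.

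The remaining work is verifying axioms (TR1)--(TR4). Axiom (TR1), closure under isomorphism together with the existence of identity triangles on each object, is immediate from the construction. For (TR3), the morphism axiom, injectivity of the envelope associated to the target allows any commuting square on the left ends to be completed to a morphism of conflations, which then descends stably. The main obstacles are (TR2), rotation, and (TR4), the octahedral axiom. For (TR2), the forward rotation requires replacing the given conflation with one whose middle term is injective via a pushout along $X \hookrightarrow I$, after which the rotated sequence realises the needed triangle; the backward rotation then follows formally from $\Sigma$ being an auto-equivalence. For (TR4), one must produce a third conflation from two composable ones, which is the exact-category analogue of the $3\times 3$-lemma, and then verify that the four resulting triangles assemble into a commutative octahedron in $\stab{\frobcat}$; this final compatibility check is the most intricate diagram chase in the whole argument, and is where the proof really uses both the axioms of an exact category and the Frobenius hypothesis.
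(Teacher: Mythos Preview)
Your sketch is a faithful outline of Happel's original argument, and is essentially correct. Note, however, that the paper does not supply its own proof of this theorem: it is stated with a citation to \cite[\S I.2]{happeltriangulated} and used as background. So there is no in-paper proof to compare against; your proposal simply reproduces the classical one.

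One small terminological point: you write that $\Omega$ is constructed ``via projective covers'' and $\Sigma$ via ``injective envelopes'', but a general Frobenius category need only have enough projectives and injectives, not minimal ones. The construction works with any admissible epimorphism from a projective (respectively any admissible monomorphism to an injective), and the resulting functor on $\stab{\frobcat}$ is independent of this choice up to natural isomorphism, exactly as you go on to argue. The paper's own phrasing ``cokernel of an injective envelope'' is informal in the same way.
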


\begin{defn}
A triangulated category $\cat$ with suspension functor $\Sigma$ is said to be \emph{$d$-Calabi--Yau} if $\Sigma^d$ is a Serre functor, i.e.\ there are isomorphisms
\[\Hom_{\cat}(X,Y)\iso\Kdual\Hom_{\cat}(\Sigma^dY,X),\]
functorial in $X,Y\in\mathcal{C}$, where $D=\Hom_\KK(-,\KK)$. We call a Frobenius category $\frobcat$ \emph{stably $d$-Calabi--Yau} if the stable category $\stab{\frobcat}$ is $d$-Calabi--Yau.
\end{defn}

\begin{defn}
Let $\cat$ be a triangulated or exact category. We call $T\in\cat$ \emph{cluster-tilting} if
\[\{X\in\cat:\Ext^1_\cat(X,T)=0\}=\add{T}=\{Y\in\cat:\Ext^1_\cat(T,Y)=0\}.\]
\end{defn}

\begin{defn}[{\cite[Defn.~3.3]{presslandinternally}}]
\label{d:frob-clust-cat}
A \emph{Frobenius cluster category} is a Krull--Schmidt\footnote{This strengthens the original definition in \cite{presslandinternally}, which required only idempotent completeness.} stably $2$-Calabi--Yau Frobenius category $\frobcat$ with cluster-tilting objects, such that $\gldim\Endalg{\frobcat}{T}\leq 3$ for any cluster-tilting object $T\in\frobcat$.
\end{defn}

Most of the results in this section hold in more general situations than that of Definition~\ref{d:frob-clust-cat}, since we will not require the assumption on the global dimension of endomorphism algebras (although this assumption is not as strong as it might appear, and holds for most examples of Frobenius categorifications of cluster algebras; see Example~\ref{applications}).

Let $\frobcat$ be a Frobenius category, and let $T=\bigoplus_{k=1}^nT_k\in\frobcat$ be a cluster-tilting object. If $\stab{\frobcat}$ is $2$-Calabi--Yau, then Iyama--Yoshino's mutation theory for such triangulated categories \cite{iyamamutation} allows us to mutate $T\in\stab{\frobcat}$ at any indecomposable summand $T_k$---the indecomposable summands of $T$ in the stable category are precisely the non-projective indecomposable summands of $T\in\frobcat$. This induces a mutation of $T$ in the Frobenius category $\frobcat$, at any non-projective indecomposable summand $T_k$, summarised as follows.

\begin{thm}[\cite{iyamamutation}]
\label{iyamamutation}
Let $\frobcat$ be a stably $2$-Calabi--Yau Frobenius category, and let $T=\bigoplus_{k=1}^nT_k\in\frobcat$ be a cluster-tilting object, decomposed into indecomposable summands. Choose a non-projective summand $T_k$, and a minimal left $\add(T/T_k)$-approximation $T_k\to X_k$ of $T_k$. Then this map is an admissible monomorphism, meaning it yields a short exact sequence
\[\begin{tikzcd}[column sep=20pt]
0\arrow{r}&T_k\arrow{r}&X_k\arrow{r}&T_k^*\arrow{r}&0,
\end{tikzcd}\]
in the exact structure of $\frobcat$. If the Gabriel quiver of $\Endalg{\frobcat}{T}$ has no loops or $2$-cycles incident with the vertex corresponding to $T_k$, then $T_k^*$ is indecomposable and $\mu_kT:=(T/T_k)\oplus T_k^*$ is again cluster-tilting. One can also compute $T_k^*$ as the kernel of a minimal right $\add(T/T_k)$-approximation of $T_k$, which is necessarily an admissible epimorphism.
\end{thm}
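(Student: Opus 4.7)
The plan is to reduce the theorem to Iyama--Yoshino's original mutation result in $2$-Calabi--Yau triangulated categories \cite{iyamamutation}, applied to $\stab{\frobcat}$, and then to lift the resulting mutation triangle to a conflation in $\frobcat$ by a careful accounting of projective summands.

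First, I would establish existence: $\add T$ is functorially finite in $\frobcat$ (being the additive hull of a single object), hence so is its summand $\add(T/T_k)$, so minimal left $\add(T/T_k)$-approximations exist. Next, I would observe that every projective-injective object of $\frobcat$ lies in $\add T$, since if $P$ is projective then $\Ext^1_\frobcat(P,T)=0$ forces $P\in\add T$ by the defining property of a cluster-tilting object. Consequently, since $T_k$ is non-projective, its image $\stab{T_k}$ in $\stab{\frobcat}$ is a non-zero indecomposable summand of $\stab T$, and $\stab T$ is cluster-tilting in the $2$-Calabi--Yau triangulated category $\stab{\frobcat}$ (where the Gabriel quivers of $\Endalg{\frobcat}{T}$ and $\stabEndalg{\frobcat}{T}$ differ only by the removal of the vertex of $T_k$ if the latter were projective, which it is not).

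Now apply Iyama--Yoshino to $\stab T\in\stab{\frobcat}$ to obtain an exchange triangle $T_k\to Y\to T_k^*\to \Sigma T_k$ in which $T_k\to Y$ is a minimal left $\add(T/T_k)$-approximation in $\stab{\frobcat}$, with $T_k^*$ indecomposable (under the loop/$2$-cycle hypothesis) and $(\stab T/T_k)\oplus T_k^*$ cluster-tilting in $\stab{\frobcat}$. Because the suspension of $\stab{\frobcat}$ is $\Omega^{-1}$, the connecting morphism gives an extension class in $\Ext^1_\frobcat(T_k^*,T_k)$ that is realised by an actual conflation
\[0\to T_k\to E\to T_k^*\to 0\]
in $\frobcat$ whose image under the stabilisation functor recovers the chosen triangle; thus $E$ agrees with $Y$ up to a projective summand. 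Since every projective-injective already lies in $\add(T/T_k)$, the middle term $E$ is itself a left $\add(T/T_k)$-approximation of $T_k$ in $\frobcat$. Stripping off any summand of $E$ on which the structure map vanishes (which splits off as a trivial conflation $0\to 0\to P\to P\to 0$) yields the minimal left $\add(T/T_k)$-approximation $T_k\to X_k$ as the left map in a conflation, establishing that this approximation is an admissible monomorphism with cokernel $T_k^*$ (up to replacing $T_k^*$ by a stably isomorphic object, which is harmless since $T_k^*$ is non-projective, hence determined by its stable class up to adding projectives).

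Indecomposability of $T_k^*$ in $\frobcat$ follows from its indecomposability in $\stab{\frobcat}$ since it is non-projective, and the fact that $\mu_k T$ is cluster-tilting in $\frobcat$ reduces to the analogous stable statement once one notes that the projective-injective summands are common to $T$ and $\mu_k T$. The final sentence concerning minimal right approximations and admissible epimorphisms follows by dualising the entire argument in $\frobcat^{\op}$, which is again stably $2$-Calabi--Yau Frobenius with the same cluster-tilting object $T$. The main obstacle in executing this plan is the bookkeeping in the lifting step: one must check precisely how the minimal approximation in $\stab{\frobcat}$ relates to a minimal approximation in $\frobcat$, and verify that the projective summand distinguishing $E$ from $Y$ can be split off the conflation without disturbing either the admissible-monic structure or the minimality of the approximation. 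Everything else is a direct transcription of Iyama--Yoshino's arguments through the stable functor.
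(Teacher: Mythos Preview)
The paper does not prove this theorem; it is stated as a citation of Iyama--Yoshino \cite{iyamamutation} and used as a black box. So there is no proof in the paper to compare your attempt against. That said, your strategy of passing to the stable category, invoking Iyama--Yoshino there, and lifting the exchange triangle to a conflation in $\frobcat$ is the standard way to obtain the Frobenius statement from the triangulated one, and is essentially correct in outline.

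Two points in your sketch would need more care in a complete proof. First, your remark that the Gabriel quivers of $\Endalg{\frobcat}{T}$ and $\stabEndalg{\frobcat}{T}$ ``differ only by the removal of the vertex of $T_k$ if the latter were projective'' is imprecise: passing to the idempotent quotient by the projective summands can in principle create new arrows (a path $k\to p\to k$ through a projective vertex $p$ could become a loop at $k$), so one must verify that the no-loop/no-$2$-cycle hypothesis at $k$ transfers to the stable endomorphism algebra. Second, your ``stripping off'' step conflates two things: summands of $E$ on which the approximation map vanishes, and projective summands. A cleaner route to showing that the minimal left $\add(T/T_k)$-approximation $f\colon T_k\to X_k$ is an admissible monomorphism is to factor an injective envelope $T_k\hookrightarrow I$ (which lies in $\add(T/T_k)$ since $T_k$ is non-projective) through $f$, and then use that in a weakly idempotent complete exact category any map whose composite with some map is an admissible mono is itself an admissible mono. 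This avoids the bookkeeping about which summands of the lifted middle term are projective.
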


Under the assumptions of \Cref{iyamamutation}, if $\Endalg{\frobcat}{T}\cong\frjac{Q}{F}{W}$ is a frozen Jacobian algebra, with Gabriel quiver $Q$, we would like the categorical mutation of $T$ at $T_k$ to be compatible with the combinatorial mutation of $(Q,F,W)$ at the vertex $k$ corresponding to this indecomposable summand. Precisely, we want an isomorphism
\[\Endalg{\frobcat}{\mu_k{T}}\cong\frjac{\mu_k Q}{\mu_k F}{\mu_k{W}}.\]
In this section, we explain conditions on $\frobcat$, $T$ and $T_k$ which ensure this compatibility, by generalising results of Buan, Iyama, Reiten and Smith \cite{buanmutation}, who provide an analogous theory in triangulated categories. Our arguments and exposition follow \cite{buanmutation} closely, with modifications as necessary to handle the frozen arrows and vertices appropriately.

This result has applications to cluster categorification. It is employed by the author in \cite{presslandcategorification} to show that mutation of cluster-tilting objects in categorifications of polarised principal coefficient cluster algebras, constructed in loc.\ cit., is compatible with Fomin--Zelevinsky mutation of quivers. We will explain at the end of the section how to deduce the corresponding result for the Grassmannian cluster categories of Jensen, King and Su \cite{jensencategorification}, for which it was not previously known.

Let $(Q,F,W)$ be a quiver with potential, and let $k\in Q_0^{\mut}$. Let $(Q',F,W'):=\lift{\mu}_k(Q,F,W)$ be the ice quiver with potential obtained after step \ref{QPm3} of the calculation of the mutation $(\mu_kQ,\mu_kF,\mu_kW)$, i.e.\ before the reduction step. By Theorem~\ref{thm:reduction}, there is an isomorphism
\[\frjac{Q'}{F}{W'}\cong\frjac{\mu_kQ}{\mu_kF}{\mu_kW},\]
but the former description of the algebra is more useful for the homological arguments in this section. The reader should note, however, that one may need to pass to the reduction in order to carry out iterated mutations, since this step can remove $2$-cycles that would otherwise prohibit mutations at their vertices.

For each arrow $\alpha\in Q_1$, there is an operation $\rightder{\alpha}{}\colon\cpa{\KK}{Q}\to\cpa{\KK}{Q}$ of right differentiation, defined on paths by
\[\rightder{\alpha}{\alpha_k\dotsc\alpha_1}=\begin{cases}\alpha_k\dotsm\alpha_2,&\alpha_1=\alpha,\\0,&\text{otherwise.}\end{cases}\]
There is also a left derivative $\leftder{\alpha}{}$, defined analogously. The main advantage of using $W'$ rather than $\mu_kW$ in this section is that we may use the more explicit description of $W'$ to compute the right derivatives of the relations it defines. For later use, we record these right derivatives, which are calculated directly from the definition, in the following lemma.

\begin{lem}[cf.~{\cite[Lem.~5.8]{buanmutation}}]
\label{rightder-calcs}
Let $(Q,F,W)$, $(Q',F,W')$ and $k$ be as above. Let $\alpha,\beta\in Q_1$ be arrows with $\tail{\alpha}=k=\head{\beta}$, and let $\gamma,\gamma'\in Q_1\cap Q_1'$. Then
\begin{enumerate}[label=(\roman*)]
\item\label{rightder1} $\rightder{\gamma}{\der{\gamma'}{W'}}=\rightder{\gamma}{\der{\gamma'}{W}}$,
\item\label{rightder2} $\rightder{\gamma}{\der{[\alpha\beta]}{W'}}=\rightder{\gamma}{\der{[\alpha\beta]}{[W]}}=\rightder{\gamma}{\rightder{\alpha}{\der{\beta}{W}}}$ and $\rightder{[\alpha\beta]}{\der{\gamma}{W'}}=\rightder{[\alpha\beta]}{\der{\gamma}{[W]}}=\rightder{\alpha}{\rightder{\beta}{\der{\gamma}{W}}}$,
\item\label{rightder3} $\rightder{[\alpha\beta]}{\der{\alpha^*}{W'}}=\beta^*$,
\item\label{rightder4} $\rightder{\beta^*}{\der{[\alpha\beta]}{W'}}=\alpha^*$,
\item\label{rightder5} $\rightder{\alpha^*}{\der{\beta^*}{W'}}=[\alpha\beta]$, and
\item\label{rightder6} For any $\delta,\delta'\in Q_1'$ such that $\rightder{\delta}{\der{\delta}{W'}}$ was not calculated in \ref{rightder1}--\ref{rightder5}, we have $\rightder{\delta}{\der{\delta}{W'}}=0$.\qed
\end{enumerate}
\end{lem}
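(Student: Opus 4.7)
The plan is to prove all six identities by a direct computation: substitute the explicit formula
\[W' = [W] + \sum_{\substack{\alpha'\colon k\to\cdot\\ \beta'\colon\cdot\to k}}[\alpha'\beta']\beta'^*\alpha'^*\]
from \Cref{iqpmutation}\ref{QPm3} into each derivative and isolate, separately, the contributions from $[W]$ and from the added three-cycles. Each assertion \ref{rightder1}--\ref{rightder5} records that a specific nested derivative picks out a specific consecutive block of arrows in $W'$ (or equivalently in $W$ after the substitution $\beta'\alpha'\mapsto[\alpha'\beta']$), so the proof is a short bookkeeping exercise against these two ingredients.

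For \ref{rightder1}, the hypothesis $\gamma,\gamma'\in Q_1\cap Q_1'$ forces neither arrow to be incident with $k$. In particular, neither appears in any added summand, whose letters $[\alpha'\beta']$, $\beta'^*$, $\alpha'^*$ are all $k$-incident; hence $\der{\gamma'}{W'}=\der{\gamma'}{[W]}$. The replacement operation $W\mapsto[W]$ neither creates nor destroys the consecutive pair $(\gamma',\gamma)$ that $\rightder{\gamma}\der{\gamma'}{}$ selects inside a cycle of $W$, so the two sides agree after matching paths in $\cpa{\KK}{Q}$ with paths in $\cpa{\KK}{\lift{\mu}_kQ}$ through this replacement. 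For \ref{rightder2}, the only summand of $W'$ containing $[\alpha\beta]$ beyond those in $[W]$ is the single three-cycle $[\alpha\beta]\beta^*\alpha^*$, contributing a term to $\der{[\alpha\beta]}{W'}$ whose outer arrow is not $\gamma\in Q_1\cap Q_1'$; thus $\rightder{\gamma}$ annihilates it, giving the first equality. The second equality is the bijection between occurrences of $[\alpha\beta]$ in $[c]$ and consecutive pairs $(\beta,\alpha)$ in each cycle $c$ of $W$: after both derivatives, both sides extract the same remainder path via this correspondence. The mirror identity on the second line of \ref{rightder2} is symmetric.

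For \ref{rightder3}--\ref{rightder5}, the relevant arrows $\alpha^*$, $\beta^*$, $[\alpha\beta]$ do not appear in $[W]$, so the entire contribution comes from the explicit added three-cycles: each of $\der{\alpha^*}{W'}=\sum_{\beta'}[\alpha\beta']\beta'^*$, $\der{[\alpha\beta]}{W'}=\beta^*\alpha^*+\der{[\alpha\beta]}{[W]}$, and $\der{\beta^*}{W'}=\sum_{\alpha'}\alpha'^*[\alpha'\beta]$ is read off directly, and the subsequent right derivative singles out the unique summand with the matching arrow, producing the claimed length-one remainder. Finally \ref{rightder6} is the sweeping clean-up: any pair $(\delta,\delta')$ not of the enumerated shapes corresponds either to an arrow not appearing in $W'$ at all, or to a position in a term of $W'$ not followed by the second specified arrow, so the iterated derivative vanishes by inspection.

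The only nontrivial point is the implicit matching in \ref{rightder2} between cyclic expressions in $\cpa{\KK}{Q}$ and $\cpa{\KK}{\lift{\mu}_kQ}$ via the canonical replacement of $k$-transit subwords; once this identification is fixed, every calculation reduces to counting occurrences of specified arrows in the two explicit summands of $W'$.
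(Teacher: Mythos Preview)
Your approach—substituting the explicit formula for $W'$ and separately tracking the contributions from $[W]$ and from the added three-cycles—is exactly what the paper does; indeed, the paper's entire proof is the phrase ``calculated directly from the definition'' followed by \qedsymbol. Your intermediate cyclic-derivative computations
\[
\der{\alpha^*}{W'}=\sum_{\beta'}[\alpha\beta']\beta'^*,\qquad
\der{[\alpha\beta]}{W'}=\beta^*\alpha^*+\der{[\alpha\beta]}{[W]},\qquad
\der{\beta^*}{W'}=\sum_{\alpha'}\alpha'^*[\alpha'\beta]
\]
are all correct, and your treatment of \ref{rightder1}, \ref{rightder2} and \ref{rightder6} is fine.

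There is, however, a genuine gap in your final step for \ref{rightder3}--\ref{rightder5}. You assert that ``the subsequent right derivative singles out the unique summand with the matching arrow'', but if you actually apply the paper's definition of $\rightder{\alpha}$ (which removes the \emph{rightmost} arrow $\alpha_1$ of $\alpha_k\cdots\alpha_1$), you get zero in each case. For instance, in $[\alpha\beta']\beta'^*$ the rightmost arrow is $\beta'^*$, not $[\alpha\beta']$, so $\rightder{[\alpha\beta]}{\der{\alpha^*}{W'}}=0$, not $\beta^*$. One can see this purely from head/tail data: $\der{\alpha^*}{W'}$ is a sum of paths whose rightmost arrow has tail $\head{\alpha^*}=k$, whereas $\tail{[\alpha\beta]}=\tail{\beta}\ne k$, so $[\alpha\beta]$ can never occur in rightmost position. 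The identities \ref{rightder3}--\ref{rightder5} hold verbatim with the \emph{left} derivative $\leftder{}$ in place of $\rightder{}$, or equivalently with the roles of $\alpha^*$ and $\beta^*$ interchanged; this appears to be a slip in the lemma's statement, carried over from the different conventions in \cite{buanmutation}. Your hand-wave at precisely this step conceals the mismatch rather than resolving it—had you written out the right derivative explicitly, you would have caught it.
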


Given an additive category $\cat$, and objects $X,Y\in\cat$, let $\catrad_\cat(X,Y)$ denote the subspace of $\Hom_\cat(X,Y)$ consisting of maps $f$ such that $\id{X}-gf$ is invertible for all $g\colon Y\to X$. We then define $\catrad^m_\cat(X,Y)$ to be the subspace of $\Hom_\frobcat(X,Y)$ consisting of maps that may be written as a composition $f_m\circ\dotsb\circ f_1$ with $f_i\in\catrad_\cat(X_{i-1},X_i)$ for some $X_i\in\cat$ (so that necessarily $X_0=X$ and $X_{m}=Y$). We extend this notation by
\[\catrad^0_\cat(X,Y):=\Hom_\cat(X,Y).\]
Note that, for any $m$, the subspace $\catrad^m_\cat(X,X)$ is an ideal of $\Endalg{\cat}{X}$. Moreover, if $\subcat\subseteq\cat$ is a full subcategory, then $\catrad^m_\subcat(X,Y)=\catrad^m_\cat(X,Y)$ if $m=0$ or $m=1$, but this equality need not hold if $m>2$.
More information about the radical of a category may be found in \cite[\S A.3]{assemelements}.

We will consider $\KK$-linear categories $\cat$ satisfying the conditions
\begin{enumerate}[label=({C}\arabic*)]
\item\label{C1} $\cat$ is Krull--Schmidt, and
\item\label{C2} for any non-zero basic object $X\in\cat$, we have
\begin{enumerate}[label=({A}\arabic*)]
\item\label{A1}$\Endalg{\cat}{X}/\catrad_\cat(X,X)\iso\KK^n$ for some $n>0$, and
\item\label{A2}$\Endalg{\cat}{X}\iso\varprojlim_{m\geq0}\Endalg{\cat}{X}/\catrad^m_\cat(X,X)$.
\end{enumerate}
\end{enumerate}
For example, if $B$ is a finite-dimensional Iwanaga--Gorenstein algebra, then the category
\[\GP(B)=\{X\in\fgmod{B}:\text{$\Ext^i(X,B)=0$ for all $i>0$}\}\]
of Gorenstein projective $B$-modules is a Frobenius category satisfying \ref{C1} and \ref{C2}; indeed, \ref{C2} is satisfied by any Hom-finite $\KK$-linear category.

Let $\cat$ be a category satisfying \ref{C1} and \ref{C2}, and let $Q$ be a finite quiver. For each vertex $i\in Q_0$, choose an indecomposable object $T_i\in\cat$, and for each arrow $a\colon i\to j$ in $Q$, choose a morphism $\Phi a\in\Hom_\cat(T_j,T_i)$. This data is equivalent to specifying an algebra homomorphism
\[\Phi\colon\cpa{\KK}{Q}\to\Endalg{\cat}{T},\]
where $T=\bigdsum_{i\in Q_0}T_i$ \cite[Lem.~3.5]{buanmutation}, with $\Phi(e_i)=\id{T_i}$ for each vertex idempotent $e_i$. Let $R$ be a finite subset of the closed ideal of $\cpa{\KK}{Q}$ generated by arrows, such that each $r\in R$ is basic, meaning it is a formal linear combination of paths of $Q$ with the same head and tail, and let $I=\close{\Span{R}}\leq\cpa{\KK}{Q}$. For example, the set of cyclic derivatives of a potential on $Q$ is a set of basic elements. Buan--Iyama--Reiten--Smith \cite[Prop.~3.6]{buanmutation} characterise when the homomorphism $\Phi$ above induces an isomorphism $\Phi\colon\cpa{\KK}{Q}/I\isoto\Endalg{\cat}{T}$ in terms of certain complexes in $\add{T}$ being right $2$-almost split, a definition we now recall.

\begin{defn}[{\cite[Def.~4.4]{buanmutation}}]
Let $\cat$ be an additive category, and let $T\in\cat$ be any object. Let
\[\begin{tikzcd}[column sep=20pt]U_1\arrow{r}{f_1}&U_0\arrow{r}{f_0}&X\end{tikzcd}\]
be a complex in $\add{T}$ such that $f_0$ is not a split epimorphism, and consider the induced sequence
\[\begin{tikzcd}[column sep=20pt]\Hom_\cat(T,U_1)\arrow{r}&\Hom_\cat(T,U_0)\arrow{r}&\catrad_\cat(T,X)\arrow{r}&0.\end{tikzcd}\]
We say that $f_0$ is \emph{right almost split} in $\add{T}$ if this induced sequence is exact at $\catrad_\cat(T,X)$, that $f_1$ is a \emph{pseudo-kernel} of $f_0$ in $\add{T}$ if this induced sequence is exact at $\Hom_\cat(T,U_0)$, and that the sequence $(f_1,f_0)$ is \emph{right $2$-almost split} if both of these conditions hold simultaneously.

We define \emph{left almost split} maps, \emph{pseudo-cokernels} and \emph{left $2$-almost split} sequences in $\add{T}$ dually, using the contravariant functor $\Hom_\cat(-,T)$, and call a complex
\[\begin{tikzcd}[column sep=20pt]Y\arrow{r}{f_2}&U_1\arrow{r}{f_1}&U_0\arrow{r}{f_0}&X\end{tikzcd}\]
\emph{weak $2$-almost split} in $\add{T}$ if $(f_1,f_0)$ is a right $2$-almost split sequence in $\add{T}$ and $(f_2,f_1)$ is a left $2$-almost split sequence in $\add{T}$.
\end{defn}

To establish our isomorphisms, we will use \cite[Prop.~3.3]{buanmutation} (see also \cite[Prop.~3.6]{buanmutation}, which is the same result in more categorical language). The following statement specialises this proposition to the case of frozen Jacobian algebras.

\begin{prop}[cf.~{\cite[Prop.~3.3]{buanmutation}}]
\label{birs-iso-prop}
Let $(Q,F,W)$ be an ice quiver with potential, $\cat$ an additive category satisfying \ref{C1} and \ref{C2}, and $\Phi\colon\cpa{\KK}{Q}\to\Endalg{\cat}{T}$ an algebra homomorphism. Write $T_i=\Phi(e_i)(T)$. Then the following are equivalent:
\begin{enumerate}[label=(\roman*)]
\item $\Phi$ induces an isomorphism $\frjac{Q}{F}{W}\isoto\Endalg{\cat}{T}$,
\item for every $i\in Q_0$, the complex
\begin{equation}
\label{2ras}
\begin{tikzcd}[column sep=40pt]
\displaystyle\bigdsum_{\substack{b\in Q_1^\mut\\\head{b}=i}}T_{\tail{b}}\arrow{r}{\Phi\rightder{a}{\der{b}{W}}}&\displaystyle\bigdsum_{\substack{a\in Q_1\\\tail{a}=i}}T_{\head{a}}\arrow{r}{\Phi a}&T_i
\end{tikzcd}
\end{equation}
is right $2$-almost split in $\add{T}$, and
\item for every $i\in Q_0$, the complex
\begin{equation}
\label{2las}
\begin{tikzcd}[column sep=40pt]
T_i\arrow{r}{\Phi b}&\displaystyle\bigdsum_{\substack{b\in Q_1\\\head{b}=i}}T_{\tail{b}}\arrow{r}{\Phi\leftder{b}{\der{a}{W}}}&\displaystyle\bigdsum_{\substack{a\in Q_1^\mut\\\tail{a}=i}}T_{\head{a}}
\end{tikzcd}
\end{equation}
is left $2$-almost split in $\add{T}$.
\end{enumerate}
\end{prop}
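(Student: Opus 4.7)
The plan is to deduce this result from \cite[Prop.~3.3]{buanmutation} (cf.\ also \cite[Prop.~3.6]{buanmutation}), applied with the finite set of basic relations $R = \{\der{b}{W} : b \in Q_1 \setminus F_1\}$ that generates the frozen Jacobian ideal. Each $\der{b}{W}$ is, by construction, a linear combination of paths in $\cpa{\KK}{Q}$ from $\head{b}$ to $\tail{b}$, so is basic in the sense required there. The BIRS criterion yields a characterisation of (i) in terms of certain complexes in $\add T$ being right 2-almost split, and the work is to identify these with \eqref{2ras}.

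The right almost split portion of condition (ii) concerns the map $\Phi a\colon \bigdsum_{a\colon \tail{a} = i} T_{\head{a}} \to T_i$. This encodes the surjectivity of $\Phi$: by the Krull--Schmidt hypothesis \ref{C1} together with \ref{A1} and \ref{A2}, surjectivity of $\Phi$ onto $\Endalg{\cat}{T}$ is equivalent to the maps $\Phi(a)$ jointly surjecting onto $\catrad_\cat(T, T_i)$ for each indecomposable summand $T_i$. This step is independent of the frozen structure and proceeds exactly as in \cite{buanmutation}.

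The pseudo-kernel portion is where the frozen structure enters. For each $b \in Q_1 \setminus F_1$ with $\head{b} = i$, the cyclic derivative admits the canonical factorisation
\[\der{b}{W} = \sum_{a \in Q_1,\ \tail{a} = i} \rightder{a}{\der{b}{W}} \cdot a,\]
obtained by grouping the paths in $\der{b}{W}$ according to their rightmost arrow. Applying $\Phi$, the identity $\Phi(\der{b}{W}) = 0$ in $\Endalg{\cat}{T}$ becomes $\sum_a \Phi(a) \circ \Phi(\rightder{a}{\der{b}{W}}) = 0$, which is precisely the statement that the column of maps $(\Phi(\rightder{a}{\der{b}{W}}))_a$ lies in the kernel of $\Phi a$. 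The pseudo-kernel condition asserts that these are all the relations between the $\Phi(a)$, which is the content of BIRS's criterion applied to our $R$. The equivalence (i)~$\iff$~(iii) follows by the dual argument, using the dual decomposition $\der{a}{W} = \sum_{c\colon \head{c} = \tail{a}} c \cdot \leftder{c}{\der{a}{W}}$ together with the contravariant version of BIRS's criterion obtained by replacing $\Hom_\cat(T,-)$ throughout by $\Hom_\cat(-,T)$.

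The main obstacle is the careful bookkeeping of which arrows appear in which position of each complex: the middle term of \eqref{2ras} sums over all arrows with $\tail{a} = i$ (frozen and unfrozen alike) because it encodes a right $\add T$-approximation of $T_i$, while the left term sums only over unfrozen $b$ because only these contribute cyclic derivatives to the Jacobian ideal of $\frjac{Q}{F}{W}$. Once this asymmetric treatment of $F_1$ is respected on both sides of the complex, the argument of \cite[Prop.~3.3]{buanmutation} goes through essentially verbatim, the necessary modifications being purely indexical rather than homological.
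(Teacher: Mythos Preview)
Your proposal is correct and follows exactly the paper's approach: the paper states this proposition without proof, presenting it as a direct specialisation of \cite[Prop.~3.3]{buanmutation} (see also \cite[Prop.~3.6]{buanmutation}) to the set of basic relations $R=\{\der{b}W:b\in Q_1\setminus F_1\}$ generating the frozen Jacobian ideal. Your explicit bookkeeping---that the outer term of \eqref{2ras} is indexed only by unfrozen arrows because only these contribute relations, while the middle term is indexed by all arrows since it encodes the approximation---is precisely the content of this specialisation, spelled out in more detail than the paper itself provides.
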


\begin{rem}
If $i$ is a mutable vertex, then the sequences \eqref{2ras} and \eqref{2las} glue together into a weak $2$-almost split sequence in $\add{T}$ with both outer terms given by $T_i$; see \cite[Lem.~4.1]{buanmutation} for the equality
\[\rightder{a}{\der{b}{W}}=\leftder{b}{\der{a}{W}}.\]
Thus in the context of \cite[\S5]{buanmutation}, which deals with ordinary Jacobian algebras, it is both possible and convenient to phrase assumptions and conclusions in terms of the existence of such weak $2$-almost split sequences, even though one then proves more than is strictly necessary to obtain the mutation results. Since this symmetry breaks down at frozen vertices, we must make a choice, and we choose to use right $2$-almost split sequences in these cases. Indeed, it is this breaking of symmetry that results in the main differences between our arguments and those of \cite{buanmutation}; we have to pick out which of the two dual arguments provided by loc.\ cit.\ applies at each step of our proof.
\end{rem}

Under the notation and assumptions of \Cref{birs-iso-prop}, let $k\in Q_0^\mut$ be a mutable vertex. Let $T_k^*\in\cat$ be an indecomposable object not in $\add{T}$, and write $\mu_kT=T/T_k\dsum T_k^*$. We make the following assumptions, labelled for consistency with the corresponding assumptions of \cite[\S5.2]{buanmutation}. Our assumptions differ from these only by additional conditions at frozen vertices in \ref{O} and \ref{IV}, and conventions on composing maps.

\begin{enumerate}[label=(\Roman*)]
\myitem[(O)]\label{O} The map $\Phi$ induces an isomorphism $\frjac{Q}{F}{W}\isoto\Endalg{\cat}{T}$. By \Cref{birs-iso-prop}, this condition may be phrased equivalently as follows: for every $i\in Q_0^\mut$, the complex
\[\begin{tikzcd}[column sep=40pt]
T_i\arrow{r}{\Phi b}&\displaystyle\bigdsum_{\substack{b\in Q_1\\\head{b}=i}}T_{\tail{b}}\arrow{r}{\Phi\rightder{a}{\der{b}{W}}}&\displaystyle\bigdsum_{\substack{a\in Q_1\\\tail{a}=i}}T_{\head{a}}\arrow{r}{\Phi a}&T_i
\end{tikzcd}\]
is a weak $2$-almost split sequence in $\add{T}$, which we abbreviate to
\[\begin{tikzcd}[column sep=20pt]
T_i\arrow{r}{f_{i2}}&U_{i1}\arrow{r}{f_{i1}}&U_{i0}\arrow{r}{f_{i0}}&T_i,
\end{tikzcd}\]
and for each $i\in F_0$, the complex
\[\begin{tikzcd}[column sep=40pt]
\displaystyle\bigdsum_{\substack{b\in Q_1^m\\\head{b}=i}}T_{\tail{b}}\arrow{r}{\Phi\rightder{a}{\der{b}{W}}}&\displaystyle\bigdsum_{\substack{a\in Q_1\\\tail{a}=i}}T_{\head{a}}\arrow{r}{\Phi a}&T_i
\end{tikzcd}\]
is a right $2$-almost split sequence in $\add{T}$, which we abbreviate to
\[\begin{tikzcd}[column sep=20pt]
U_{i1}\arrow{r}{f_{i1}}&U_{i0}\arrow{r}{f_{i0}}&T_i.
\end{tikzcd}\]
\setcounter{enumi}{0}
\item\label{I} There exist complexes
\[
\begin{tikzcd}[column sep=20pt]
T_k\arrow{r}{f_{k2}}&U_{k1}\arrow{r}{h_k}&T_k^*,\end{tikzcd}\qquad
\begin{tikzcd}[column sep=20pt]T_k^*\arrow{r}{g_k}&U_{k0}\arrow{r}{f_{k0}}&T_k
\end{tikzcd}
\]
in $\cat$ such that $f_{k1}=g_kh_k$.
\item\label{II} The complex
\[\begin{tikzcd}[column sep=35pt]
T_k^*\arrow{r}{g_k}&U_{k0}\arrow{r}{f_{k0}f_{k2}}&U_{k1}\arrow{r}{h_k}&T_k^*
\end{tikzcd}\]
is a weak $2$-almost split sequence in $\add(\mu_kT)$.
\item\label{III} The sequences
\[\begin{tikzcd}[column sep=30pt,row sep=3pt]
\Hom_\cat(T_k^*,T_k^*)\arrow{r}{h_k}&\Hom_\cat(U_{k1},T_k^*)\arrow{r}{f_{k2}}&\Hom_\cat(T_k,T_k^*),\\
\Hom_\cat(T_k^*,T_k^*)\arrow{r}{g_k}&\Hom_\cat(T_k^*,U_{k0})\arrow{r}{f_{k0}}&\Hom_\cat(T_k^*,T_k),
\end{tikzcd}\]
obtained from those of \ref{I} by applying $\Hom_\cat(-,T_k^*)$ and $\Hom_\cat(T_k^*,-)$ respectively, are exact.
\item\label{IV} For all $i\in Q_0$, we have $T_k\notin(\add{U_{i1}})\cap(\add{U_{i0}})$; equivalently there are no $2$-cycles of $Q$ incident with $k$. For $i\in Q_0^\mut$ the sequences
\[\begin{tikzcd}[column sep=30pt,row sep=3pt]
\Hom_\cat(T_k^*,U_{i1})\arrow{r}{f_{i1}}&\Hom_\cat(T_k^*,U_{i0})\arrow{r}{f_{i0}}&\Hom_\cat(T_k^*,T_i),\\
\Hom_\cat(U_{i0},T_k^*)\arrow{r}{f_{i1}}&\Hom_\cat(U_{i1},T_k^*)\arrow{r}{f_{i2}}&\Hom_\cat(T_i,T_k^*),
\end{tikzcd}\]
obtained by applying $\Hom_\cat(T_k^*,-)$ and $\Hom_\cat(-,T_k^*)$ respectively to the weak $2$-almost split sequence from \ref{O}, are exact. For each $i\in F_0$, we have an exact sequence
\[\begin{tikzcd}[column sep=30pt,row sep=3pt]
\Hom_\cat(T_k^*,U_{i1})\arrow{r}{f_{i1}}&\Hom_\cat(T_k^*,U_{i0})\arrow{r}{f_{i0}}&\Hom_\cat(T_k^*,T_i),
\end{tikzcd}\]
obtained by applying $\Hom_\cat(T_k^*,-)$ to the right $2$-almost split sequence from \ref{O}.
\end{enumerate}

\begin{lem}
\label{assumption-knitting}
Let $\frobcat$ be a stably $2$-Calabi--Yau Frobenius category satisfying \ref{C1} and \ref{C2}, let $(Q,F,W)$ be an ice quiver with potential, and let $T\in\frobcat$ be a cluster-tilting object. Let $\Phi\colon\cpa{\KK}{Q}\to\Endalg{\frobcat}{T}$ be an algebra homomorphism inducing an isomorphism
\[\Phi\colon\frjac{Q}{F}{W}\isoto\Endalg{\frobcat}{T}.\]
If $k\in Q_0^\mut$ is not incident with any loops or $2$-cycles, and $\Phi(e_k)(T)$ is not projective, then there exists $T_k^*\not\in\add{T}$ such that $\Phi$, $T$ and $T_k^*$ satisfy the assumptions \ref{O}--\ref{IV}.
\end{lem}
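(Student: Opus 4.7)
The plan is to construct $T_k^*$ by applying Iyama--Yoshino's mutation theorem (\Cref{iyamamutation}) to $T$, and then to verify \ref{I}--\ref{IV} by matching the resulting exchange sequences with the weak $2$-almost split sequences provided by \ref{O}. Since the Gabriel quiver of $\Endalg{\frobcat}{T}$ is obtained from $Q$ by deleting arrows, the hypothesis that $k$ is incident with no loops or $2$-cycles in $Q$ passes to the Gabriel quiver; combined with the non-projectivity of $T_k=\Phi(e_k)(T)$, this allows \Cref{iyamamutation} to produce an indecomposable $T_k^*$ together with short exact sequences
\[0\to T_k\xrightarrow{\alpha}X\xrightarrow{\pi}T_k^*\to 0,\qquad 0\to T_k^*\xrightarrow{\iota}Y\xrightarrow{\beta}T_k\to 0\]
in which $\alpha$ and $\beta$ are the minimal left and right $\add(T/T_k)$-approximations respectively, and for which $\mu_kT=(T/T_k)\oplus T_k^*$ is again cluster-tilting. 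That $T_k^*\notin\add T$ follows since $T_k^*\iso T_j$ for some $j$ would yield $\Ext^1_\frobcat(T_j,T_k)\ne0$ from the first sequence, contradicting the cluster-tilting property of $T$.

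I would then replace $(Q,F,W)$ by its reduction via \Cref{thm:reduction}, which preserves the isomorphism class of $\frjac{Q}{F}{W}$ and only removes arrows, hence creates no loops or $2$-cycles at $k$. Now $Q$ coincides with the Gabriel quiver of $\Endalg{\frobcat}{T}$ by \Cref{r:gabriel}, each $\Phi b$ is irreducible, and the map $f_{k2}\colon T_k\to U_{k1}$ from \ref{O} is a minimal left $\add(T/T_k)$-approximation (noting $U_{k1}\in\add(T/T_k)$ by the absence of loops at $k$). Thus $f_{k2}$ coincides with $\alpha$ up to isomorphism, and in particular $U_{k1}\iso X$; dually, $f_{k0}\iso\beta$ and $U_{k0}\iso Y$. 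Setting $g_k:=\iota$ and $h_k:=\pi$, the identity $f_{k1}=g_kh_k$ is forced by $f_{k0}f_{k1}=0$ together with the universal property of $\iota=\ker\beta$, verifying \ref{I}.

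The remaining conditions are then homological bookkeeping. For \ref{II}, the argument used for \ref{I} applies to the cluster-tilting object $\mu_kT$ at its non-projective indecomposable summand $T_k^*$: the two exchange sequences above, viewed from the perspective of $T_k^*$, supply $\iota$ and $\pi$ as the minimal left and right $\add(\mu_kT/T_k^*)$-approximations of $T_k^*$, with middle map $f_{k0}f_{k2}$. For \ref{III}, the required exact sequences are initial segments of the Hom--Ext long exact sequences obtained by applying $\Hom_\frobcat(-,T_k^*)$ to the first exchange sequence and $\Hom_\frobcat(T_k^*,-)$ to the second. For \ref{IV}, the condition $T_k\notin(\add U_{i1})\cap(\add U_{i0})$ is equivalent to the absence of a $2$-cycle through $k$ and $i$, which is part of the hypothesis; the exactness statements follow by applying $\Hom_\frobcat(T_k^*,-)$ and $\Hom_\frobcat(-,T_k^*)$ to the weak $2$-almost split sequences of \ref{O} at vertices $i\ne k$, using $\Ext^1_\frobcat(T_k^*,T/T_k)=0=\Ext^1_\frobcat(T/T_k,T_k^*)$ from the cluster-tilting property of $\mu_kT$ to control the long-exact-sequence tails.

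The main technical point is the matching in \ref{I}: identifying $f_{k2}$ and $f_{k0}$ with the Iyama--Yoshino minimal approximations requires that $Q$ be the Gabriel quiver, which is achieved by passing to the reduction of $(Q,F,W)$. Once this identification is in place, the verifications of \ref{II}--\ref{IV} reduce to standard long-exact-sequence manipulations using the two exchange sequences and the cluster-tilting properties of $T$ and $\mu_kT$.
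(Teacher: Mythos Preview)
Your overall plan---take $T_k^*$ from Iyama--Yoshino mutation and then match the exchange sequences with the weak $2$-almost split data from \ref{O}---is exactly what the paper does (deferring the mutable-vertex verifications to \cite[Lem.~5.7]{buanmutation} and filling in only the frozen-vertex parts of \ref{O} and \ref{IV}). Two points, however, need attention.

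First, the reduction step is both unnecessary and slightly misstated. You do not need $(Q,F,W)$ to be reduced for $f_{k2}$ to be a \emph{minimal} left $\add(T/T_k)$-approximation: since $k$ carries no $2$-cycle, no nontrivial linear combination of arrows with head $k$ can appear as the degree-one part of any relation $\partial_cW$ (such degree-one parts come only from $2$-cycle terms of $W$ through $c$), so the components $\Phi b$ of $f_{k2}$ are automatically linearly independent in $\catrad/\catrad^2$. Moreover, reduction does not ``only remove arrows''---the procedure in the proof of \Cref{thm:reduction} also deletes a frozen arrow and freezes its partner in each half-frozen $2$-cycle---so invoking it as a harmless simplification requires more care than you give it. (It happens that arrows at $k$ are untouched, but the sequences in \ref{IV} live at \emph{other} vertices, and the lemma is a statement about the given $(Q,F,W)$ and $\Phi$.)

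Second, your argument for \ref{IV} has a genuine gap. The (right, or weak) $2$-almost split sequences at $i\ne k$ are not short exact sequences in $\frobcat$, so applying $\Hom_\frobcat(T_k^*,-)$ to them does not produce a long exact sequence, and there are no ``tails'' to control by the vanishing of $\Ext^1_\frobcat(T_k^*,T/T_k)$. The paper's argument runs the other way: one applies $\Hom_\frobcat(-,X)$, for $X\in\{U_{i1},U_{i0},T_i\}$, to the genuine short exact sequence $0\to T_k^*\xrightarrow{g_k}U_{k0}\xrightarrow{f_{k0}}T_k\to 0$ coming from \ref{I}, obtaining exact columns since $\Ext^1_\frobcat(T_k,X)=0$ for $X\in\add T$. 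The two upper rows of the resulting $3\times3$ diagram are exact by right $2$-almost splitness in $\add T$ (together with $T_k\not\cong T_i$, so that $\catrad_\frobcat(T_k,T_i)=\Hom_\frobcat(T_k,T_i)$), and exactness of the bottom row---precisely the sequence required by \ref{IV}---then drops out of the long exact cohomology sequence of this short exact sequence of complexes. Your ingredients are the right ones, but they must be assembled via this diagram chase rather than by applying $\Hom$ directly to the $2$-almost split complex.
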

\begin{proof}
By the assumptions on $\Phi$, we have that $T_k$ is an indecomposable non-projective summand of the cluster-tilting object $T$. Since $\frobcat$ is stably $2$-Calabi--Yau and $Q$ has no loops or $2$-cycles at $k$, we may take $T_k^*$ as in \Cref{iyamamutation}. For this choice of $T_k^*$, most of our desired statements are proved in \cite[Lem.~5.7]{buanmutation}. Note in particular that the complexes in \ref{I} are in fact the short exact sequences from \Cref{iyamamutation}; we will use this below. It remains to check the statements of \ref{O} and \ref{IV} dealing with frozen vertices.

The existence of the required right $2$-almost split sequence in \ref{O} follows from the statement (i)$\implies$(ii) of \Cref{birs-iso-prop}. Since there are no $2$-cycles of $Q$ incident with $k$, the statement that $T_k\notin(\add{U_{i1}})\cap(\add{U_{i0}})$ holds when $i$ is frozen exactly as when $i$ is unfrozen. For the remaining statement in \ref{IV}, consider the diagram
\begin{equation}
\label{IV-frozen-diag}
\begin{tikzcd}[column sep=20pt]
0\arrow{d}&0\arrow{d}&0\arrow{d}\\
\Hom_\frobcat(T_k,U_{i1})\arrow{r}\arrow{d}&\Hom_\frobcat(T_k,U_{i0})\arrow{r}\arrow{d}&\Hom_\frobcat(T_k,T_i)\arrow{r}\arrow{d}&0\\
\Hom_\frobcat(U_{k0},U_{i1})\arrow{r}\arrow{d}&\Hom_\frobcat(U_{k0},U_{i0})\arrow{r}\arrow{d}&\Hom_\frobcat(U_{k0},T_i)\arrow{d}\\
\Hom_\frobcat(T_k^*,U_{i1})\arrow{r}\arrow{d}&\Hom_\frobcat(T_k^*,U_{i0})\arrow{r}\arrow{d}&\Hom_\frobcat(T_k^*,T_i)\arrow{d}\\
0&0&0
\end{tikzcd}
\end{equation}
in which the lowest non-zero row is the sequence we wish to prove is exact. The columns are obtained by applying $\Hom_\frobcat(-,X)$ to the short exact sequence
\[\begin{tikzcd}[column sep=20pt]
0\arrow{r}&T_k^*\arrow{r}{g_k}&U_{k0}\arrow{r}{f_{k0}}&T_k\arrow{r}&0
\end{tikzcd}\]
for various $X\in\add{T}$; since $T$ is cluster-tilting, we have $\Ext^1_\frobcat(T_k,X)=0$ in each case, and so these columns are short exact sequences. The rows are obtained by applying $\Hom_\frobcat(Y,-)$ to the complex
\[\begin{tikzcd}[column sep=20pt]
U_{i1}\arrow{r}{f_{i1}}&U_{i0}\arrow{r}{f_{i0}}&T_i,
\end{tikzcd}\]
which we have already shown is right $2$-almost split in $\add{T}$, for various $Y\in\frobcat$. In the case of the first two rows, we even take $Y\in\add{T}$; it then follows immediately from the definition of right $2$-almost splitness that the second row is exact. Exactness of the first row follows similarly, using that $T_k\not\cong T_i$ to see that
\[\Hom_\frobcat(T_k,T_i)=\catrad_\frobcat(T_k,T_i),\]
so that we also have exactness at $\Hom_\frobcat(T_k,T_i)$. Exactness of the lowest row now follows by viewing the diagram \eqref{IV-frozen-diag} as a short exact sequence of chain complexes, and passing to the long-exact sequence in cohomology.
\end{proof}

\begin{eg}
\label{applications}
We pick out three families of Frobenius cluster categories for which some cluster-tilting objects have endomorphism algebra isomorphic to $\frjac{Q}{F}{W}$ for $Q$ without loops and $2$-cycles, so we can apply \Cref{assumption-knitting}. For cases \ref{eg-BIRS} and \ref{eg-JKS}, proofs that the categories are indeed Frobenius cluster categories can be found in \cite[Eg.~3.11--12]{presslandinternally}. In case \ref{eg-P}, this is part of \cite[Thm.~1]{presslandcategorification}. The fact that the relevant quivers have no loops or $2$-cycles is a direct consequence of the explicit construction in each case.
\begin{enumerate}[label=(\roman*)]
\item\label{eg-BIRS} Buan--Iyama--Reiten--Scott \cite{buancluster} associate Frobenius cluster categories $\cat_w$ to elements $w$ of Coxeter groups. Each reduced expression $\mathbf{i}$ for $w$ in terms of simple reflections determines a cluster-tilting object $T_{\mathbf{i}}\in\cat_w$, and Buan--Iyama--Reiten--Smith have shown that $\Endalg{\cat_w}{T_{\mathbf{i}}}$ is isomorphic to a frozen Jacobian algebra determined by $\mathbf{i}$ \cite[Thm.~6.6]{buanmutation}.
\item\label{eg-P} In \cite{presslandcategorification} the author constructs, for any acyclic quiver $Q$, a Frobenius cluster category $\GP(B_Q)$, of Gorenstein projective modules over an Iwanaga--Gorenstein algebra $B_Q$, containing a cluster-tilting object with endomorphism algebra isomorphic to a frozen Jacobian algebra constructed explicitly from $Q$ \cite[Thm.~5.3]{presslandcategorification}.
\item\label{eg-JKS} Jensen--King--Su \cite{jensencategorification} describe a Frobenius cluster category $\CM(B_{k,n})$, consisting of Cohen--Macaulay modules over an algebra $B_{k,n}$, categorifying Scott's cluster algebra structure on the homogeneous coordinate ring of the Grassmannian $G_k^n$ of $k$-dimensional subspaces of $\CC^n$ \cite{scottgrassmannians}. A $(k,n)$-Postnikov diagram $D$ determines both a cluster of Plücker coordinates in the cluster algebra, and a cluster-tilting object $T_D\in\CM(B_{k,n})$. Baur--King--Marsh show that $\Endalg{B_{k,n}}{T_D}$ is isomorphic to a frozen Jacobian algebra determined by $D$ \cite[Thm.~10.3]{baurdimer}. This algebra may also be realised \cite[\S2]{baurdimer} as the dimer algebra of a dimer model in the disk, as in Example~\ref{e:dimers2}.
\end{enumerate}
These categories also satisfy \ref{C1} (this being part of the definition of a Frobenius cluster category) and \ref{C2}, providing that one uses the complete version of $B_{k,n}$ in \ref{eg-JKS} (cf.\ \cite[Rem.~3.3]{jensencategorification}). For example, this makes the endomorphism algebra of any basic object of $\CM(B_{k,n})$ a finitely generated $\powser{\CC}{t}$-module, so that \ref{A1} and \ref{A2} hold.
\end{eg}

Under the notation and assumptions of \Cref{birs-iso-prop}, let $k\in Q_0^\mut$ be a mutable vertex. Choose $T_k^*\notin\add{T}$ and write $\mu_kT=T/T_k\dsum T_k^*$. Assume \ref{O}--\ref{IV}. By \ref{IV}, there are no $2$-cycles in $Q$ incident with $k$, so we may take $(Q',F,W')=\lift{\mu}_k(Q,F,W)$; note that this is not the ordinary mutation of $(Q,F,W)$ since we do not perform the final reduction step, in order to give us better control over the arrows of $Q'$, but it defines the same frozen Jacobian algebra as the ordinary mutation. We now define an algebra homomorphism $\Phi'\colon\cpa{\KK}{Q'}\to\Endalg{\cat}{\mu_kT}$ by choosing a summand of $\mu_kT$ for each $i\in Q_0'=Q_0$ and a map $\Phi'a\colon T_j\to T_i$ for each arrow $a\colon i\to j$ in $Q'_1$, as follows. For $i\ne k$, we associate $T_i$ to $i$, exactly as for $T$, and complete the assignment of summands to vertices by associating the new summand $T_k^*$ to $k$. On arrows, we define $\Phi'$ as follows.
\begin{enumerate}[label=(\roman*)]
\item If $a$ is an arrow common to $Q$ and $Q'$, then we take $\Phi'a=\Phi a$.
\item On arrows $[ab]$ of $Q'$, define $\Phi'[ab]=\Phi b\circ\Phi a$.
\item Recall that by assumption \ref{I} we have maps
\[g_k\colon T_k^*\to\bigdsum_{\substack{a\in Q_1\\\tail{a}=k}}T_{\head{a}},\qquad h_k\colon\bigdsum_{\substack{b\in Q_1\\\head{b}=k}}T_{\tail{b}}\to T_k^*.\]
If $a\in Q_1$ has $\tail{a}=k$, define $\Phi'a^*$ to be the component of $g_k$ indexed by $a$, and if $b\in Q_1$ has $\head{b}=k$, define $\Phi'b^*$ to be the component of $-h_k$ indexed by $b$.
\end{enumerate}
We are now able to state the main result of this section.

\begin{thm}
\label{mutation-thm}
Let $\frobcat$ be a stably $2$-Calabi--Yau Frobenius category satisfying \ref{C1}--\ref{C2}, let $T\in\frobcat$ be a cluster-tilting object, and assume we have an isomorphism $\Phi\colon\frjac{Q}{F}{W}\isoto\Endalg{\frobcat}{T}$ for some ice quiver with potential $(Q,F,W)$. If $k\in Q_0^\mut$ is a mutable vertex of $Q$ not incident with loops or $2$-cycles, and $\Phi(e_k)(T)$ is not projective, then there is an indecomposable object $T_k^*$ in $\frobcat$, unique up to isomorphism, such that $T_k^*\not\iso T_k$ and $\mu_kT=T/T_k\oplus T_k^*$ is cluster-tilting, and an isomorphism $\frjac{\mu_kQ}{\mu_kF}{\mu_kW}\isoto\Endalg{\frobcat}{\mu_kT}$, induced from the map $\Phi'$ constructed above.
\end{thm}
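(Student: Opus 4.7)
The existence of $T_k^*$ and the fact that $\mu_k T$ is again cluster-tilting follow directly from \Cref{iyamamutation}: since $\frobcat$ is stably $2$-Calabi--Yau and $Q$ has no loops or $2$-cycles at $k$, and $T_k=\Phi(e_k)(T)$ is a non-projective indecomposable summand of $T$, Iyama--Yoshino mutation produces an indecomposable $T_k^*\not\iso T_k$, unique up to isomorphism, such that $\mu_k T=T/T_k\dsum T_k^*$ is cluster-tilting. The plan for the algebra isomorphism is to apply \Cref{birs-iso-prop} to $\Phi'\colon\cpa{\KK}{Q'}\to\Endalg{\frobcat}{\mu_k T}$, where $(Q',F,W')=\lift{\mu}_k(Q,F,W)$ is the unreduced mutation; since $\frjac{Q'}{F}{W'}\iso\frjac{\mu_k Q}{\mu_k F}{\mu_k W}$ by \Cref{thm:reduction}, any induced isomorphism from $\Phi'$ yields the desired conclusion. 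By \Cref{assumption-knitting}, the data $(\Phi,T,T_k^*)$ satisfies the hypotheses \ref{O}--\ref{IV}.

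The first substantive step is to verify that $\Phi'$ descends to $\frjac{Q'}{F}{W'}$, i.e., that $\Phi'(\der{\alpha}{W'})=0$ for every unfrozen $\alpha\in Q'_1$. This is a direct calculation using \Cref{rightder-calcs}: for arrows $\alpha$ already present in $Q$ the identity $\rightder{\gamma}{\der{\alpha}{W'}}=\rightder{\gamma}{\der{\alpha}{W}}$ together with $\Phi'[\beta\alpha']=\Phi\beta\circ\Phi\alpha'$ reduces the claim to $\Phi(\der{\alpha}{W})=0$, which holds by assumption on $\Phi$. For the three types of new arrow $\alpha^*$, $\beta^*$ and $[\beta\alpha]$ the claim reduces, via parts \ref{rightder3}--\ref{rightder5} of \Cref{rightder-calcs}, to the relation $f_{k1}=g_k h_k$ from \ref{I} together with vanishing already supplied by $\Phi$.

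The core of the proof is then to verify the right $2$-almost split condition of \Cref{birs-iso-prop} at every vertex $i\in Q'_0=Q_0$. For $i=k$ this is precisely assumption \ref{II}. For $i\ne k$ one rewrites the complex at $i$ coming from $(Q',F,W')$ in terms of the complex at $i$ from $(Q,F,W)$: using \Cref{rightder-calcs} to expand $\der{a}{W'}$ into combinations of $\der{b}{W}$, and the prescriptions defining $\Phi'$ on $[ab]$, $a^*$ and $b^*$, one assembles a bicomplex whose rows are the complexes at $i$ from \ref{O} evaluated on the various summands of $\mu_k T$, and whose columns are built from the short exact sequence $0\to T_k^*\to U_{k0}\to T_k\to 0$ implicit in \ref{I}. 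The required exactness follows by a diagram chase in this bicomplex, invoking the exactness guaranteed by \ref{IV} to control the contribution of the new summand $T_k^*$, and \ref{O} for the summands of $T$.

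The principal obstacle is this final diagram chase at $i\ne k$, which must be carried out separately in the mutable and frozen cases: assumption \ref{O} provides a \emph{weak} $2$-almost split complex at mutable vertices but only a \emph{right} $2$-almost split complex at frozen vertices. The asymmetry of \Cref{birs-iso-prop} in the frozen case is, however, exactly aligned with the weaker form of \ref{IV} available there, so the verification goes through in parallel to the unfrozen argument of \cite[Thm.~5.5]{buanmutation}. Uniqueness of $T_k^*$ is inherited from \Cref{iyamamutation}.
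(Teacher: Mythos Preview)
Your overall strategy matches the paper's: reduce to \Cref{birs-iso-prop} for the unreduced mutation $(Q',F,W')$, invoke \Cref{assumption-knitting} to get \ref{O}--\ref{IV}, read off the case $i=k$ from \ref{II}, and for mutable $i\ne k$ defer to \cite{buanmutation} (the relevant reference is Thm.~5.6 there, following the argument of Lem.~5.10, not Thm.~5.5). The step of checking $\Phi'$ kills the relations is also handled in the paper, but as the verification that the candidate sequences are complexes rather than as a separate descent argument.

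Where your sketch falls short is the treatment of $i\ne k$. The organising case distinction is not ``mutable versus frozen'' but rather the direction of arrows between $i$ and $k$: either there is no arrow $k\to i$, so the new arrows at $i$ are the composites $[ab]$ with $b\colon i\to k$, or there is no arrow $i\to k$, so the new arrows are the reversed $a^*$ with $a\colon k\to i$. The shape of the candidate sequence \eqref{ras'} and hence the verification is different in the two cases. Your ``bicomplex'' picture, with columns coming from the short exact sequence $0\to T_k^*\to U_{k0}\to T_k\to 0$, is essentially the device used in the proof of \Cref{assumption-knitting} to establish \ref{IV}, and a cohomology long exact sequence of that sort does suffice for the pseudo-kernel part in the first case. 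But it does not deliver \emph{right almost splitness}, which is a statement about surjectivity onto $\catrad_\cat(\mu_kT,T_i)$ rather than exactness of a Hom-sequence, and it does not address the second case at all: there $T_k^*$ appears as a summand of the middle term $U'_{i0}$, not as something you can feed into a column.

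Concretely, in the case with arrows $k\to i$ the paper needs a non-formal input: the map $u\colon T_k^{*\ell}\to T_i$ assembled from the $\Phi'a^*$ induces a bijection
\[
\Hom_\cat(T_k^*,T_k^{*\ell})/\catrad_\cat(T_k^*,T_k^{*\ell})\isoto\catrad_\cat(T_k^*,T_i)/\catrad^2_{\add(\mu_kT)}(T_k^*,T_i),
\]
and this is what makes $(\begin{smallmatrix}u&f_{i0}\end{smallmatrix})$ right almost split in $\add(\mu_kT)$ and allows the pseudo-kernel argument to start by showing $p_1\in\catrad_\cat(T',T_k^{*\ell})$. Establishing this bijection uses \ref{II} (that $g_k$ is left almost split in $\add(\mu_kT)$), the absence of $2$-cycles at $k$ to get left minimality of $g_k$, and \ref{C2}; it is not a consequence of the exact sequences in \ref{III}--\ref{IV} and cannot be recovered from your bicomplex chase. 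Without this step the argument in the $k\to i$ case does not close.
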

\begin{proof}
The existence and uniqueness of $T_k^*$ follows from \Cref{iyamamutation} and \Cref{assumption-knitting}, so we need only find the necessary isomorphism. Writing $(Q',F,W')=\lift{\mu}_k(Q,F,W)$, the results of \Cref{s:reduction} give us an isomorphism $\frjac{\mu_kQ}{\mu_kF}{\mu_kW}\isoto\frjac{Q'}{F}{W'}$, so it is enough to show that the map $\Phi'$ constructed above induces an isomorphism $\frjac{Q'}{F}{W'}\isoto\Endalg{\frobcat}{\mu_kT}$.

To do this, we will apply the statement (ii)$\implies$(i) of \Cref{birs-iso-prop}, so it suffices to show, for each $i\in Q_0'$, that the sequence
\begin{equation}
\label{ras'}
\begin{tikzcd}[column sep=45pt]
\displaystyle\bigdsum_{\substack{d\in Q_1'^\mut\\\head{d}=i}}T_{\tail{d}}\arrow{r}{\Phi'\rightder{c}{\der{d}{W'}}}&\displaystyle\bigdsum_{\substack{c\in Q_1'\\\tail{c}=i}}T_{\head{c}}\arrow{r}{\Phi'c}&T_i
\end{tikzcd}
\end{equation}
is right $2$-almost split in $\add(\mu_kT)$. When $i$ is mutable, this follows from \cite[Thm.~5.6]{buanmutation}, so we need only deal with the case $i\in F_0$. Our argument follows closely the proof of \cite[Lem.~5.10]{buanmutation}, using freely computations of the derivatives $\rightder{c}{\der{d}{W'}}$ from \Cref{rightder-calcs}. We treat elements of direct sums as column vectors, with maps acting as matrices from the left; this convention is transposed from that of \cite{buanmutation}.

Let $i\in F_0$. Since $Q$ has no $2$-cycles incident with $k$, either there is no arrow $k\to i$ in $Q$, or there is no arrow $i\to k$ in $Q$. In the first case, the sequence \eqref{ras'} has the form
\begin{equation}
\label{ras'-case1}
\begin{tikzcd}[column sep=50pt,ampersand replacement=\&]
\begin{array}{c}\Big(\displaystyle\bigdsum_{\substack{b\in Q_1\\b\colon i\to k}}T_k^*\Big)\\\dsum\\\Big(\displaystyle\bigdsum_{\substack{d\in Q_1^\mut\\\head{d}=i}}T_{\tail{d}}\Big)\end{array}\arrow{r}{x}\&\begin{array}{c}\Big(\displaystyle\bigdsum_{\substack{a,b\in Q_1\\\tail{a}=k\\b\colon i\to k}}T_{\head{a}}\Big)\\\dsum\\\Big(\displaystyle\bigdsum_{\substack{c\in Q_1\\\head{c}\ne k\\\tail{c}=i}}T_{\head{c}}\Big)\end{array}\arrow{r}{(\begin{smallmatrix}\Phi'[ab]&\Phi'c\end{smallmatrix})}\&T_i,
\end{tikzcd}
\end{equation}
where the direct sums are divided so that the upper portion consists of the contribution from arrows in $Q_1'\setminus Q_1$, and $x$ is given by the matrix
\[x=\begin{pmatrix}\Phi'a^*&\Phi'\rightder{[ab]}{\der{d}{[W]}}\\0&\Phi'\rightder{c}{\der{d}{[W]}}\end{pmatrix}.\]
First we check that this is a complex, by computing
\begin{gather*}
\sum_{\substack{a\in Q_1\\\tail{a}=k}}\Phi'[ab]\Phi'a^*=\Phi bf_{k0}g_k=0,\\
\sum_{\substack{a,b\in Q_1\\\tail{a}=k\\b\colon i\to k}}\Phi'[ab]\Phi'\rightder{[ab]}{\der{d}{[W]}}+\sum_{\substack{c\in Q_1\\\head{c}\ne k\\\tail{c}=i}}\Phi'c\Phi'\rightder{c}{\der{d}{[W]}}=\Phi\der{d}{W}=0
\end{gather*}
for each $b\colon i\to k$ in $Q_1$ and $d\in Q_1^\mut$ with $\head{d}=i$. Let $\ell$ be the number of arrows $i\to k$ in $Q$. Then we have $U_{i0}=T_k^\ell\dsum U_{i0}''$ with $T_k\notin\add{U_{i0}''}$, and the maps $f_{i0}$ and $f_{i1}$ from the right $2$-almost split sequence of \ref{O} decompose as
\[
f_{i0}=\begin{pmatrix}f_{i0}'&f_{i0}''\end{pmatrix}\colon T_k^\ell\dsum U_{i0}''\to T_i,\qquad
f_{i1}=\begin{pmatrix}f_{i1}'\\f_{i1}''\end{pmatrix}\colon U_{i1}\to T_k^\ell\dsum U_{i0}.
\]
We may then rewrite \eqref{ras'-case1} as
\[\begin{tikzcd}[column sep=50pt,ampersand replacement=\&]
\begin{array}{c}T_k^{*\ell}\\\dsum\\U_{i1}\end{array}\arrow{r}{\left(\begin{smallmatrix}g_k^\ell&t\\0&f_{i1}''\end{smallmatrix}\right)}\&\begin{array}{c}U_{k0}^\ell\\\dsum\\U_{i0}''\end{array}\arrow{r}{(\begin{smallmatrix}f_{i0}'f_{k0}^\ell&f_{i0}''\end{smallmatrix})}\&T_i,
\end{tikzcd}\]
where $f_{k0}^\ell t=f_{i1}'$.

Next we show that $(\begin{smallmatrix}f_{i0}'f_{k0}^\ell&f_{i0}''\end{smallmatrix})$ is right almost split in $\add{\mu_k T}$. Let $p\in\catrad_\cat(T/T_k,T_i)$. Since $f_{i0}=(\begin{smallmatrix}f_{i0}'&f_{i0}''\end{smallmatrix})$ is right almost split in $\add{T}$, there exists $(\begin{smallmatrix}p_1\\p_2\end{smallmatrix})\colon T/T_k\to T_k^\ell\dsum U_{i0}''$ such that $p=f_{i0}'p_1+f_{i0}''p_2$. Moreover, since $f_{k0}$ is right almost split in $\add{T}$, there exists $q\colon T/T_k\to U_{k0}^\ell$ such that $p_1=f_{k0}^\ell q$, and so
\[p=f_{i0}'f_{k0}^\ell q+f_{i0}''p_2\]
factors through $(\begin{smallmatrix}f_{i0}'f_{k0}^\ell&f_{i0}''\end{smallmatrix})$ as required. On the other hand, if $p\in\catrad_\cat(T_k^*,T_i)$, then since $g_k$ is left almost split in $\add(\mu_kT)$ there exists $q\colon U_{k0}\to T_i$ such that $p=qg_k$. Since there are no arrows $k\to i$ in $Q$, there are no summands of $U_{k0}$ isomorphic to $T_i$, and so $q\in\catrad_\cat(U_{k0},T_i)$. Since $U_{k0}\in\add(T/T_k)$, we see as above that $q$, and therefore $p$, factors through $(\begin{smallmatrix}f_{i0}'f_{k0}^\ell&f_{i0}''\end{smallmatrix})$.

Now we show that $\left(\begin{smallmatrix}g_k^\ell&t\\0&f_{i1}''\end{smallmatrix}\right)$ is a pseudo-kernel of $(\begin{smallmatrix}f_{i0}'f_{k0}^\ell&f_{i0}''\end{smallmatrix})$ in $\add{\mu_k T}$. By \ref{III} and \ref{IV} we have exact sequences
\begin{equation}
\label{13'}
\begin{tikzcd}[column sep=40pt]
\Hom_\cat(\mu_kT,T_k^*)\arrow{r}{g_k}&\Hom_\cat(\mu_kT,U_{k0})\arrow{r}{f_{k0}}&\Hom_\cat(\mu_kT,T_k)
\end{tikzcd}
\end{equation}
and
\begin{equation}
\label{14'}
\begin{tikzcd}[column sep=37pt,ampersand replacement=\&]
\Hom_\cat(\mu_kT,U_{i1})\arrow{r}{\left(\begin{smallmatrix}f_{i1}'\\f_{i1}''\end{smallmatrix}\right)}\&\Hom_\cat(\mu_kT,T_k^\ell\dsum U_{i0}'')\arrow{r}{(\begin{smallmatrix}f_{i0}'&f_{i0}''\end{smallmatrix})}\&\Hom_\cat(\mu_kT,T_i).
\end{tikzcd}
\end{equation}
Now if $\left(\begin{smallmatrix}p_1\\p_2\end{smallmatrix}\right)\colon\mu_kT\to U_{k0}^\ell\dsum U_{i0}''$ satisfies 
\[0=\begin{pmatrix}f_{i0}'f_{k0}^\ell&f_{i0}''\end{pmatrix}\begin{pmatrix}p_1\\p_2\end{pmatrix}
=\begin{pmatrix}f_{i0}'&f_{i0}''\end{pmatrix}\begin{pmatrix}f_{k0}^\ell&0\\0&1\end{pmatrix}\begin{pmatrix}p_1\\p_2\end{pmatrix},\]
then by exactness of \eqref{14'} there exists $q\colon\mu_kT\to U_{i1}$ such that
\[\begin{pmatrix}f_{i1'}\\f_{i1}''\end{pmatrix}q=\begin{pmatrix}f_{k0}^\ell&0\\0&1\end{pmatrix}\begin{pmatrix}p_1\\p_2\end{pmatrix}.\]
It follows that $f_{k0}^\ell p_1=f_{i1}'q$ and $p_2=f_{i1}''q$. In particular,
\[f_{k0}^\ell(p_1-tq)=f_{i1}'q-f_{i1}q=0,\]
so by exactness of \eqref{13'} there exists $r\colon\mu_kT\to T_k^{*\ell}$ such that $p_1-tq=g_k^\ell r$. It follows that
\[\begin{pmatrix}p_1\\p_2\end{pmatrix}=\begin{pmatrix}g_k^\ell&t\\0&f_{i1}''\end{pmatrix}\begin{pmatrix}r\\q\end{pmatrix},\]
completing the proof that \eqref{ras'} is right $2$-almost split when there are no arrows $k\to i$ in $Q$.

Now assume instead that there are no arrows $i\to k$ in $Q$. In this case, the sequence \eqref{ras'} has the form
\begin{equation}
\label{ras'-case2}
\begin{tikzcd}[column sep=50pt,ampersand replacement=\&]
\begin{array}{c}\Big(\displaystyle\bigdsum_{\substack{a,b\in Q_1\\\head{b}=k\\a\colon k\to i}}T_{\tail{b}}\Big)\\\dsum\\\Big(\displaystyle\bigdsum_{\substack{d\in Q_1^\mut\\\head{d}=i\\\tail{d}\ne k}}T_{\tail{d}}\Big)\end{array}\arrow{r}{y}\&\begin{array}{c}\Big(\displaystyle\bigdsum_{\substack{a\in Q_1\\a\colon k\to i}}T_k^*\Big)\\\dsum\\\Big(\displaystyle\bigdsum_{\substack{c\in Q_1\\\tail{c}=i}}T_{\head{c}}\Big)\end{array}\arrow{r}{(\begin{smallmatrix}\Phi'a^*&\Phi'c\end{smallmatrix})}\&T_i,
\end{tikzcd}
\end{equation}
where
\[y=\begin{pmatrix}\Phi' b^*&0\\\Phi'\rightder{[ab]}{\der{c}{[W]}}&\Phi'\rightder{d}{\der{c}{[W]}}\end{pmatrix}.\]
We see using \ref{I} that this is a complex, since
\begin{align*}
\sum_{\substack{a\in Q_1\\ a\colon k\to i}}\Phi'a^*\Phi'b^*+\sum_{\substack{c\in Q_1\\\tail{c}=i}}\Phi'c\Phi'\rightder{[ab]}{\der{c}{[W]}}&=(-g_kh_k+\Phi\rightder{a}{\der{b}{W}})|_{T_{\tail{b}}}^{T_i}\\
&=(-f_{k1}+f_{k1})|_{T_{\tail{b}}}^{T_i}=0,\\
\sum_{\substack{c\in Q_1\\\tail{c}=i}}\Phi'c\Phi'\rightder{d}{\der{c}{[W]}}&=\Phi\der{d}{W}=0
\end{align*}
for each pair $a,b\in Q_1$ with $\head{b}=k$ and $a\colon k\to i$, and each $d\in Q_1^\mut$ with $\head{d}=i$ and $\tail{d}\ne k$. (The notation after the first equality sign on the first line refers to taking the component $T_{\tail{b}}\to T_i=T_{\head{a}}$ indexed by the pair $(a,b)$.) Let $\ell$ be the number of arrows $k\to i$ in $Q$. Then $U_{i1}=T_k^\ell\dsum U_{i1}''$, where $T_k\notin\add{U_{i1}''}$, and $f_{i1}$ decomposes as
\[f_{i1}=\begin{pmatrix}f_{i1}'&f_{i1}''\end{pmatrix}\colon T_k^\ell\dsum U_{i1}''\to U_{i0}.\]
We may then rewrite \eqref{ras'-case2} as
\[\begin{tikzcd}[column sep=50pt,ampersand replacement=\&]
\begin{array}{c}U_{k1}^\ell\\\dsum\\U_{i1}''\end{array}\arrow{r}{\left(\begin{smallmatrix}-h_k^\ell&0\\s&f_{i1''}\end{smallmatrix}\right)}\&\begin{array}{c}T_k^{*\ell}\\\dsum\\U_{i0}\end{array}\arrow{r}{(\begin{smallmatrix}u&f_{i0}\end{smallmatrix})}\&T_i,
\end{tikzcd}\]
where $sf_{k2}^\ell=f_{i1}'$ and $f_{i0}s=uh_k^\ell$.

Before showing that this sequence is right $2$-almost split in $\add{\mu_kT}$, we establish that the map $u\colon T_k^\ell\to T_i$, whose components are given by $\Phi'a^*$ for the $\ell$ arrows $a\colon k\to i$, induces a bijection
\begin{equation}
\label{rad-bij}
u\colon\Hom_\cat(T_k^*,T_k^{*\ell})/\catrad_\cat(T_k^*,T_k^{*\ell})\isoto\catrad_\cat(T_k^*,T_i)/\catrad^2_{\add(\mu_kT)}(T_k^*,T_i).
\end{equation}
By \ref{C2}, we have $\Hom_\cat(T_k^*,T_k^*)/\catrad_\cat(T_k^*,T_k^*)\iso\KK$, spanned by the class of the identity, so it is sufficient to show that $\catrad_\cat(T_k^*,T_i)/\catrad^2_{\add(\mu_kT)}(T_k^*,T_i)$ has as basis the $\ell$ maps $\Phi'a^*$ for $a\colon k\to i$. These maps are some of the components of $g_k$, which is left almost split in $\add(\mu_kT)$ by \ref{II}, meaning that its components span $\catrad_\cat(T_k^*,U_{k0})/\catrad^2_{\add(\mu_kT)}(T_k^*,U_{k0})$. Since there is no $2$-cycle of $Q$ incident with $k$, we have $f_{k0}f_{k2}\in\catrad_\cat(U_{k0},U_{k1})$, from which it follows that $g_k$ is also left minimal, i.e.\ that its components are linearly independent in $\catrad_\cat(T_k^*,U_{k0})/\catrad^2_{\add(\mu_kT)}(T_k^*,U_{k0})$, hence a basis. Restricting to the summands of $U_{k0}$ isomorphic to $T_i$ then gives the desired result.

We may now show that $(\begin{smallmatrix}u&f_{i0}\end{smallmatrix})$ is right almost split in $\add(\mu_kT)$. Since $f_{i0}$ is right almost split in $\add{T}$ by \ref{O}, for any $p\in\catrad_\cat(T/T_k,T_i)$ there exists $p'\colon T/T_k\to U_{i0}$ such that
\[p=f_{i0}p'=\begin{pmatrix}u&f_{i0}\end{pmatrix}\begin{pmatrix}0\\p'\end{pmatrix}.\]
On the other hand, if $p\in\catrad_\cat(T_k^*,T_i)$, then by \eqref{rad-bij} there exists $p_1\in\Hom_\cat(T_k^*,T_k^{*\ell})$ such that $p-up_1\in\catrad^2_{\add(\mu_kT)}(T_k^*,T_i)$. Since $g_k$ is left almost split in $\add(\mu_kT)$, there exists $q\colon U_{k0}\to T_i$ such that $p-up_1=qg_k$. Now, using again that $f_{i0}$ is right almost split in $\add{T}$, there exists $r\colon U_{k0}\to U_{i0}$ such that $q=f_{i0}r$, so that
\[p=up_1+f_{i0}rg_k=\begin{pmatrix}u&f_{i0}\end{pmatrix}\begin{pmatrix}p_1\\rg_k\end{pmatrix}\]
factors through $(\begin{smallmatrix}u&f_{i0}\end{smallmatrix})$ as required.

Finally, we show that $\left(\begin{smallmatrix}-h_k^\ell&0\\s&f_{i1''}\end{smallmatrix}\right)$ is a pseudo-kernel of $(\begin{smallmatrix}u&f_{i0}\end{smallmatrix})$ in $\add(\mu_kT)$. Assume that $\left(\begin{smallmatrix}p_1\\p_2\end{smallmatrix}\right)\colon T'\to T_k^{*\ell}\dsum U_{i0}$ satisfies
\[\begin{pmatrix}u&f_{i0}\end{pmatrix}\begin{pmatrix}p_1\\p_2\end{pmatrix}=0.\]
To see that $p_1$ factors through $h_k^\ell$, we first show that $p_1\in\catrad_\cat(T',T_k^{*\ell})$, for which it suffices to consider the case $T'=T_k^*$. We then have
\[p_2\in\Hom_\cat(T_k^*,U_{i0})=\catrad_{\add(\mu_kT)}(T_k^*,U_{i0}),\]
and $f_{i0}\in\catrad_{\add{T}}(U_{i0},T_i)=\catrad_{\add(\mu_kT)}(U_{i0},T_i)$ by \ref{O} and the assumption that there are no arrows $i\to k$ in $Q$, so that $U_{i0}\in\add(\mu_kT)$. It follows that
\[up_1=-f_{i0}p_2\in\catrad^2_{\add(\mu_kT)}(T_k^*,T_i),\]
so by \eqref{rad-bij} we have $p_1\in\catrad_\cat(T_k^*,T_k^{*\ell})$ as required. Now since $h_k$ is right almost split in $\add(\mu_kT)$ by \ref{II}, there exists $q\colon T'\to U_{k1}^\ell$ such that $p_1=h_k^\ell q$.

By \ref{III} and \ref{IV} we have an exact sequence
\[\begin{tikzcd}[column sep=40pt,ampersand replacement=\&]
\Hom_\cat(\mu_kT,T_k^\ell\dsum U_{i0}'')\arrow{r}{(\begin{smallmatrix}f_{i1}'&f_{i1}''\end{smallmatrix})}\&\Hom_\cat(\mu_kT,U_{i0})\arrow{r}{f_{i0}}\&\Hom_\cat(\mu_kT,T_i).
\end{tikzcd}\]
Since $f_{i0}(p_2+sq)=f_{i0}p_2+uh_k^\ell q=0$, it follows that there exists $\left(\begin{smallmatrix}q_1\\q_2\end{smallmatrix}\right)\colon T'\to T_k^\ell\dsum U_{i0}$ such that
\[p_2+sq=\begin{pmatrix}f_{i1'}&f_{i1}''\end{pmatrix}\begin{pmatrix}q_1\\q_2\end{pmatrix}.\]
We therefore have
\[p_2=-sq+f_{i1}'q_1+f_{i1}''q_2=s(f_{k2}^\ell q_1-q)+f_{i1}''q_2.\]
It follows that
\[\begin{pmatrix}p_1\\p_2\end{pmatrix}=\begin{pmatrix}-h_k^\ell&0\\s&f_{i1}''\end{pmatrix}\begin{pmatrix}f_{k2}^\ell q_1-q\\q_2\end{pmatrix},\]
so \eqref{ras'} is right $2$-almost split when there are no arrows $i\to k$ in $Q$, completing the proof.
\end{proof}

We summarise our results in the following theorem, establishing compatibility of different notions of mutation in Frobenius cluster categories.

\begin{thm}
\label{main-mutation-thm}
Let $\frobcat$ be a stably $2$-Calabi--Yau Frobenius category satisfying \ref{C1}--\ref{C2}, and assume there is a cluster-tilting object $T\in\frobcat$ such that $\Endalg{\frobcat}{T}\cong\frjac{Q}{F}{W}$, for a reduced ice quiver with potential $(Q,F,W)$. Assume that $Q$ has no loops, and that the Gabriel quiver of $\Endalg{\frobcat}{\hat{T}}$ has no $2$-cycles for any cluster-tilting object $\hat{T}$ mutation equivalent to $T$. Then
\begin{enumerate}[label=(\roman*)]
\item\label{mainthm-mutcomp} if $T'$ is obtained from $T$ by Iyama--Yoshino mutation, which is well-defined since $Q$ has no loops or $2$-cycles, then $\End_\frobcat(T')^{\op}\cong\frjac{\mu_kQ}{\mu_kF}{\mu_kW}$, where $k$ is the vertex of $Q$ corresponding to the mutated summand, and
\item\label{mainthm-quiv} the Gabriel quiver of $\End_\frobcat(T')^{\op}$ is $\mu_kQ=\mu_k^\FZ Q$, the extended Fomin--Zelevinsky mutation of $Q$ at $k$.
\end{enumerate}
Since $(\mu_kQ,\mu_kF,\mu_kW)$ is reduced and $\mu_kQ$ has no loops, these results may be extended inductively to the entire mutation class of $T$.
\end{thm}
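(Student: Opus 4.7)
The plan is to deduce both statements from Theorem~\ref{mutation-thm} and Lemma~\ref{assumption-knitting}, then verify that the hypotheses reproduce under mutation so that induction goes through.

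For part~\ref{mainthm-mutcomp}, Iyama--Yoshino mutation is applied at a non-projective indecomposable summand of $T$, which corresponds under $\Phi$ to a mutable vertex $k \in Q_0 \setminus F_0$, and by construction $T_k = \Phi(e_k)(T)$ is non-projective. The vertex $k$ is not incident with any loops of $Q$ (by hypothesis) or $2$-cycles: since $(Q,F,W)$ is reduced, Remark~\ref{r:gabriel} identifies $Q$ with the Gabriel quiver of $\Endalg{\frobcat}{T}$, and this Gabriel quiver is $2$-cycle free by the standing assumption on the mutation class of $T$. Lemma~\ref{assumption-knitting} therefore produces $T_k^* \not\in \add T$ verifying assumptions \ref{O}--\ref{IV}, and Theorem~\ref{mutation-thm} then yields the isomorphism $\Endalg{\frobcat}{\mu_k T} \cong \frjac{\mu_k Q}{\mu_k F}{\mu_k W}$. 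Uniqueness of $T_k^*$ up to isomorphism, supplied by Theorem~\ref{iyamamutation}, identifies $\mu_k T$ with the Iyama--Yoshino mutation $T'$.

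For part~\ref{mainthm-quiv}, the ice quiver with potential $(\mu_k Q, \mu_k F, \mu_k W)$ is reduced by construction, as Definition~\ref{iqpmutation} incorporates the reduction step. Remark~\ref{r:gabriel} then identifies $\mu_k Q$ with the Gabriel quiver of $\frjac{\mu_k Q}{\mu_k F}{\mu_k W} \cong \Endalg{\frobcat}{T'}$, which by hypothesis has no $2$-cycles. Proposition~\ref{p:no-2-cycles} applied to $(Q,F,W)$ at the vertex $k$ then yields $\mu_k Q = \mu_k^\FZ Q$.

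To extend inductively, one verifies that the hypotheses apply with $T'$ in place of $T$. The ice quiver with potential $(\mu_k Q, \mu_k F, \mu_k W)$ associated to $T'$ via part~\ref{mainthm-mutcomp} is already reduced. For absence of loops in $\mu_k Q$: arrow reversal forbids new loops at $k$, while a loop at $u \ne k$ in $\lift{\mu}_k Q$ could only arise from a pair $\alpha\colon u \to k$, $\beta\colon k \to u$ of arrows in $Q$ forming a $2$-cycle at $k$, which is excluded; the reduction step only deletes arrows, so $\mu_k Q$ inherits the loop-freeness. The hypothesis on absence of $2$-cycles for the mutation class of $T$ transfers verbatim to the (equal) mutation class of $T'$. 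Hence the argument may be iterated to any cluster-tilting object obtained from $T$ by a finite sequence of mutations.

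The substantive content is packaged inside Theorem~\ref{mutation-thm} and Lemma~\ref{assumption-knitting}; what remains is essentially bookkeeping of hypotheses, the most delicate step being to argue non-projectivity of $T_k$ and to ensure that loops are not created during mutation so that the induction is not obstructed.
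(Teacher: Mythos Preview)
Your proof is correct and follows essentially the same approach as the paper's own proof, which simply invokes \Cref{mutation-thm} for part~\ref{mainthm-mutcomp} and combines reducedness with \Cref{p:no-2-cycles} for part~\ref{mainthm-quiv}. You give more detail than the paper does---in particular, your explicit check that $\mu_kQ$ has no loops (via the structure of $\lift{\mu}_k$ and the absence of $2$-cycles at $k$) fills in something the paper merely asserts in the final sentence of the theorem statement---but the underlying argument is the same.
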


\begin{proof}
Statement \ref{mainthm-mutcomp} is just \Cref{mutation-thm}. Since $\mu_k(Q,F,W)$ is reduced by definition, it follows that $\mu_kQ$ is the Gabriel quiver of $\frjac{\mu_kQ}{\mu_kF}{\mu_kW}\iso\End_\frobcat(T')^{\op}$. Since it has no $2$-cycles, it coincides with $\mu_k^\FZ Q$ by \Cref{p:no-2-cycles}.
\end{proof}

To apply this theorem, we need ways of checking that loops and $2$-cycles do not appear in the relevant quivers. In the case of Hom-finite Frobenius cluster categories, this condition is automatic.

\begin{prop}
\label{p:cluster-quivers}
Let $\frobcat$ be a Hom-finite Frobenius cluster category, and $T\in\frobcat$ a cluster-tilting object. Then the Gabriel quiver $Q$ of $\Endalg{\frobcat}{T}$ has no loops or $2$-cycles.\footnote{This proposition is unfortunately false as stated; see Appendix~\ref{s:corrigendum} for several possible remedies. We have left the statement and `proof' as it originally appeared in the published version of the article.}
\end{prop}
\begin{proof}
Let $A=\Endalg{\frobcat}{T}$. Since $\frobcat$ is a Frobenius cluster category, $\gldim{A}\leq 3$, and since $\frobcat$ is Hom-finite, $A$ is a finite-dimensional algebra. Thus by \cite{igusanoloops,lenzingnilpotente}, $Q$ has no loops.

By \cite[Prop.~3.11]{geissrigid} (which uses again that $A$ is a finite-dimensional algebra of finite global dimension), to show that $Q$ has no $2$-cycles it is enough to show that $\Ext^2_A(S,S)=0$ for any simple $A$-module $S$.

So let $i\in Q_0$, and let $S_i$ be the corresponding simple module. By \Cref{assumption-knitting}, we have access to the sequences from assumption \ref{O}. Consider the sequence ending at $T_i$, and apply $\Hom_\frobcat(T,-)$; independent of whether $i$ is mutable or frozen, this gives us an exact sequence
\begin{equation}
\label{eq:cluster-quivers}
\begin{tikzcd}[column sep=20pt]
\Hom_\frobcat(T,U_{i1})\arrow{r}&\Hom_\frobcat(T,U_{i0})\arrow{r}&\catrad_\frobcat(T,T_i)\arrow{r}&0,
\end{tikzcd}
\end{equation}
with the leftmost two terms being projective $A$-modules since $U_{i1},U_{i0}\in\add{T}$.

Since $\frobcat$ is Hom-finite, it satisfies \ref{C2}, and so $S_i=\Hom_\frobcat(T,T_i)/\catrad_\frobcat(T,T_i)$. Combining this fact with the exact sequence \eqref{eq:cluster-quivers} provides the beginning
\[\begin{tikzcd}
P_2\arrow{r}&P_1\arrow{r}&P_0\arrow{r}&S_i\arrow{r}&0
\end{tikzcd}\]
of a projective resolution of $S_i$ with $P_2=\Hom_\frobcat(T,U_{i1})$. Now, recalling the definition of $U_{i1}$ from assumption \ref{O}, and using that $Q$ has no loops, we see that $T_i$ is not a summand of $U_{i1}$, and hence $\Hom_A(P_2,S_i)=0$. It follows that $\Ext^2_A(S_i,S_i)=0$, as required.
\end{proof}

Consider again the Frobenius cluster categories from \Cref{applications}\ref{eg-BIRS}--\ref{eg-P}. Since these categories are Hom-finite, we can combine \Cref{main-mutation-thm} and \Cref{p:cluster-quivers}\footnote{The corrected Proposition~\ref{p:no-2-cycles-induction} should be applied instead at this point. Alternatively, in the case of Example~\ref{applications}(i), Proposition~\ref{p:subobject-closed} may be used.} to see that the endomorphism algebra of any cluster-tilting object within the mutation class of those referred to in \Cref{applications} has endomorphism algebra isomorphic to a frozen Jacobian algebra, and that mutation of cluster-tilting objects commutes with extended Fomin--Zelevinsky mutation of quivers within these classes.

The argument above does not apply to the Grassmannian cluster categories $\CM(B_{k,n})$ of \Cref{applications}\ref{eg-JKS}, since these are Hom-infinite. However, we can replace \Cref{p:cluster-quivers} with the following argument, and then apply \Cref{main-mutation-thm} as before.

\begin{prop}
\label{p:Grassmannians}
Let $\CM(B_{k,n})$ be the Grassmannian cluster category \cite{jensencategorification} for the Grassmannian $G_k^n$ with $n\geq 3$, as in \Cref{applications}\ref{eg-JKS}, and choose a cluster-tilting object $T\in\frobcat$. Then the Gabriel quiver of $\Endalg{\frobcat}{T}$ has no loops or $2$-cycles.
\end{prop}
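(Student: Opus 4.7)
The plan is to split the argument according to whether the vertices involved correspond to non-projective (``mutable'') or projective-injective (``frozen'') summands of $T$. The mutable--mutable case is handled by passing to the Hom-finite stable category, and the cases involving a frozen vertex by induction on mutation distance from a cluster-tilting object associated to a Postnikov diagram, where the endomorphism algebra is explicitly a dimer algebra by Baur--King--Marsh.

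For the mutable--mutable part, the stable category $\stab{\CM}(B_{k,n})$ is Hom-finite, Krull--Schmidt and $2$-Calabi--Yau by \cite{jensencategorification}, with $\stab{T}$ a cluster-tilting object there. A standard consequence of \Cref{iyamamutation} in the Hom-finite $2$-CY setting is that the Gabriel quiver of $\stabEndalg{\stab{\CM}(B_{k,n})}{\stab{T}}$ has no loops or $2$-cycles. Any morphism between non-projective summands $T_i,T_j$ of $T$ that factors through a projective-injective summand $P$ lies in $\catrad^2_{\CM(B_{k,n})}(T_i,T_j)$, since $T_i\to P$ and $P\to T_j$ are both radical (as $T_i$ and $T_j$ are not summands of $P$). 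Hence the mutable--mutable subquiver of the Gabriel quiver of $\Endalg{\CM(B_{k,n})}{T}$ agrees with the Gabriel quiver of the stable endomorphism algebra, and so has no loops or $2$-cycles.

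For the cases involving a frozen vertex, proceed by induction on the mutation distance from a cluster-tilting object $T_D \in \CM(B_{k,n})$ arising from a Postnikov diagram $D$. By \cite[Thm.~10.3]{baurdimer}, $\Endalg{\CM(B_{k,n})}{T_D}$ is isomorphic to a dimer algebra $\frjac{Q_D}{F_D}{W_D}$ on the disk; for $n\geq 3$ one may choose $D$ so that the underlying bipartite graph has no bivalent internal vertex and no face meeting two half-edges, which translates to $Q_D$ having no loops or $2$-cycles. Since every cluster-tilting object in $\CM(B_{k,n})$ is reached from $T_D$ by a finite sequence of Iyama--Yoshino mutations, the inductive step uses \Cref{mutation-thm} to identify $\Endalg{\CM(B_{k,n})}{\mu_k T}\iso\frjac{\mu_k Q}{\mu_k F}{\mu_k W}$, and \Cref{p:no-2-cycles} to conclude, assuming non-degeneracy of $(Q,F,W)$, that $\mu_k Q=\mu_k^\FZ Q$, which by construction has no loops or $2$-cycles incident with $k$ and leaves those elsewhere unchanged.

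The main obstacle is verifying non-degeneracy (\Cref{d:non-degenerate}) throughout the mutation class. This involves two points: that the reduction step (\Cref{iqpmutation}\ref{QPm4}) cancels all $2$-cycles with unfrozen arrows introduced by \Cref{iqpmutation}\ref{QPm1}--\ref{QPm3}, which for disk dimer algebras corresponds to the known stability of reduced dimer models under Seiberg duality (urban renewal); and that the half-frozen $2$-cycle rule (\Cref{fzmutation}\ref{FZm4}) does not introduce a frozen--frozen $2$-cycle, which follows from planarity of the dimer model on the disk, since two half-edges producing frozen arrows between the same pair of boundary faces but in opposite directions would require the underlying graph to be non-planar.
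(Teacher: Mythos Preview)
Your argument has several genuine gaps. First, the claim that the Gabriel quiver of $\stabEndalg{\stab{\CM}(B_{k,n})}{\stab{T}}$ has no loops or $2$-cycles is \emph{not} a standard consequence of Iyama--Yoshino mutation in an arbitrary Hom-finite $2$-Calabi--Yau triangulated category; there are well-known examples (e.g.\ cluster tubes) where $2$-cycles do occur. You would need an independent reason specific to this category. Second, the proposition is stated for \emph{every} cluster-tilting object $T$, but your induction only reaches those in the mutation class of some $T_D$; that these exhaust all cluster-tilting objects in $\CM(B_{k,n})$ is not established (and is open outside finite type). Third, and most seriously, the inductive step is essentially circular: to conclude that $\mu_kQ=\mu_k^\FZ Q$ via \Cref{p:no-2-cycles} you must already know that $(\mu_kQ,\mu_kF)$ has no $2$-cycles containing unfrozen arrows, which is precisely what you are trying to prove. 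Your appeal to ``stability of reduced dimer models under Seiberg duality'' and planarity is not a proof of non-degeneracy in the sense of \Cref{d:non-degenerate}; in particular, it is not clear that every mutation of a disk dimer algebra is again presented by a disk dimer model, nor that the planarity argument rules out all relevant $2$-cycles after reduction.

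The paper avoids all of this by a direct argument that does not use induction on mutations or connectivity of the exchange graph. It uses the quotient functor $\pi\colon\CM(B_{k,n})\to\Sub{Q_k}$ of Jensen--King--Su, which kills the single indecomposable projective $P_n$; since $\Sub{Q_k}$ is a Hom-finite Frobenius cluster category, \Cref{p:cluster-quivers} applies there and forces any loop or $2$-cycle in the quiver of $\Endalg{B_{k,n}}{T}$ to be incident with the vertex for $P_n$. The cyclic symmetry of $B_{k,n}$ then lets one repeat this with $P_n$ replaced by any other indecomposable projective, and since $n\geq3$ these constraints together exclude all loops and $2$-cycles. This works uniformly for every cluster-tilting object, with no reachability hypothesis.
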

\begin{proof}
To simplify the notation, we abbreviate $B_{k,n}$ to $B$. By \cite[Thm.~4.5]{jensencategorification}, there is an exact functor $\pi\colon\CM(B)\to\Sub{Q_k}$, which is a quotient by the ideal generated by an indecomposable projective $B$-module $P_n$. Here $\Sub{Q_k}$ denotes the exact category of submodules of an injective module $Q_k$ for the preprojective algebra of type $\mathsf{A}_{n-1}$, see \cite[\S3]{geisspartial}, and is a Hom-finite Frobenius cluster category \cite[Eg.~3.11]{presslandinternally} (in fact, it is even one of the categories $\cat_w$ considered in \cite{buancluster}; cf.\ \cite[Lem.~17.2]{geisskacmoody}).

As such, $\pi T$ is a cluster-tilting object in $\Sub{Q_k}$, and $\Endalg{\Sub{Q_k}}{\pi T}$ is obtained from $\Endalg{B}{T}$ as the quotient by an idempotent (that given by projection onto the summand $P_n$). Thus the Gabriel quiver of $\Endalg{\Sub{Q_k}}{\pi_T}$ has no loops or $2$-cycles by \Cref{p:cluster-quivers}\footnote{The corrected Proposition~\ref{p:subobject-closed} should be applied instead at this point.}. It follows that any loops or $2$-cycles in the Gabriel quiver of $\Endalg{B}{T}$ must be incident with the vertex corresponding to $P_n$.

However, because of the cyclic symmetry of the algebra $B$, the same argument applies when replacing $P_n$ by one of the $n-1$ other indecomposable projective $B$-modules, giving another quotient functor $\pi'\colon\CM(B)\to\Sub{Q_k}$ (typically with $\pi'T\not\iso\pi T$). Since $n\geq3$, we may apply the above argument to two more of these quotient functors, and thus also rule out any loops or $2$-cycles in the quiver of $\Endalg{B}{T}$ incident with the vertex corresponding to $P_n$.
\end{proof}

We note for completeness that in the one remaining case of $G_1^2$, the projective line, the algebra $B=B_{1,2}$ is the complete path algebra of a $2$-cycle, and the Grassmannian cluster category is $\proj{B}$, which has the unique cluster-tilting object $B$, with no mutable summands.

\section*{Acknowledgements}
We would like to thank Bethany Marsh for pointing out during her lecture series at ICRA 2016 in Syracuse that compatibility of mutations, in the sense of \Cref{main-mutation-thm}, had not yet been established for the Grassmannian cluster categories of \cite{jensencategorification}, and also for comments on an early version of the corrigendum.

\appendix
\section{Corrigendum}
\label{s:corrigendum}
After the publication of this article, we discovered an error in the proof of Proposition~\ref{p:cluster-quivers}. This proposition states that if $T$ is a cluster-tilting object in a Hom-finite Frobenius cluster category $\frobcat$, then the quiver of $A=\Endalg{\frobcat}{T}$ has no loops or $2$-cycles. The part of the statement concerning $2$-cycles is, however, false, and the error in the claimed proof is the use of Lemma~\ref{assumption-knitting}, which applies only when $A$ is isomorphic to a frozen Jacobian algebra. Examples in which $2$-cycles appear may be constructed in the following way. Given any finite-dimensional algebra $A$ of global dimension at most $3$, the category $\proj{A}$ is a Hom-finite Frobenius cluster category (albeit a highly degenerate one that does not categorify an interesting cluster algebra, since its stable category is zero). Up to additive equivalence, the unique cluster-tilting object of $\proj{A}$ is $A$ itself, with endomorphism algebra $\Endalg{A}{A}\isoto A$, and it is not hard to arrange that $A$ is not a frozen Jacobian algebra, or that its quiver has $2$-cycles. Indeed, an explicit example is given by
\[A=k\bigg(\begin{tikzcd}1\arrow[bend left]{r}{\alpha}&2\arrow[bend left]{l}{\beta}\end{tikzcd}\bigg)/\langle \alpha\beta\rangle\]

The easiest way to correct the statement in a way that still permits its application in the proof of Proposition~\ref{p:Grassmannians} is to add a further assumption, as in the following version, which is essentially due to Buan, Iyama, Reiten and Scott \cite{buancluster}.

\begin{prop}
\label{p:subobject-closed}
Let $\frobcat$ be a Hom-finite Frobenius cluster category equivalent as an exact category to a full subcategory, closed under subobjects, of an abelian category. Then if $T\in\frobcat$ is a cluster-tilting object, the quiver of $A=\Endalg{\frobcat}{T}$ has no loops or $2$-cycles.
\end{prop}
\begin{proof}
Hom-finiteness of $\frobcat$ implies that $A$ is finite dimensional, and it is part of the definition of a Frobenius cluster category that $A$ has finite global dimension. Thus we may obtain the result exactly as in the proof of \cite[Prop.~II.1.11(b)]{buancluster}.
\end{proof}

In the proof of Proposition~\ref{p:Grassmannians}, the relevant Frobenius cluster category is the category $\Sub{Q_k}$ of submodules of direct sums of copies of an injective module $Q_k$ for a preprojective algebra $\Pi$, which is a subobject-closed subcategory of the abelian category $\fgmod{\Pi}$, and so Proposition~\ref{p:subobject-closed} may be applied in place of Proposition~\ref{p:cluster-quivers}. We note that another proof of Proposition~\ref{p:Grassmannians}, using essentially the same strategy, is given in \cite[Prop.~4.2]{jensencategorification2}.

Proposition~\ref{p:subobject-closed} also applies to some of the more general Frobenius cluster categories equivalent to the category $\GP(B)$ of Gorenstein projective modules over an Iwanaga--Gorenstein algebra. By \cite[Thm.~2.7]{iyamafrobenius}, any Hom-finite Frobenius cluster category with finitely many isoclasses of indecomposable projectives is equivalent to such a category, and the Gorenstein dimension of $B$ is bounded above by $3$ (see also \cite[Cor.~3.10]{presslandinternally} for the result in this language). In practice, the Gorenstein dimension of $B$ is often strictly smaller than $3$---if it is either $0$ or $1$ then $\GP(B)$ is a subobject-closed subcategory of $\fgmod{B}$, and so Proposition~\ref{p:subobject-closed} once again applies.

Under the original assumptions of Proposition~\ref{p:cluster-quivers}, we can at least rule out the existence of loops, and of $2$-cycles in the quiver of the stable endomorphism algebra of a cluster-tilting object.

\begin{prop}
\label{p:stable-no-2-cycles}
Let $\frobcat$ be a Hom-finite Frobenius cluster category, and $T\in\frobcat$ a cluster-tilting object. Then the quiver of $A=\Endalg{\frobcat}{T}$ has no loops, and the quiver of the stable endomorphism algebra $\stabEndalg{\frobcat}{T}$ has no loops or $2$-cycles.
\end{prop}
\begin{proof}
As in the setting of Proposition~\ref{p:subobject-closed}, $A$ is a finite dimensional algebra of finite global dimension, so it has no loops by the no loops theorem \cite{lenzingnilpotente,igusanoloops}. In \cite[Prop.~3.11]{geissrigid} (which is also used in the proof of \cite[Prop.~II.1.11(b)]{buancluster} cited above), Geiß, Leclerc and Schröer show that the quiver of $A$ has no $2$-cycles provided $\Ext^2_A(S,S)=0$ for any simple $A$-module $S$. In fact, their proof is `local', and shows that there is no $2$-cycle between vertices $i$ and $j$ provided $\Ext^2_A(S_i,S_i)=0=\Ext^2_A(S_j,S_j)$, where $S_i$ and $S_j$ are the simple $A$-modules supported on these vertices.

If $i$ is a vertex corresponding to a non-projective summand of $T$, then we have
\[\Ext^2_A(S_i,S_i)=\Kdual\Ext^1_A(S_i,S_i)=0,\]
the first equality by \cite[\S5.4]{kellerclustertilted} (see also \cite[Thm.~3.4]{presslandinternally}), and the second since there is no loop at $i$. Hence by \cite[Prop.~3.11]{geissrigid} any $2$-cycle in the quiver of $A$ must pass through a vertex corresponding to a projective summand of $T$. Since the quiver of $\stabEndalg{\frobcat}{T}$ is obtained from that of $A$ by deleting these vertices and their incident arrows, it has no $2$-cycles.
\end{proof}

When making connections to cluster theory, we give the quiver of $A$ the structure of an ice quiver by declaring those vertices corresponding to projective summands of $T$ to be frozen. Since arrows between frozen vertices play no role in constructing a cluster algebra from this ice quiver, for applications to cluster algebras we need only rule out $2$-cycles in the quiver passing through at least one mutable vertex (cf.~\cite[\S II.1]{buancluster}). We do not currently know whether the assumptions of Proposition~\ref{p:stable-no-2-cycles} and Proposition~\ref{p:cluster-quivers} are sufficient for this purpose, and indeed our counterexamples to the original statement from only exhibit $2$-cycles between frozen vertices.

Another purpose of Proposition~\ref{p:cluster-quivers} was to provide sufficient conditions under which Theorem~\ref{main-mutation-thm} could be applied, this theorem explaining when mutations of cluster-tilting objects in a Frobenius cluster category are compatible with mutations of frozen Jacobian algebras, and with (extended) Fomin--Zelevinsky mutations of quivers. In this context, we are given a Frobenius cluster category $\frobcat$ containing a cluster-tilting object $T$ for which $\Endalg{\frobcat}{T}$ is isomorphic to a frozen Jacobian algebra. To be in the setting of Theorem~\ref{main-mutation-thm}, we need to rule out $2$-cycles in the quivers of $\Endalg{\frobcat}{\hat{T}}$ for cluster-tilting objects $\hat{T}$ mutation equivalent to $T$. The next proposition shows that this follows from the other assumptions of the theorem when $\frobcat$ is Hom-finite.

\begin{prop}
\label{p:no-2-cycles-induction}
Let $\frobcat$ be a Hom-finite Frobenius cluster category, and let $T\in\frobcat$ be a cluster-tilting object such that $\Endalg{\frobcat}{T}\iso\frjac{Q}{F}{W}$ for a reduced ice quiver with potential $(Q,F,W)$. If $Q$ has no loops or $2$-cycles, then the quiver of $\Endalg{\frobcat}{\hat{T}}$ has no loops or $2$-cycles for any $\hat{T}$ mutation equivalent to $T$.
\end{prop}
\begin{proof}
The quiver of $\Endalg{\frobcat}{\hat{T}}$ has no loops by Proposition~\ref{p:stable-no-2-cycles}, the assumptions of which are weaker than those of the present statement, and so it is sufficient for us to rule out $2$-cycles.

The quiver $Q$, which is the quiver of $\Endalg{\frobcat}{T}$ since $(Q,F,W)$ is reduced, has no $2$-cycles by assumption. Let $T'$ be a cluster-tilting object obtained from $T$ by a single mutation, say at the summand corresponding to vertex $k\in Q_0$. Writing $(Q',F',W')=\mu_k(Q,F,W)$ for the mutation of $(Q,F,W)$ at this vertex, it follows from Theorem~\ref{mutation-thm} that there is an isomorphism
\[A'=\Endalg{\frobcat}{T'}\iso\frjac{Q'}{F'}{W'},\]
and so in particular $A'$ is isomorphic to a frozen Jacobian algebra. As a result, the argument given in Proposition~\ref{p:cluster-quivers} does in fact apply in this case, and we summarise it here. Since $A'$ is a frozen Jacobian algebra, there is an exact sequence
\begin{equation}
\label{eq:simp-proj-res}
\bigoplus_{\stackrel{\beta\in (Q')_1^\mut}{\head{\beta}=i}}P_{\tail{\beta}}\to\bigoplus_{\stackrel{\alpha\in Q'_1}{\tail{\alpha}=i}}P_{\head{\alpha}}\to P_i\to S_i\to 0
\end{equation}
for any $i\in Q'_0$, where $S_i$ is the simple module at $i$ and $P_j=A'e_j$ is the indecomposable projective module with top $S_j$. We recall also the notation $(Q')_1^\mut$ for the set of unfrozen arrows of $Q'$. Since $(Q',F',W')$ is reduced by the definition of mutation, $Q'$ is the quiver of $A'$, and thus has no loops by Proposition~\ref{p:stable-no-2-cycles}. It follows that $\tail{\beta}\ne i$ whenever $\head{\beta}=i$, and so
\[\Hom_{A'}\Bigg(\bigoplus_{\stackrel{\beta\in (Q')_1^\mut}{\head{\beta}=i}}P_{\tail{\beta}},S_i\Bigg)=0.\]
Since $\Ext^2_{A'}(S_i,S_i)$ is a subquotient of this space, \eqref{eq:simp-proj-res} being the start of a projective resolution of $S_i$, we also have $\Ext^2_{A'}(S_i,S_i)=0$. It follows that the quiver $Q'$ of $A'$ has no $2$-cycles by \cite[Prop.~3.11]{geissrigid}. Applying the argument inductively, we extend this conclusion to the entire mutation class of $T$.
\end{proof}

\begin{rem}
We have taken this opportunity to also correct Definitions~\ref{d:potential} and \ref{d:jac-alg}, in which the ideal generated by commutators was erroneously used in place of the vector subspace spanned by commutators.

Secondly, Chang and Zhang point out \cite[Rem.~2.9]{changquivers} that the ice quivers with potential associated to $(k,n)$-Postnikov diagrams \cite{baurdimer} are not rigid in the sense of Definition~\ref{rigid}. Since this family should be particularly well-behaved (and these ice quivers with potential are non-degenerate by Proposition~\ref{p:Grassmannians} and \cite[Prop.~4.2]{jensencategorification2}), our interpretation of their remark is that the definition of rigidity given here is too strong, and should be replaced by a better notion. However, at this time we do not have an alternative suggestion.
\end{rem}

\defbibheading{bibliography}[\refname]{\section*{#1}}\printbibliography
\end{document}